\newcommand{\R}{\mathbb{R}}
\newcommand{\bfR}{\mathbb{R}}
\newcommand{\N}{\mathbb{N}}
\newcommand{\Omv}{\Omega_p}
\newcommand{\Om}{\Omega}
\newcommand{\chfn}{\mathbbm{1}}
\newcommand{\mcA}{\mathcal{A}}
\newcommand{\mcM}{\mathcal{M}}
\newcommand{\mcS}{\mathcal{S}}
\newcommand{\mcD}{\mathcal{D}}
\newcommand{\mcG}{\mathcal{G}}
\newcommand{\mcH}{\mathcal{H}}
\newcommand{\mcK}{\mathcal{K}}
\newcommand{\mcN}{\mathcal{N}}
\newcommand{\mcP}{\mathcal{P}}
\newcommand{\mcPa}{\mathcal{P}^\alpha}
\newcommand{\mcR}{\mathcal{R}}
\newcommand{\mcW}{\mathcal{W}}
\newcommand{\mcX}{\mathcal{X}}
\newcommand{\mcXa}{\mathcal{X}^\alpha}
\newcommand{\mcV}{\mathcal{V}}
\newcommand{\mcY}{\mathcal{Y}}
\newcommand{\bS}{\mcS}
\newcommand{\mcB}{\mathcal{B}}
\newcommand{\alt}[1]{#1}
\newtheorem{theorem}{Theorem}[section]
\newtheorem*{theorem*}{Theorem}
\newtheorem{lemma}[theorem]{Lemma}
\theoremstyle{definition}
\theoremstyle{remark}
\newtheorem{remark}[theorem]{Remark}
\numberwithin{equation}{section}
\title{Modified Scattering of Solutions to the Relativistic Vlasov-Maxwell System Inside the Light Cone}
\author{Stephen Pankavich}
\address{Department of Applied Mathematics and Statistics, Colorado School of Mines, Golden, CO 80401.}
\email{pankavic@mines.edu}
\author{Jonathan Ben-Artzi}
\address{School of Mathematics, Cardiff University, Cardiff, UK.}
\email{Ben-ArtziJ@cardiff.ac.uk}
\date{\today}
\thanks{The first author was supported in part by US National Science Foundation grants DMS-1911145 and DMS-2107938.
The authors have no competing interests to declare that are relevant to the content of this article..
Data sharing is not applicable to this article as no datasets were generated or analyzed during the current study.}
\begin{document}
\maketitle

\begin{abstract}
We consider the relativistic Vlasov-Maxwell system in three dimensions and study the limiting asymptotic behavior as $t \to \infty$ of solutions launched by small, compactly supported initial data.
In particular, we prove
that such solutions scatter to a modification of the free-streaming asymptotic profile. 
More specifically, we show that the spatial average of the particle distribution function converges to a smooth, compactly-supported limit and establish the precise, self-similar asymptotic behavior of the electric and magnetic fields, as well as, the macroscopic densities and their derivatives in terms of this limiting function.
Upon constructing the limiting fields, a modified $L^\infty$ scattering result for the particle distribution function along the associated trajectories of free transport corrected by the limiting Lorentz force is then obtained.
When the \alt{limiting charge density does not vanish}, our estimates are sharp up to a logarithmic correction. 
However, when \alt{this quantity is identically zero in the limit, the limiting current density and fields may also vanish}, which gives rise to decay rates that are faster than those attributed to the dispersive mechanisms in the system.
\end{abstract}

\section{Introduction}

The motion of a collisionless plasma, namely a completely ionized gas that is sufficiently dilute to neglect collisional effects and sufficiently hot to consider relativistic velocities, is often modeled by the relativistic Vlasov-Maxwell system. Within this system charged particles interact with one another via self-consistent electromagnetic forces induced by the charges. In particular, the plasma consists of $N \in \mathbb{N}$ different species, e.g. ions and electrons, each with associated masses $m_\alpha>0$ and charges $e_\alpha\in\bfR$ for $\alpha = 1, ..., N$. Additionally, each particle species is described in phase space by a distribution function $f^\alpha(t,x,p)$, which represents a number density at time $t \geq 0$, position $x \in \bfR^3$, and momentum $p\in \bfR^3$.
The motion of particles is then governed by the Vlasov equation
\begin{equation}
\label{Vlasov}
\tag{RVM$_1$}
\partial_{t}f^\alpha+v_\alpha (p)\cdot\nabla_{x}f^\alpha + e_\alpha \left ( E + \frac{v_\alpha(p)}{c} \times B \right ) \cdot\nabla_{p}f^\alpha=0,
\end{equation}
where $c$ is the speed of light and the relativistic velocity of the $\alpha$th species is given by
\begin{equation}
\label{Rel_vel}
\tag{RVM$_2$}
v_\alpha(p) = \frac{p}{\sqrt{m_\alpha^2 + |p|^2/c^2}}.
\end{equation}
The distribution function gives rise to charge and current densities, defined by
\begin{equation}
\label{rhoj}
\tag{RVM$_3$}
\rho(t,x) =4\pi \sum_{\alpha=1}^N e_\alpha  \int_{\bfR^3} f^\alpha(t,x,p) \,dp \qquad \mathrm{and} \qquad j(t,x) = 4\pi \sum_{\alpha=1}^N e_\alpha \int_{\bfR^3} v_\alpha(p) f^\alpha(t,x,p)  \ dp,
\end{equation}
and these densities generate electromagnetic fields $E(t,x)$ and $B(t,x)$ according to Maxwell's equations
\begin{equation}
\label{Maxwell}
\tag{RVM$_4$}
\begin{aligned}
& \partial_t E - c\nabla \times B = - j, \hspace{.76cm} \nabla \cdot E = \rho\\
& \partial_t B + c\nabla \times E = 0, \hspace{1cm} \nabla \cdot B = 0.
\end{aligned}
\end{equation}
The equations \eqref{Vlasov}-\eqref{Maxwell}, which we will group together and denote by (\hyperref[Vlasov]{RVM}) are then supplemented by the initial conditions $f(0,x,p) = f_0(x,p)$, $E(0,x) = E_0(x)$ and $B(0,x) = B_0(x)$,
which must further satisfy the compatibility conditions
\begin{equation}
\label{compat}
\alt{\nabla \cdot E_0(x) = \rho(0,x), \qquad \nabla \cdot B_0(x) = 0, \qquad \mathrm{and} \qquad \int_{\bfR^3} \rho(0,x) \ dx = 0.}
\end{equation}
For each species, the particle number is conserved, namely
$$ \iint_{\bfR^3\times\bfR^3} f^\alpha(t,x,p) \ dpdx =  \iint_{\bfR^3\times\bfR^3} f^\alpha_0(x,p) \ dpdx =: \mcM^\alpha$$
for all $t \geq 0$, with the overall net charge of the system given by
$$\mcM_{\mathrm{net}} = \sum_{\alpha = 1}^N e_\alpha \mcM^\alpha \alt{ = 0} $$
\alt{due to the global neutrality assumption within \eqref{compat}.}
Additionally, as our methods will require precise estimates on the growth of the characteristics associated to (\hyperref[Vlasov]{RVM}), we define the functions $\mcXa(t, \tau, x, p)$ and $\mcPa(t, \tau, x, p)$ for all $t, \tau \geq 0$ and $x,p \in \bfR^3$ as solutions of the system
\begin{equation}
\label{char}
\left \{
\begin{aligned}
&\dot{\mcXa}(t, \tau, x, p)= v_\alpha \left ( \mcPa(t, \tau, x, p) \right )\\
&\dot{\mcPa}(t, \tau, x, p)=  K^\alpha \left (t,\mcXa(t, \tau, x, p), \mcPa(t, \tau, x, p) \right)
\end{aligned}
\right.
\end{equation}
with initial conditions
$\mcXa(\tau, \tau, x, p) = x$ and
$\mcPa(\tau, \tau, x, p) = p,$
where the Lorentz force on the $\alpha$th species is defined by
$$K^\alpha(t,x,p) = e_\alpha \left (E(t, x) +\frac{v_\alpha(p)}{c}\times B(t, x) \right ).$$

Though the global-in-time existence of classical solutions to (\hyperref[Vlasov]{RVM}) for arbitrary data remains a crucial unsolved problem in the field, a variety of small data solutions are known to exist, ranging from more classical results \cite{GSchNN, GS, SchRVM} to newer theorems \cite{Bigorgne, Wang} that utilize Fourier or vector field methods to remove the assumption that the particle distribution functions be compactly supported. We also mention \cite{WY}, which utilized methods similar to those of Glassey and Strauss \cite{GS} to establish a global existence result for small initial data without compact support, but without the requirement that the initial electromagnetic fields be small.
We further note that some recent progress has been made on the global existence problem for arbitrary size data, as well, and in particular the development of new continuation criteria that ensure the extension of a local solution \cite{Kunze, LS, LS2, Patel}.
For additional background, we refer the reader to \cite{Glassey} as a general reference concerning (\hyperref[Vlasov]{RVM}) and associated kinetic equations.\\

In addition to a lack of global-in-time well-posedness of large data, classical solutions to (\hyperref[Vlasov]{RVM}), the time asymptotic behavior of small data solutions is also not completely understood. Partial results concerning the asymptotic growth or decay of quantities for (\hyperref[Vlasov]{RVM}) or its classical limit, the Vlasov-Poisson system, are known in some situations, including small data \cite{BD, Bigorgne, HRV, Ionescu, Smulevici}, monocharged and spherically-symmetric data \cite{BCP1, Horst, Pankavich2020}, and lower-dimensional settings \cite{BKR, BMP, GPS, GPS2, GPS4, Sch}.
These results generally provide either time asymptotic growth estimates of characteristics or decay estimates of the electric field and charge density.
One typically expects that, inside the light cone, the fields and macroscopic densities tend to zero as $t \to \infty$ like $t^{-2}$ and $t^{-3}$, respectively, for all smooth solutions of (\hyperref[Vlasov]{RVM}) due to the dispersive properties induced within the system by the transport operator $\partial_t + v_\alpha(p) \cdot \nabla_x$. In fact, it is known that the Cauchy problem does not possess smooth steady states (cf., \cite{GPS5}).
%
That being said, it remains a longstanding open problem to demonstrate that smooth solutions of (\hyperref[Vlasov]{RVM})  satisfy these decay properties or scatter to a profile along the trajectories generated by the (possibly modified) free transport operator as $t \to \infty$. 
Hence, the goal of the current work is to establish the precise large-time behavior of solutions to (\hyperref[Vlasov]{RVM}) and construct small data solutions that display exactly this asymptotic dynamic.\\


%

Prior to submitting the current paper, we were made aware of \alt{a recent and} similar work in this direction due to Bigorgne \cite{Bigorgne}, concerning the asymptotic behavior of small data solutions to (\hyperref[Vlasov]{RVM}). 
Though our results are similar, we only assume $C^2$ data for the particle distribution, we demonstrate improved decay rates in the case of \alt{a limiting charge density that vanishes}, and we do not employ vector field methods to achieve the theorems that follow.
\alt{Instead, we use more classical \emph{a priori} estimates (in the spirit of Glassey-Strauss \cite{GS})}, which significantly shortens the proof and makes the result more accessible to the PDE community, especially for those
who are unfamiliar with some of the geometric and vector field methods utilized in \cite{Bigorgne}.\\

For brevity, we will use the small data solutions previously constructed by Glassey and Strauss as a starting point; however, our methods can be applied to other solutions with compact momentum support, such as the nearly-neutral small-data solutions constructed by Glassey and Schaeffer \cite{GSchNN}.
Additionally, it may be possible to remove the support assumption completely, as some recent studies have done just this for similar kinetic equations \cite{BigorgneRVM, Pausader, Ionescu}.
It must be noted, however, that removal of the compact momentum support assumption for the distribution function would allow for some particles to travel at  speeds arbitrarily close to the speed of light and thus, enable resonant particles to travel along the boundary of the light cone.
The additional decay of the electromagnetic fields and their derivatives inside of the light cone may be lost, and this would create a significant challenge to establishing sharp estimates of the limiting behavior of solutions.\\

As the small data solutions we will construct remain bounded on any finite time interval, we are essentially concerned only with large time estimates, and thus we use the notation
$$A(t) \lesssim B(t)$$
to represent the statement that there is $C > 0$, independent of $t$,
such that
$A(t) \leq CB(t)$
for $t$ sufficiently large.
When a specific constant is desired, for instance when there is $\gamma >0$  such that $|x| \leq \gamma t$ for $t$ sufficiently large, we will include the constant in this notation, namely
\alt{\begin{equation}
\label{xlesssim}
|x| \lesssim \gamma t.
\end{equation}}
Additionally, $C$ will denote a positive constant (independent of the solution) that may depend upon initial data and can change from line to line.
As $B$ represents a magnetic field, throughout we will denote the balls of radius $r >0$ (open and closed, respectively)  by 
$$\Gamma_r =\{ u \in \bfR^3 : |u| < r \} \qquad \mathrm{and} \qquad \overline{\Gamma}_r = \{u \in \bfR^3 : |u| \leq r\}.$$


\subsection{Main Results}
For $t \geq 0$ and $\alpha = 1, ..., N$ define the support of $f^\alpha(t)$ by
$$\bS_{f^\alpha}(t) = \overline{\left \{ (x,p) \in \R^6 : f^\alpha(t,x,p) \neq 0 \right \}}.$$
Next, for every $p \in \bfR^3$ denote the gradient of the velocity function by
$$\mathbb{A}_\alpha(p) := \nabla v_\alpha(p)$$
and its associated inverse determinant by
$$\mcD_\alpha(p) = \left | \det \mathbb{A}_\alpha(p) \right |^{-1}$$
for $\alpha = 1, ..., N$.
Furthermore, for every every $q \in \Gamma_1$ let 
$$v_\alpha^{-1}(q) = \frac{m_\alpha q}{\sqrt{1 - |q|^2/c^2}},$$
be the inverse function of $v_\alpha(p)$
and  
$$\mathbb{B}_\alpha(q) := \nabla v_\alpha^{-1}(q)$$
be its associated gradient for $\alpha = 1, ..., N$.\\

Our main results can now be stated precisely, and, for brevity, we normalize the speed of light $c=1$ throughout.
First, we establish a refined global existence theorem for small initial data that augments the previous result of \cite{GS} by obtaining $C^2$ solutions from $C^2$ initial data, rather than $C^1$ solutions from $C^1$ data.
The improved regularity of solutions will be necessary to construct suitable limits for the electric and magnetic fields in the sequel.

\begin{theorem}
\label{T0}
For any $L > 0$ there exist $\epsilon_0 > 0$ and $\beta > 0$ with the following property.
Let $f^\alpha_0 \in C^2$, $\alpha=1,\dots,N$, be non-negative functions supported on $\overline{\Gamma}_L \times \overline{\Gamma}_L$.
Let $E_0, B_0 \in C^3$ be supported on $\overline{\Gamma}_L$ satisfying the compatibility conditions \eqref{compat}.
If the initial data satisfy
$$\sum_{\alpha = 1}^N \Vert f^\alpha_0 \Vert_{C^2} + \Vert E_0 \Vert_{C^3} + \Vert B_0 \Vert_{C^3} \leq \epsilon_0,$$
then there exists a unique classical solution of (\hyperref[Vlasov]{RVM}) for all $x, p \in \bfR^3$ and $t \geq 0$ such that
$$f^\alpha(t,x,p) = 0 \qquad \mathrm{for} \qquad |p| \geq \beta$$
for all $\alpha = 1, ..., N$, $t\geq 0$, and $x \in \bfR^3$.
Furthermore, 
$|E(t,x)| + |B(t,x)| = 0$ for $|x| > t + L$
and we have the estimates
$$s - |\mcX^\alpha(s,t,x,p)| + 2L \geq \frac{s + L}{2 (1+\beta^2)}$$
for all $s, t \geq 0$,  $x,p \in \bfR^3$
and

$$|E(t,x)| + |B(t,x)| \leq \frac{C\alt{\epsilon_0 }}{(t+1)\left (t - |x | + 2L \right )},$$
$$|\nabla_x E(t,x)| + |\nabla_x B(t,x)| \leq \frac{C\alt{\epsilon_0 }\ln(t+2)}{(t+1)\left (t - |x | + 2L \right )^2},$$
$$|\nabla^2_x E(t,x)| + |\nabla^2_x B(t,x)| \leq \frac{C\alt{\epsilon_0 }\ln(t+2)}{(t+1)\left (t - |x | + 2L \right )^3},$$
$$\Vert \nabla_x f^\alpha(t) \Vert_\infty \leq C \qquad \mathrm{and} \qquad \Vert \nabla_p f^\alpha(t) \Vert_\infty \leq C(1+t),$$
and
$$|\rho(t,x)|  + | j(t,x)|  \leq C\alt{\epsilon_0 }(1+t)^{-3}$$
for some $C>0$ and all $t \geq 0$, $x \in \bfR^3$.
\end{theorem}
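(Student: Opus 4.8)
The plan is to prove Theorem~\ref{T0} by a continuity (bootstrap) argument built on the Glassey--Strauss representation of the electromagnetic field, while upgrading the local existence theory of \cite{GS} so that it also propagates a second derivative. Let $(f^\alpha,E,B)$ be the maximal solution on an interval $[0,T^*)$ produced by the (suitably strengthened) local theory, whose continuation criterion guarantees that the solution extends past any finite time at which the momentum support of the $f^\alpha$ and the norms $\Vert E(t)\Vert_\infty+\Vert B(t)\Vert_\infty$, $\Vert\nabla_x E(t)\Vert_\infty+\Vert\nabla_x B(t)\Vert_\infty$, $\Vert\nabla^2_x E(t)\Vert_\infty+\Vert\nabla^2_x B(t)\Vert_\infty$ remain finite. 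On $[0,T^*)$ I impose the bootstrap hypotheses that $f^\alpha(t,x,p)=0$ for $|p|\geq 2\beta$ and that the three displayed field bounds hold with the constant $C$ doubled, and then show that, if $\epsilon_0$ is small in terms of $L$, these hypotheses self-improve; together with finite speed of propagation (which, since $E_0,B_0,f_0^\alpha$ are supported in $\overline{\Gamma}_L$, immediately yields $|E(t,x)|+|B(t,x)|=0$ for $|x|>t+L$) and the continuation criterion, this forces $T^*=\infty$ and delivers all the estimates.

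Granting the bootstrap hypotheses, the first step is the characteristic analysis. On the momentum support one has $|v_\alpha(p)|\leq\kappa<1$ with $\kappa=\kappa(\beta)$, so any characteristic issuing from $\bS_{f^\alpha}(0)$ obeys $|\mcXa(s,0,x_0,p_0)|\leq|x_0|+\int_0^s|v_\alpha(\mcPa(\tau))|\,d\tau\leq L+\kappa s$; choosing $\beta$ appropriately this yields the interior--cone bound $s-|\mcXa(s,t,x,p)|+2L\geq\frac{s+L}{2(1+\beta^2)}$ for all characteristics through $\bS_{f^\alpha}(t)$. Because the field bounds and this lower bound together give $\Vert E(s,\cdot)\Vert_{L^\infty(\bS_{f^\alpha}(s))}+\Vert B(s,\cdot)\Vert_{L^\infty(\bS_{f^\alpha}(s))}\lesssim\epsilon_0(1+s)^{-2}$, which is integrable in $s$, the characteristic momenta drift by at most $\int_0^\infty|K^\alpha(s,\mcXa,\mcPa)|\,ds\lesssim\epsilon_0$, so the momentum support stays in $\overline{\Gamma}_{L+C\epsilon_0}\subset\overline{\Gamma}_\beta$, improving the support hypothesis. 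Differentiating the Vlasov equation \eqref{Vlasov} gives transport equations for $\nabla_x f^\alpha$ and $\nabla_p f^\alpha$ with sources $-(\nabla_x K^\alpha)\cdot\nabla_p f^\alpha$ and $-\mathbb{A}_\alpha(p)\,\nabla_x f^\alpha-(\nabla_p K^\alpha)\cdot\nabla_p f^\alpha$; evaluated along characteristics in the support, where $\Vert\nabla_x K^\alpha\Vert\lesssim\epsilon_0\ln(s+2)(1+s)^{-3}$ and $\Vert\nabla_p K^\alpha\Vert\lesssim\Vert B(s,\cdot)\Vert_{L^\infty(\bS)}\lesssim\epsilon_0(1+s)^{-2}$ are integrable against $1+s$ and $1$ respectively, a Gr\"onwall/bootstrap argument delivers $\Vert\nabla_x f^\alpha(t)\Vert_\infty\lesssim 1$ and $\Vert\nabla_p f^\alpha(t)\Vert_\infty\lesssim 1+t$.

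Next come the macroscopic densities. Writing $\rho(t,x)=4\pi\sum_\alpha e_\alpha\int f^\alpha(t,x,p)\,dp$, using that $f^\alpha$ is conserved along characteristics, and changing variables from $p$ to $y_0:=\mcXa(0,t,x,p)$ for fixed $x$ --- whose Jacobian is $-\int_0^t\mathbb{A}_\alpha(\mcPa(s))\,\partial_p\mcPa(s)\,ds$, equal to $-t\,\mathbb{A}_\alpha(p)(1+O(\epsilon_0))$ because a Gr\"onwall estimate run \emph{forward} from $s=0$ shows $|\partial_p\mcPa(s)-I|\lesssim\epsilon_0$ uniformly (the dangerous coupling term now carries the weight $s\Vert\nabla_x K^\alpha(s)\Vert\lesssim\epsilon_0\ln(s+2)(1+s)^{-2}$, which is integrable) --- one finds the integrand supported in a $p$-set of measure $\lesssim(1+t)^{-3}$, hence $|\rho(t,x)|+|j(t,x)|\lesssim\epsilon_0(1+t)^{-3}$; the identical computation with $\Vert\nabla_x f^\alpha\Vert_\infty\lesssim1$ yields $|\nabla_x\rho(t,x)|+|\nabla_x j(t,x)|\lesssim\epsilon_0(1+t)^{-3}$. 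Now feed these into the Glassey--Strauss decomposition $E=E_D+E_T+E_S$ (and its magnetic analogue): $E_D$, the contribution of $E_0,B_0$, is supported in $\big||x|-t\big|\leq L$ and bounded by $(\Vert E_0\Vert_{C^1}+\Vert B_0\Vert_{C^1})$ times the claimed decay; $E_T$ carries a kernel $\sim|x-y|^{-2}$ acting on $f^\alpha$ alone (integrable in $p$ against $\Vert\rho(s)\Vert_\infty+\Vert j(s)\Vert_\infty$); and $E_S$ carries a kernel $\sim|x-y|^{-1}$ acting on the free--transport operator applied to $f^\alpha$, i.e.\ on $-K^\alpha\cdot\nabla_p f^\alpha$ --- of pointwise size $\lesssim\epsilon_0(1+s)^{-1}$ on the support by the previous step --- plus a $v_\alpha\cdot\nabla_x f^\alpha$ piece that one integrates by parts onto the kernel (producing a $|x-y|^{-2}$ term and a boundary contribution from $f_0^\alpha$). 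Carrying out the retarded (radial) integration, using that the support of $f^\alpha$ on the sphere $|x-y|=t-s$ restricts $s$ to a set of length $\lesssim t-|x|+2L$, and splitting the resulting $s$-integral according to the relative sizes of $t-s$ and $t-|x|+2L$, one obtains $|E(t,x)|+|B(t,x)|\lesssim\epsilon_0(1+t)^{-1}(t-|x|+2L)^{-1}$, improving the first bound.

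Finally, differentiating the representation once, then twice, in $x$: each derivative either falls on the kernel, costing an extra $(t-s)^{-1}$ --- which, after the $s$--integration, contributes exactly one factor $\ln(t+2)$ --- or is transferred onto $\nabla_x\rho,\nabla_x j$ (already controlled) or onto $E_0,B_0\in C^3$ (which is why three derivatives of the initial fields are required); a careful accounting of the light--cone integrals shows the logarithms do not compound, so that $|\nabla_x E|+|\nabla_x B|\lesssim\epsilon_0\ln(t+2)(1+t)^{-1}(t-|x|+2L)^{-2}$ and $|\nabla^2_x E|+|\nabla^2_x B|\lesssim\epsilon_0\ln(t+2)(1+t)^{-1}(t-|x|+2L)^{-3}$, improving the remaining two field bounds. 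Since every improved estimate carries a factor $\epsilon_0$, choosing $\epsilon_0$ small relative to the accumulated constants closes the bootstrap, and the continuation criterion then yields the global $C^2$ solution with all stated bounds. I expect the main obstacle to be precisely this last stage --- the sharp treatment of $E_S$ despite the linear--in--$s$ growth of $\Vert\nabla_p f^\alpha\Vert_\infty$ (tamed only because $K^\alpha$ is evaluated on the support, where it enjoys the full $(1+s)^{-2}$ decay) and, for the second derivatives, arranging the backward--light--cone integrals so that a single logarithm, rather than a power of $(t-|x|+2L)^{-1}$ or an iterated logarithm, is produced --- together with the bookkeeping needed to thread the three field bounds, the $\nabla f$ bounds, and the density bounds through a single consistent bootstrap.
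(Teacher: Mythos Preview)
Your bootstrap/continuity framework is a legitimate substitute for the paper's iteration scheme, and your treatment of the momentum support, the first derivatives of $f$, and the zeroth- and first-order field bounds essentially reproduces the Glassey--Strauss argument. The genuine gap is in the second-derivative field estimate, and it is not merely bookkeeping.

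When you differentiate the Glassey--Strauss representation of $\partial_{x_k}E$ once more in $x_\ell$, the most singular ($TT$) term already carries the kernel $|x-y|^{-3}$, so you cannot put the extra derivative on the kernel; it must land on $f$, producing $\int a(\omega,v(p))\,\partial_{x_\ell}f(t-|x-y|,y,p)\,dp$ integrated against $|x-y|^{-3}$. Your proposed control of this via ``$|\nabla_x\rho|+|\nabla_xj|\lesssim\epsilon_0(1+t)^{-3}$, from the identical computation with $\|\nabla_x f\|_\infty\lesssim 1$'' gives only $(1+t)^{-3}$ decay for such $p$-integrals, and that is \emph{not enough}: feeding $(1+s)^{-3}$ into the $|x-y|^{-3}$ light-cone integral fails to produce $(t-|x|+2L)^{-3}$. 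What the paper actually uses (Lemma~\ref{Lm}) is the sharper bound
\[
\left|\int m(\omega,v(p))\,\partial_{x_\ell}f(t,y,p)\,dp\right|\lesssim \epsilon_0\,(1+t)^{-4},
\]
i.e.\ one full extra power of decay beyond what the naive volume argument yields. Obtaining this extra $t^{-1}$ is the heart of the proof: it requires passing to the translated distribution $g(t,x,p)=f(t,x+v(p)t,p)$, proving $\|\nabla_p^2 g(t)\|_\infty\lesssim\ln^4(t)$ and $\|\nabla_x^2 g(t)\|_\infty\lesssim 1$ (Lemma~\ref{D2g}, which itself consumes the bootstrap hypothesis on $\nabla_x^2 E,\nabla_x^2 B$), and showing convergence of $\nabla_p F(t,\cdot)$ with $F(t,p)=\int f\,dx$ (Lemma~\ref{DFunif}). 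The near-field ($|x-y|\leq 1/t$) piece of the same term also needs $\|\nabla_x^2 f\|_\infty\lesssim 1$, which is again part of this package and is not available from your first-order Gr\"onwall argument. In short, closing the $\nabla_x^2E,\nabla_x^2B$ bound is not a routine extension of the first-derivative argument; it hinges on a genuinely new ingredient---the $t^{-4}$ decay of moment integrals of $\nabla_x f$---that your sketch does not supply.
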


Next, we establish precise estimates on the large time asymptotic behavior of these small data solutions.

\begin{theorem}
\label{T1}
Under the conditions of Theorem \ref{T0}, the small data solutions satisfy
\begin{enumerate}[(a)]
\item 
For every $\alpha = 1, ..., N$, $\tau \geq 0$ and $(x,p) \in \bS_{f^\alpha}(\tau)$ the limiting function $\mcPa_\infty$ defined by
$$\mcPa_\infty(\tau, x, p) :=  \lim_{t \to \infty} \mcPa(t, \tau, x, p)$$ 
exists and is both $C^2$ and contained in $\Gamma_\beta$.
Additionally, for $\tau \geq 0$ and $(x,p) \in \bS_{f^\alpha}(\tau)$,
$$\left | \mcPa(t, \tau, x, p) - \mcPa_\infty(\tau, x, p)  \right | \lesssim t^{-1}.$$

\item 
For every $\alpha = 1, ..., N$ define
$$\Omv^\alpha = \left \{ \mcPa_\infty(0, x, p) : (x, p) \in \bS_{f^\alpha}(0) \right \}.$$
Then, there exist $F^\alpha_\infty \in C_c^2(\bfR^3)$ supported on $\Om_p^\alpha$ such that the spatial average
$$F^\alpha(t,p) = \int f^\alpha(t,x, p) \ dx$$
satisfies $F^\alpha(t,p) \to F^\alpha_\infty(p)$ uniformly as $t \to \infty$, namely
$$\| F^\alpha(t) - F^\alpha_\infty \|_\infty \lesssim t^{-1}\ln^{5}(t),$$
and for every $\alpha = 1, ..., N$,
$$\int  F^\alpha_\infty(p) \ dp = \mcM^\alpha.$$

\item Due to the compact support of each $F_\infty^\alpha$, define the limiting charge density for $q \in \Gamma_1$ by
$$\rho_\infty(q) = \sum_{\alpha=1}^N e_\alpha \mcD_\alpha\left ( v_\alpha^{-1}(q) \right )F^\alpha_\infty\left ( v_\alpha^{-1}(q) \right )$$
and smoothly extend $\rho_\infty(q) = 0$ for $q \in \bfR^3 \setminus \Gamma_1$.
Similarly, define the limiting current density by
$$ j_\infty(q) = q \rho_\infty(q)$$
for $q \in \bfR^3$.
Then, the charge and current densities have the self-similar 
asymptotic profiles 
\begin{align*}
\sup_{x \in \bfR^3}   \left | t^3 \rho(t,x) - \rho_\infty \left (\frac{x}{t} \right) \right | & \lesssim t^{-1} \ln^{6}(t),\\
\sup_{x \in \bfR^3}  \left | t^3 j(t,x) - j_\infty \left (\frac{x}{t} \right) \right | & \lesssim t^{-1}\ln^{6}(t),
\end{align*}
and $\rho_\infty(q)$ satisfies
\begin{equation}
\label{Pinfmass}
\int \rho_\infty(q) \ dq = \mcM_{\mathrm{net}} \alt{= 0}.
\end{equation}
Furthermore, for every $i = 1,2,3$ the derivatives satisfy
\begin{align*}
\sup_{x \in \bfR^3} \left | t^4 \partial_{x_i} \rho(t,x) - \partial_{q_i}\rho_\infty \left (\frac{x}{t} \right) \right | & \lesssim t^{-1} \ln^{8}(t),\\
\sup_{x \in \bfR^3}  \left | t^4 \partial_{x_i}j(t,x) -  \partial_{q_i}j_\infty\left (\frac{x}{t} \right) \right | & \lesssim t^{-1}\ln^{8}(t),\\
\sup_{x \in \bfR^3}  \left | t^4 \partial_t j^i(t,x) + \left [3j^i_\infty\left (\frac{x}{t} \right) +  \frac{x}{t} \cdot \nabla_qj^i_\infty\left ( \frac{x}{t} \right) \right ]  \right | & \lesssim t^{-1}\ln^{8}(t).
\end{align*}

\item Let $\gamma := \max \left \{ \frac{1}{2}, \frac{2\beta}{\sqrt{1 + 4\beta^2}} \right \} < 1$, and
define the linear operator 
$$\mathcal{L} u := \sum_{i,j=1}^3 \left ( q_i q_j - \delta_{ij} \right ) \partial_{q_i q_j} u + 6q \cdot \nabla_q u + 6u.$$
Further define for any $i = 1,2,3$ the functions $E_\infty^i(q)$ and $B_\infty^i(q)$ to be the unique $C^2$ solutions (cf. \cite{GT}) of the uniformly elliptic boundary-value problems
$$ \mathcal{L} E_\infty^i(q) = -\partial_{q_i}\rho_\infty(q) + 3j^i_\infty(q) +  q \cdot \nabla_qj^i_\infty(q)$$
and
$$ \mathcal{L}B_\infty^i(q) =  \left (\nabla_q \times j_\infty \right)^i (q),$$
respectively, for $|q| < \gamma$ with the boundary conditions
$$E_\infty^i(q) = B_\infty^i(q) = 0 \quad \mathrm{for} \ |q| = \gamma.$$
Then, $E$ and $B$ have the self-similar asymptotic profiles
\begin{align*}
\sup_{|x| \lesssim \gamma t} \left | t^2 E(t,x) - E_\infty \left (\frac{x}{t} \right ) \right | & \lesssim t^{-1}\ln^{8}(t),\\
\sup_{|x| \lesssim \gamma t} \left | t^2 B(t,x) - B_\infty \left (\frac{x}{t} \right ) \right | & \lesssim t^{-1}\ln^{8}(t).
\end{align*}
Additionally, the Lorentz force of the $\alpha$th species satisfies
$$\sup_{\substack{|x| \lesssim \ln(t)\\ |p| \leq \beta}}  \left | t^2 K^\alpha(t, x + v_\alpha(p)t , p) - K^\alpha_\infty \left ( p \right ) \right |  \lesssim t^{-1}\ln^{8}(t)$$
for all $\alpha = 1, ..., N$
where 
$$K^\alpha_\infty(p) = e_\alpha \bigg [E_\infty(v_\alpha(p)) + v_\alpha(p) \times B_\infty(v_\alpha(p) ) \bigg].$$


\item
For every $\alpha = 1, ..., N$
there is $f^\alpha_\infty \in C(\bfR^6)$ 
such that
$$f^\alpha \biggl (t,x +v_\alpha(p)t - \ln(t) \mathbb{A}_\alpha(p) K^\alpha_\infty(p),p \biggr) \to f^\alpha_\infty(x,p)$$
uniformly
as $t \to \infty$, namely we have the convergence estimate
$$\sup_{(x,p) \in \bfR^6} \left | f^\alpha \biggl (t,x +v_\alpha(p)t - \ln(t) \mathbb{A}_\alpha(p) K^\alpha_\infty(p), p \biggr) - f^\alpha_\infty(x,p) \right |  \lesssim t^{-1}\ln^{8}(t).$$
\end{enumerate}
\end{theorem}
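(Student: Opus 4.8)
\emph{Overall strategy and part (a).}
The backbone of every part is one consequence of Theorem~\ref{T0}: along any characteristic the Lorentz force decays quadratically. Combining $|E|+|B|\lesssim\epsilon_0\big((t+1)(t-|x|+2L)\big)^{-1}$ with the lower bound $s-|\mcXa(s,\tau,x,p)|+2L\gtrsim s+1$ gives
\[
\big|K^\alpha\big(s,\mcXa(s,\tau,x,p),\mcPa(s,\tau,x,p)\big)\big|\lesssim\epsilon_0(s+1)^{-2}
\]
uniformly over $s,\tau\ge0$ and $(x,p)$ in the momentum-bounded support. Part~(a) follows at once: integrating $\dot{\mcPa}=K^\alpha(\cdots)$ gives $\mcPa_\infty(\tau,x,p)=p+\int_\tau^\infty K^\alpha(\cdots)\,ds$ and $|\mcPa(t,\tau,x,p)-\mcPa_\infty(\tau,x,p)|\lesssim t^{-1}$; since $\int_\tau^\infty|K^\alpha(\cdots)|\lesssim\epsilon_0$ is small, $\mcPa_\infty$ stays in $\Gamma_\beta$. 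For $C^2$ regularity I differentiate the system \eqref{char} once and twice in $(x,p)$: the relevant Jacobians grow at most polynomially in $t$, while the logarithmically improved bounds on $\nabla_x E,\nabla^2_x E$ (and $B$) make the time-derivatives of $\partial_{(x,p)}\mcPa$ and $\partial^2_{(x,p)}\mcPa$ integrable in $t$, so these converge. The same mechanism, now also using the Lorentz-force limit of~(d), produces a second, modified limit needed later, namely $\mcXa_\infty(\tau,x,p):=\lim_{t\to\infty}\big[\mcXa(t,\tau,x,p)-v_\alpha(\mcPa_\infty)t+\ln(t)\,\mathbb{A}_\alpha(\mcPa_\infty)K^\alpha_\infty(\mcPa_\infty)\big]$, which exists and is $C^2$.

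\emph{Parts (b) and (c): slices and change of variables.}
Writing $f^\alpha(t,x,p)=f_0^\alpha\big(\mcXa(0,t,x,p),\mcPa(0,t,x,p)\big)$ and using $|v_\alpha(\mcPa(s,0,x_0,p_0))-v_\alpha(p)|\lesssim(s+1)^{-1}+t^{-1}$, integration shows the $x$-support of $f^\alpha(t,\cdot,p)$ is a ball of radius $\lesssim\ln t$ about $v_\alpha(p)t$. For~(b) I change variables $x\mapsto\mcXa(0,t,x,p)$ in $F^\alpha(t,p)=\int f^\alpha(t,x,p)\,dx$ (a near-identity map at fixed $p$), obtaining $F^\alpha(t,p)=\int_{\overline{\Gamma}_L}f_0^\alpha(y,\widetilde P(t,y,p))\,\big|\det\partial_y x\big|\,dy$, where both the momentum label $\widetilde P$ and the Jacobian converge; the limit $F^\alpha_\infty$ is the density on momentum space of the pushforward of $f_0^\alpha(x_0,p_0)\,dx_0\,dp_0$ under $(x_0,p_0)\mapsto\mcPa_\infty(0,x_0,p_0)$, hence lies in $C_c^2$ with support $\Omv^\alpha$ and total mass $\mcM^\alpha$, and tracking the logarithms gives the rate $t^{-1}\ln^5 t$. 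For~(c) I change variables $p\mapsto\mcXa(0,t,x,p)$ in $\rho(t,x)=\sum_\alpha e_\alpha\int f^\alpha(t,x,p)\,dp$: the Jacobian equals $t^3$ times a quantity that converges as $t\to\infty$ (its leading part being $\mcD_\alpha(p)^{-1}$, up to a convergent correction absorbed into the definition of $F^\alpha_\infty$), so $t^3\rho(t,x)\to\rho_\infty(x/t)$ with the momentum label tending to $v_\alpha^{-1}(x/t)$; the extra factor $v_\alpha(p)\to q$ in the current yields $j_\infty=q\,\rho_\infty$, while $\int\rho_\infty=\mcM_{\mathrm{net}}=0$ is conservation of charge. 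Differentiating once more in $x$ gives the $t^4$-scaled estimates for $\partial_{x_i}\rho$ and $\partial_{x_i}j$; the $\partial_t j$ estimate comes from the Vlasov equation together with $\nabla_p\cdot K^\alpha=0$ (valid because $v_\alpha(p)=\nabla_p\sqrt{m_\alpha^2+|p|^2}$ is curl-free), which rewrites $K^\alpha\cdot\nabla_p f^\alpha$ as a $p$-divergence; the successive differentiations account for the $\ln^6,\ln^8$ rates.

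\emph{Part (d): the fields.}
From \eqref{Maxwell} one has $(\partial_t^2-\Delta_x)E=-\nabla\rho-\partial_t j$ and $(\partial_t^2-\Delta_x)B=\nabla\times j$, and a direct computation shows the self-similar ansatz $t^{-2}U(x/t)$ satisfies $(\partial_t^2-\Delta_x)\big(t^{-2}U(x/t)\big)=t^{-4}(\mathcal{L}U)(x/t)$ with \emph{precisely} the operator $\mathcal{L}$ of the statement; inserting the source asymptotics of~(c) turns the right-hand sides into $t^{-4}$ times the stated functions of $\rho_\infty,j_\infty$. I then use the retarded (Glassey--Strauss) representation of $E,B$: for $|x|\lesssim\gamma t$ with $t$ large the free-data term vanishes by the sharp Huygens principle (data supported in $\overline{\Gamma}_L$, $\gamma<1$), leaving the retarded integral over the backward light cone; on the compactly-supported source the retarded time $s=t-|x-y|$ is $\asymp t$, so the source there equals $s^{-4}S(y/s)+O(s^{-5}\ln^8 s)$, and rescaling $y=t\eta$ gives $E(t,x)=t^{-2}E_\infty(x/t)+O(t^{-3}\ln^8 t)$ uniformly for $|x|\lesssim\gamma t$, where $E_\infty$ is pinned down as the unique $C^2$ solution of the elliptic boundary-value problem in the statement (uniformly elliptic because $q_iq_j-\delta_{ij}$ has eigenvalues $\le-(1-\gamma^2)<0$ on $|q|\le\gamma$; the zero boundary data reflect the faster-than-$t^{-2}$ behavior of $E$ beyond the particle bulk, which by the choice of $\gamma$ sits strictly inside $|x|<\gamma t$), and similarly for $B$. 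The Lorentz-force limit now follows: for $|x|\lesssim\ln t$ and $|p|\le\beta$ the argument $x+v_\alpha(p)t$ lies in $\{|\,\cdot\,|\le\gamma t\}$ for $t$ large (by the choice of $\gamma$ and $|v_\alpha(p)|<1$), so $t^2K^\alpha(t,x+v_\alpha(p)t,p)=e_\alpha\big[E_\infty(v_\alpha(p)+x/t)+v_\alpha(p)\times B_\infty(v_\alpha(p)+x/t)\big]+O(t^{-1}\ln^8 t)$, which tends to $K^\alpha_\infty(p)$ (the offset $x/t\lesssim\ln t/t$ costing an extra $\lesssim\ln t/t$).

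\emph{Part (e): modified scattering, and the main obstacle.}
Set $g^\alpha(t,x,p):=f^\alpha\big(t,\,x+v_\alpha(p)t-\ln(t)\mathbb{A}_\alpha(p)K^\alpha_\infty(p),\,p\big)$; outside $\{|x|\lesssim\ln t,\ |p|\le\beta\}$ both $g^\alpha(t,\cdot)$ and the claimed limit vanish, so I restrict there. Using $f^\alpha(t,x,p)=f_0^\alpha(\mcXa(0,t,x,p),\mcPa(0,t,x,p))$, I follow the time-$0$ data of the characteristic through the evaluation point: running it forward, its asymptotic momentum equals $p$ up to $O(t^{-1})$ with leading correction $t^{-1}K^\alpha_\infty(p)$ (from~(a) and the Lorentz-force limit of~(d)), and by the modified limit $\mcXa_\infty$ of paragraph~1 the $\ln t$ divergence built into the evaluation point exactly cancels the $\ln t$ drift of the characteristic's time-$0$ position, so that position also converges. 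Matching $\mcXa(t,0,\cdot)=x+v_\alpha(p)t-\ln(t)\mathbb{A}_\alpha(p)K^\alpha_\infty(p)$ and expanding, the time-$0$ data converges, at rate $\lesssim t^{-1}\ln^8 t$ (inherited from the $O(t^{-1}\ln^8 t)$ error in the momentum shift), to the point $(x_0^\infty,p_0^\infty)$ characterized by $\mcPa_\infty(0,x_0^\infty,p_0^\infty)=p$ and $\mcXa_\infty(0,x_0^\infty,p_0^\infty)=x-\mathbb{A}_\alpha(p)K^\alpha_\infty(p)$ --- a $C^2$ diffeomorphism, being a near-identity perturbation up to logarithmically controlled corrections for $\epsilon_0$ small. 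Then $f^\alpha_\infty(x,p):=f_0^\alpha(x_0^\infty(x,p),p_0^\infty(x,p))$ is continuous and $|g^\alpha(t,x,p)-f^\alpha_\infty(x,p)|\le\|\nabla f_0^\alpha\|_\infty\,O(t^{-1}\ln^8 t)$. I expect part~(d) to be the main obstacle --- extracting the precise self-similar profiles of $E,B$ from the Glassey--Strauss representation with a genuinely time-dependent source and correctly identifying them with the elliptic boundary-value problems (in particular pinning the boundary behavior and the aperture $\gamma$). A related subtlety worth flagging is that in~(e) a naive ``$\partial_t g^\alpha$ is integrable'' argument fails, because the Jacobian corrections in $\nabla_p f^\alpha+t\mathbb{A}_\alpha\nabla_x f^\alpha$ do not decay in $t$; this is precisely why the characteristic-limit route above is needed.
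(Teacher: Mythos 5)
Your skeleton matches the paper's in most places: part (a) is the paper's argument verbatim (field decay inside the cone plus integrating $\dot{\mcPa}=K^\alpha$); in (b)--(c) you go Lagrangian (changes of variables back to the data along characteristics) where the paper works Eulerian, through the translated function $g(t,x,p)=f(t,x+v(p)t,p)$, the bound $|\partial_t F|\lesssim t^{-2}\ln^5 t$, and a mean-value lemma in the self-similar variable; your identification of $F_\infty$ as the pushforward density under $\mcP_\infty(0,\cdot,\cdot)$ agrees with the paper's, and your route to $\partial_t j$ via $\nabla_p\cdot K^\alpha=0$ and second moments is a legitimate alternative to the paper's integration by parts. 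The first genuine gap is quantitative: every stated rate ($\ln^5,\ln^6,\ln^8$) rests on support and commuted-derivative estimates for the translated distribution --- $\mcR(t)\lesssim\ln t$, $\|\nabla_p g(t)\|_\infty\lesssim\ln^2 t$, $\|\nabla_p^2 g(t)\|_\infty\lesssim\ln^4 t$ --- or on equally delicate Lagrangian analogues (convergence rates for the flow Jacobians), and none of these are derived; ``tracking the logarithms'' is where the work is. The second is in (d): the architecture (Huygens for the data, the computation $\Box[t^{-2}U(x/t)]=t^{-4}(\mathcal{L}U)(x/t)$, the elliptic boundary-value problem on $|q|<\gamma$) is the paper's, but the content of the paper's Lemma \ref{LWave} is the decomposition $\psi=\psi_H+\psi_I+\psi_{II}$ with $\psi_I$ the exact self-similar solution, $\psi_H$ annihilated inside the cone by strong Huygens, and --- crucially --- the estimate $|\psi_{II}|\lesssim t^{-3}\ln^8 t$ for the retarded integral of the discrepancy $\eta-t^{-4}\eta_\infty(x/t)$, proved by the two-dimensional reduction of Lemma \ref{LGS_CV} and a careful splitting of the $\tau$-integral. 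You assert this error bound rather than prove it, and you flag it yourself as the main obstacle; as written, (d) is incomplete.

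The concrete error is in (e). You reject the ``naive'' argument that $\partial_t$ of the log-corrected translate is integrable on the grounds that $\nabla_p f^\alpha+t\mathbb{A}_\alpha\nabla_x f^\alpha$ ``does not decay in $t$'' --- but it does not need to decay, only to grow slower than $t$. This combination is exactly $\nabla_p g$, and the paper shows (Lemma \ref{Dvg}) that it grows only like $\ln^2 t$; hence along $h(t,x,p)=g(t,x-\ln(t)\mathbb{A}(p)K_\infty(p),p)$ one has
\begin{equation*}
\partial_t h=t^{-1}\mathbb{A}(p)\left(t^2K-K_\infty\right)\cdot\nabla_x g-K\cdot\nabla_p g,
\end{equation*}
with the first term $O(t^{-2}\ln^8 t)$ by the Lorentz-force limit and $\|\nabla_x g\|_\infty\lesssim1$, and the second $O(t^{-2}\ln^2 t)$ since $|K|\lesssim t^{-2}$ on the relevant set; this is integrable and is precisely the paper's proof of (e), yielding the rate directly with $f_\infty$ merely continuous and no inverse map needed. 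Your substitute route --- constructing the modified spatial limit $\mcX_\infty$ along backward characteristics and setting $f_\infty=f_0\circ(\mcX_\infty,\mcP_\infty)^{-1}$ --- could be made rigorous, but it requires proving that $(x_0,p_0)\mapsto(\mcX_\infty,\mcP_\infty)(0,x_0,p_0)$ is an invertible $C^1$ map with quantitative control (the $O(t)$ growth of $\partial_{p_0}\mcXa(t)$ must be shown to cancel against $\partial_{p_0}\left[v_\alpha(\mcPa_\infty)t\right]$ before the limit exists, let alone is near-identity), together with a rate for the convergence of the backward data; these steps are only asserted. So as it stands, (e) rests on a false premise about the simpler argument and replaces it with a heavier one whose key lemmas are missing.
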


%

\alt{In the case that $\rho_\infty \not\equiv 0$, these estimates are sharp up to a correction in the logarithmic powers of the error terms. However, because the plasma is neutral, i.e. $\mcM_{\mathrm{net}} = 0$, it is possible that the limiting charge density $\rho_\infty$ (and hence $j_\infty$, $E_\infty$, and $B_\infty$) is identically zero, which implies stronger decay of these quantities.}

\begin{theorem}
\label{T2}
\alt{If $\rho_\infty \equiv 0$}, then the asymptotic behavior described above is improved in the following manner:

\begin{enumerate}[(a)]
\item 
For any $\alpha = 1, ..., N$, $\tau \geq 0$ and $(x,p) \in \bS_{f^\alpha}(\tau)$, we have
$$\left | \mcP^\alpha(t, \tau, x, p) - \mcP_\infty^\alpha(\tau, x, p)  \right | \lesssim t^{-2}\ln^8(t).$$

\item We have the faster decay estimates
\begin{align*}
& \| \rho(t) \|_\infty + \| j(t) \|_\infty \lesssim t^{-4}\ln^6(t),\\
& \| \nabla_x \rho(t) \|_\infty + \| \nabla_x j(t) \|_\infty + \| \partial_t j(t) \|_\infty \lesssim t^{-5}\ln^8(t), & 
\end{align*}
as well as
$$\sup_{|x| \lesssim \gamma t}  \left ( \left |E(t, x) \right | + \left |B(t,x) \right | \right ) \lesssim t^{-3}\ln^8(t).$$

\end{enumerate}
\end{theorem}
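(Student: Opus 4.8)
The plan is to deduce Theorem~\ref{T2} from Theorems~\ref{T0} and~\ref{T1} by observing that the vanishing of $\rho_\infty$ propagates to every limiting object. Since $j_\infty(q) = q\,\rho_\infty(q)$ by definition, $\rho_\infty \equiv 0$ forces $j_\infty \equiv 0$, and then each source term in the elliptic boundary value problems of Theorem~\ref{T1}(d), namely $-\partial_{q_i}\rho_\infty + 3j^i_\infty + q\cdot\nabla_q j^i_\infty$ and $(\nabla_q\times j_\infty)^i$, vanishes identically on $\Gamma_\gamma$. As these are uniformly elliptic problems with zero boundary data admitting a unique $C^2$ solution, we conclude $E^i_\infty \equiv B^i_\infty \equiv 0$ for all $i$, hence $K^\alpha_\infty \equiv 0$ for every $\alpha$. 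With these identifications in hand, part~(b) is immediate: substituting $\rho_\infty \equiv j_\infty \equiv 0$ into the estimates of Theorem~\ref{T1}(c) yields the claimed $t^{-4}\ln^6(t)$ bound for $\|\rho(t)\|_\infty + \|j(t)\|_\infty$ and the $t^{-5}\ln^8(t)$ bound for $\|\nabla_x\rho(t)\|_\infty + \|\nabla_x j(t)\|_\infty + \|\partial_t j(t)\|_\infty$, while substituting $E_\infty \equiv B_\infty \equiv 0$ into Theorem~\ref{T1}(d) gives $\sup_{|x|\lesssim\gamma t}\bigl(|E(t,x)| + |B(t,x)|\bigr) \lesssim t^{-3}\ln^8(t)$.

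For part~(a), the idea is to integrate the characteristic system and insert the improved field decay just obtained. Fix $\alpha$, $\tau \ge 0$ and $(x,p)\in\bS_{f^\alpha}(\tau)$. The characteristic $\bigl(\mcX^\alpha(s,\tau,x,p),\mcP^\alpha(s,\tau,x,p)\bigr)$ stays in $\bS_{f^\alpha}(s)$, so its momentum remains in $\overline{\Gamma}_\beta$ by Theorem~\ref{T0} and --- as already used in Theorem~\ref{T1}(d), the particle speed being strictly below $\gamma$ --- its spatial component satisfies $|\mcX^\alpha(s,\tau,x,p)|\lesssim\gamma s$. Hence part~(b)'s field estimate applies along the trajectory, giving $\bigl|K^\alpha\bigl(s,\mcX^\alpha(s,\tau,x,p),\mcP^\alpha(s,\tau,x,p)\bigr)\bigr| \lesssim s^{-3}\ln^8(s)$ for $s$ large. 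Since $\mcP^\alpha_\infty$ exists by Theorem~\ref{T1}(a) and
\[
\mcP^\alpha_\infty(\tau,x,p) - \mcP^\alpha(t,\tau,x,p) = \int_t^\infty K^\alpha\bigl(s,\mcX^\alpha(s,\tau,x,p),\mcP^\alpha(s,\tau,x,p)\bigr)\,ds ,
\]
integrating and using $\int_t^\infty s^{-3}\ln^8(s)\,ds \lesssim t^{-2}\ln^8(t)$ yields the asserted estimate; note the bound for $|K^\alpha|$ holds for all $s$ beyond a threshold that is independent of $\tau$ and of $(x,p)$, so the implied constant is uniform.

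The proof is short because the analytic effort is already spent in Theorems~\ref{T0} and~\ref{T1}; what remains is the cascade $\rho_\infty \equiv 0 \Rightarrow j_\infty \equiv E_\infty \equiv B_\infty \equiv K^\alpha_\infty \equiv 0$, which rests on uniqueness for the elliptic problems, together with the one-step bootstrap in part~(a). I expect the only point requiring genuine care to be the bookkeeping in part~(a): confirming that the improved field estimate can be applied along the trajectories for all times beyond a $\tau$-independent threshold so that the final constant does not degenerate --- a consequence of the characteristic confinement inside the light cone already established for Theorem~\ref{T1}, and hence not a serious obstacle.
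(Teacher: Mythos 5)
Your proposal is correct and follows essentially the same route as the paper's proof: deduce $j_\infty \equiv 0$ and then $E_\infty \equiv B_\infty \equiv K^\alpha_\infty \equiv 0$ from elliptic uniqueness, substitute these into the convergence estimates of Theorem~\ref{T1}(c)--(d) to obtain part~(b), and finally integrate the improved field decay along characteristics (as in Lemma~\ref{L6}, which gives $|\mcP - \mcP_\infty| \lesssim \int_t^\infty \mcK_0(s)\,ds$) to obtain part~(a).
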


\begin{remark}
\label{rem:1}
Though we do not prove such a refinement here, we note that the improved decay estimates on $\rho$, $j$, and their derivatives in Theorem \ref{T2}, also induce faster decay of the derivatives of the electric and magnetic fields.
Indeed, following the iteration argument of Glassey and Strauss \cite{GS}, and using these improved decay rates \alt{with \eqref{xlesssim}} one can establish the estimate
$$\sup_{|x| \lesssim \gamma t} \left ( \left |\nabla_x E(t, x) \right | + \left |\nabla_x B(t,x) \right | \right ) \lesssim t^{-4}\ln^8(t).$$
This faster rate uniformly bounds derivatives of the translated distribution function via Lemma \ref{Dvg} (as $\mcK_1(t) \lesssim t^{-4}\ln^8(t)$; see Section \ref{sec:prelim}) and induces faster convergence of the spatial averages via Lemma \ref{Funif}.
These results can then be iteratively utilized with our tools \alt{(see also \cite{Pankavich2021})} to improve the rates on the charge and current density, as well as other quantities in the system, and ultimately obtain the subsequent estimates
\begin{align*}
\left | \mcP^\alpha(t, \tau, x, p) - \mcP_\infty^\alpha(\tau, x, p)  \right | & \lesssim t^{-2},\\
\| F^\alpha(t) - F^\alpha_\infty \|_\infty & \lesssim t^{-2}, \\
\| \rho(t) \|_\infty + \| j(t) \|_\infty & \lesssim t^{-4},\\
\| \nabla_x \rho(t) \|_\infty + \| \nabla_x j(t) \|_\infty + \| \partial_t j(t) \|_\infty & \lesssim t^{-5},
\end{align*}
as well as
$$\sup_{|x| \lesssim \gamma t} \left ( \left |E(t, x) \right | + \left |B(t,x) \right | \right ) \lesssim t^{-3},$$
and
$$\sup_{|x| \lesssim \gamma t} \left (  \left |\nabla_x E(t, x) \right | + \left |\nabla_x B(t,x) \right | \right ) \lesssim t^{-4}.$$
Finally, these faster rates remove the need to modify the scattering trajectories and yield the improved convergence of the particle distributions, namely
$$\sup_{(x,p) \in \bfR^6} \left | f^\alpha(t,x +v_\alpha(p)t, p) - f^\alpha_\infty(x,p) \right |  \lesssim t^{-1}$$
for any $\alpha = 1, ..., N$.
%
As previously mentioned, we do not present a proof of these improvements here, and as such, do not include these statements in the above theorem.
\end{remark}



\begin{remark}
Analogues of Theorems \ref{T1} and \ref{T2} can likely be shown for small data solutions of the relativistic Vlasov-Poisson system using a combination of our methods to allow for relativistic velocity corrections and the simpler tools created in \cite{Pankavich2021} to estimate an electrostatic, rather than electromagnetic, field. Additionally, spherically symmetric solutions of the classical and relativistic Vlasov-Poisson system, as considered in \cite{Pankavich2020}, should also display these asymptotic dynamics.
\end{remark}

\subsection{Idea of the Proof}
To establish the scattering result, we will show that the time derivative of the distribution function decays faster than $t^{-1}$ in $L^\infty$. 
Dropping the $\alpha$ notation, as well as all masses and charges, we note that this cannot hold if the particle distribution is evaluated along $(x,p)$, as the transport term $v(p) \cdot \nabla_x f$  cannot be expected to decay in time.
However, evaluating this function along the translated curves in phase space $(x + v(p)t, p)$ by defining
$$g(t,x,p) = f(t, x+v(p)t, p)$$
transforms the Vlasov equation for $g$ into
\begin{equation}
\label{VMg}
\partial_{t}g -  t \mathbb{A}(p)K(t,x+v(p)t, p)\cdot \nabla_{x}g+  K(t,x+v(p)t, p) \cdot\nabla_{p}g=0.
\end{equation}
Because the spatial support of $g$ stays strictly inside the light cone, one generally expects the Lorentz force to decay like $t^{-2}$ on this set (see Theorem \ref{GS}).
As we will demonstrate later (Lemma \ref{Dvg}), the $p$-derivatives of $g$ satisfy $\| \nabla_p g(t) \|_\infty \lesssim \ln^2(t)$ thereby making the last term in \eqref{VMg} integrable in time.
Additionally, $\| \nabla_x g(t) \|_\infty \sim 1$, which means that the second term in the transformed Vlasov equation \eqref{VMg} will decay at best like $t^{-1}$.
Hence, in order to induce a faster decay rate, the trajectories must be augmented by a limiting Lorentz force that is generated by the limiting electric and magnetic fields.
In particular, we further define
$$h(t,x,p) = g(t, x - \ln(t) \mathbb{A}(p)K_\infty(p), p)$$
so that
$$\partial_{t}h =  t^{-1} \mathbb{A}(p) \left ( t^2K(t,\mcW(t,x,p), p) - K_\infty(p)\right )\cdot \nabla_{x}g -  K(t,\mcW(t,x,p), p) \cdot\nabla_{p}g$$
where
$$\mcW(t,x,p) = x+v(p)t- \ln(t) \mathbb{A}(p)K_\infty(p).$$
Upon establishing an estimate on the convergence rate of the Lorentz force inside the light cone so that $t^2 K(t,\mcW(t,x,p), p)$ tends to $K_\infty(p)$ at some algebraic order, say $t^{-\epsilon}$ for some $\epsilon > 0$, we will arrive at 
$$\| \partial_t h(t) \|_\infty \lesssim t^{-1-\epsilon},$$
which is integrable and guarantees the existence of a limiting particle distribution along such trajectories in phase space.
Of course, this requires an understanding of the higher-order behavior of the electromagnetic fields and their limits along these same trajectories. 
In turn, one must identify the higher-order convergence of the macroscopic densities and their derivatives in order to establish the limiting field behavior.
Moreover, a crucial result (Lemma \ref{LWave}) is needed to pass from the asymptotic behavior of the densities and their derivatives to the fields, which satisfy inhomogeneous wave equations with derivatives of these density terms as sources.
Within \cite{Pankavich2021}, the electrostatic scattering problem was considered, and this merely required one to deduce the asymptotic behavior of the electric field as a solution to Poisson's equation, given the known asymptotic behavior of the charge density.
Herein, the full electromagnetic description of the system is imposed, and this requires the significantly more challenging task of preserving the asymptotic behavior of the source terms in passing to the fields via inversion of wave equations.\\

In the next section, we establish some preliminary results that will be used throughout the paper.
Section \ref{sec:lower} then contains estimates on the electric and magnetic fields, characteristics, and derivatives of the translated particle distribution function, as well as, the convergence of the spatial average and charge and current densities.
Section \ref{sec:higher} assumes an estimate on second derivatives of the fields that will be provided by Theorem \ref{T0} and establishes the asymptotic behavior of derivatives of the densities and the spatial average. 
Estimates that provide the analogous asymptotic behavior for the electric and magnetic fields are contained within Section \ref{sect:fieldconv}, as is the modified scattering result for the distribution functions. The proofs of Theorems \ref{T0}, \ref{T1}, and \ref{T2} are provided in Section \ref{sec:thmpfs}. \alt{Finally, in Section \ref{sec:other-models} we briefly discuss how these results can be adapted to small data solutions of the non-relativistic Vlasov-Maxwell system.}\\

Though our results pertain directly to multiple species, for the majority of this article we will derive estimates for the single species system with charge $e_\alpha=1$. Hence, the number of species $N$, particle masses $m_\alpha$, particle charges $e_\alpha$, and the factors of $4\pi$ within (\hyperref[Vlasov]{RVM}) are all normalized to $1$.
We emphasize that these assumptions are not needed to achieve the results stated above, but they do significantly simplify the presentation.
In Section \ref{sec:thmpfs} we will then use the established lemmas to distinguish between the \alt{non-vanishing charge density limit ($\rho_\infty \not \equiv 0$) and vanishing charge density limit ($\rho_\infty \equiv 0$)} cases in the proofs of Theorems \ref{T1} and \ref{T2}.
With this, the relativistic velocity function defined by \eqref{Rel_vel} simplifies to
$$v(p) = \frac{p}{\sqrt{1+ |p|^2}}$$
and its inverse becomes
$$v^{-1}(q) = \frac{q}{\sqrt{1 - |q|^2}}.$$
Additionally, the Lorentz force becomes
$$K(t,x,p) = E(t,x) + v(p) \times B(t,x).$$
Upon inserting these simplifications, the system (\hyperref[Vlasov]{RVM}) reduces to 
\begin{equation}
\tag{RVM$_\text{Single}$}
\label{RVM}
\left \{ \begin{aligned}
& \partial_{t}f+v (p)\cdot\nabla_{x}f + K(t,x,p) \cdot\nabla_{p}f=0,\\
& \rho(t,x) = \int f(t,x,p) \,dp, \qquad j(t,x) =\int v(p) f(t,x,p)  \ dp,\\
& \partial_t E - \nabla \times B = - j,\hspace{1.3cm}\nabla \cdot E = \rho,\\
& \partial_t B + \nabla \times E = 0,\hspace{1.55cm} \nabla \cdot B = 0.
\end{aligned} \right.
\end{equation}
Many other previously-defined quantities then simplify and are now independent of $\alpha$, including $\bS_{f}(t), \mathbb{A}(p), \mcD(p)$, and $\mathbb{B}(q)$.
In this case the particle number \alt{is denoted by removing any superscripts}, namely as $\mcM$.
When useful, we will also use the notation $p_0 = \sqrt{1 + |p|^2}$ to denote the particle rest momentum.

\section{Preliminary Lemmas}
\label{sec:prelim}

First, we state the small data theorem of \cite{GS}, which provides the foundational structure of solutions from which we will build. In particular, we state the result for $N$ species, as in the original article.

\begin{theorem}[Glassey-Strauss \cite{GS}]
\label{GS}
For any $L > 0$ there exist $\epsilon_0 > 0$ and $\beta > 0$ with the following property.
Let $f^\alpha_0 \in C^1$ be non-negative functions supported on $\{ |x| \leq L\} \times \{|p| \leq L \}$.
Let $E_0, B_0 \in C^2$ be supported on $\{|x| \leq L\}$ satisfying the compatibility conditions \eqref{compat}.
If the initial data satisfy
$$\sum_{\alpha = 1}^N \Vert f^\alpha_0 \Vert_{C^1} + \Vert E_0 \Vert_{C^2} + \Vert B_0 \Vert_{C^2} \leq \epsilon_0,$$
then there exists a unique classical solution of (\hyperref[Vlasov]{RVM}) for all $x, p \in \bfR^3$ and $t \geq 0$ such that
$$f^\alpha(t,x,p) = 0 \qquad \mathrm{for} \qquad |p| \geq \beta$$
for all $\alpha = 1, ..., N$, $t\geq 0$, and $x \in \bfR^3$.
Furthermore, 
$|E(t,x)| + |B(t,x)| = 0$ for $|x| > t + L$
and we have the estimates
$$s - |\mcX^\alpha(s,t,x,p)| + 2L \geq \frac{s + L}{2 (1+\beta^2)}$$
for all $s, t \geq 0$, and $x,p \in \bfR^3$
with

$$|E(t,x)| + |B(t,x)| \leq \frac{C}{(t+1)\left (t - |x | + 2L \right )},$$
$$|\nabla_x E(t,x)| + |\nabla_x B(t,x)| \leq \frac{C\ln(t+2)}{(t+1)\left (t - |x | + 2L \right )^2},$$
$$\Vert \nabla_x f^\alpha(t) \Vert_\infty \leq C \qquad \mathrm{and} \qquad \Vert \nabla_p f^\alpha(t) \Vert_\infty \leq C(1+t),$$
and
$$|\rho(t,x)|  + | j(t,x)|  \leq C(1+t)^{-3}$$
for some $C>0$ and all $t \geq 0$, $x \in \bfR^3$.
\end{theorem}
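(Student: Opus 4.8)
\emph{The plan} is to follow the Glassey--Strauss scheme: a local classical solution, a continuation criterion phrased in terms of the momentum support, and then a bootstrap on that support driven by a representation formula for the fields. A standard $C^1$ iteration scheme yields a unique classical solution on a maximal interval $[0,T^*)$, and the Glassey--Strauss continuation criterion states that $T^*<\infty$ forces $P(t):=\sup\{\,|p| : f^\alpha(s,x,p)\ne 0 \text{ for some } \alpha,\ x\in\bfR^3,\ s\le t\,\}\to\infty$ as $t\uparrow T^*$; hence it suffices to prove an a priori bound $P(t)\le\beta$ for all $t$, with $\beta$ depending only on $L$. By Liouville's theorem $\bS_f(s):=\bigcup_\alpha\bS_{f^\alpha}(s)$ is transported by the characteristics, and $\tfrac{d}{ds}|\mcPa(s,0,x,p)|\le |K^\alpha|\le |E|+|B|$ along the trajectory, so
\[
P(t)\;\le\; L + \int_0^t \sup_{(y,q)\in\bS_f(s)}\big(|E(s,y)|+|B(s,y)|\big)\,ds ;
\]
everything therefore reduces to integrable-in-time decay of $E$ and $B$ on the particle support.

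\emph{Field representation.} The crucial input is the Glassey--Strauss representation of $E$ and $B$: using Maxwell's equations together with the Vlasov equation to trade the ill-behaved $\partial_t f$ for $x$- and $p$-derivatives of $f$, one writes $E(t,x),B(t,x)$ as a sum of (i) a \emph{data term} built from $E_0,B_0,f_0$, which by the strong Huygens principle lives in the light shell $\{\,t-L\le |x|\le t+L\,\}$ --- hence vanishes on $\bS_f(t)$ once $t$ is large (there $|x|\le L+\bar v t < t-L$ with $\bar v:=\beta(1+\beta^2)^{-1/2}<1$) --- and is in any case $\lesssim\epsilon_0(1+t)^{-2}$; (ii) a \emph{``$T$''-term}, an integral of $f$ itself over the backward light cone $\{\,|y-x|=t-s\,\}$ against a kernel of size $|x-y|^{-1}$; and (iii) an \emph{``$S$''-term}, an integral of $f$ over the solid backward cone $\{\,|y-x|\le t-s\,\}$ against the more singular kernel $|x-y|^{-2}$. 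Both cone kernels carry a factor that degenerates only when $\hat{v}\cdot\omega\to-1$, i.e.\ for particles moving toward $x$ at the speed of light; \emph{this resonance is excluded by the compact momentum support}, since on $\bS_f$ one has $|\hat{v}|=|v_\alpha(p)|\le\bar v<1$. Combining this with $\|f(s)\|_\infty\le\|f_0\|_\infty\le\epsilon_0$ and $\bS_f(s)\subset\{\,|y|\le L+\bar v s\,\}$ --- so that the forward support cone and the backward light cone from $(t,x)$ are transversal and their kernel-weighted intersection has measure $O\big((1+t)^{-1}(t-|x|+2L)^{-1}\big)$ --- one obtains, once $P\le\beta$,
\[
|E(t,x)|+|B(t,x)|\;\le\;\frac{C\epsilon_0}{(1+t)\,(t-|x|+2L)}\qquad (|x|<t+L),
\]
with $C=C(L)$, and, after differentiating the representation once (whereby each kernel gains a factor $|x-y|^{-1}$ and a borderline, only logarithmically divergent, piece appears),
\[
|\nabla_x E(t,x)|+|\nabla_x B(t,x)|\;\le\;\frac{C\epsilon_0\,\ln(t+2)}{(1+t)\,(t-|x|+2L)^2}.
\]

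\emph{Closing the bootstrap and the remaining estimates.} Set $\beta:=2L$ and suppose $P(t)\le\beta$ on $[0,T]$. Then on $\bS_f(s)$ one has $t-|x|+2L\ge(1-\bar v)s+2L\gtrsim 1+s$, hence $|E|+|B|\lesssim\epsilon_0(1+s)^{-2}$ there; feeding this into the displayed inequality for $P$ yields $P(t)\le L+C\epsilon_0<2L$ once $\epsilon_0=\epsilon_0(L)$ is small. As $t\mapsto P(t)$ is continuous and nondecreasing with $P(0)\le L$, the set $\{\,t:P(t)\le 2L\,\}$ is nonempty, closed, and (by the strict inequality) open, hence all of $[0,\infty)$, and the continuation criterion then produces the global classical solution. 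The containment $\bS_f(s)\subset\{\,|y|\le L+\bar v s\,\}$ gives $s-|\mcXa(s,t,x,p)|+2L\ge(1-\bar v)s+L\ge (s+L)/(2(1+\beta^2))$, using $1-\bar v=(\sqrt{1+\beta^2}\,(\sqrt{1+\beta^2}+\beta))^{-1}\ge(2(1+\beta^2))^{-1}$; for $(x,p)$ off the support one instead uses $|\dot{\mcXa}|<1$ everywhere together with $K^\alpha\equiv 0$ outside the light cone. For the density decay, write $f^\alpha(t,x,p)=f^\alpha_0(\mcXa(0,t,x,p),\mcPa(0,t,x,p))$ and change variables $p\mapsto\mcXa(0,t,x,p)$; the Jacobian equals $t^3\,|\det\mathbb{A}_\alpha(p)|$ up to lower-order field corrections, hence is $\gtrsim t^3$ for $t$ large, so the set of admissible $p$ has volume $\lesssim t^{-3}$ and $|\rho(t,x)|+|j(t,x)|\le\|f_0\|_\infty\cdot O(t^{-3})\lesssim\epsilon_0(1+t)^{-3}$ (trivially bounded for $t\le 1$). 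Finally $\|\nabla_x f^\alpha(t)\|_\infty\lesssim 1$ and $\|\nabla_p f^\alpha(t)\|_\infty\lesssim 1+t$ come from a second, coupled bootstrap: differentiating the Vlasov equation gives a linear transport equation for $\nabla_{x,p}f^\alpha$ along characteristics, whose source involves $\nabla_{x,p}K^\alpha$ and $\nabla_{x,p}f^\alpha$; the piece $\mathbb{A}_\alpha(p)\nabla_x f^\alpha$ arising from differentiating $v_\alpha(p)\cdot\nabla_x f^\alpha$ in $p$ is $O(1)$ and integrates to the linear growth of $\nabla_p f^\alpha$, while $\int_0^t\sup_{\bS_f(s)}(|\nabla_x E|+|\nabla_x B|)\,ds\lesssim\int_0^\infty\epsilon_0(1+s)^{-3}\ln(s+2)\,ds<\infty$ makes the Gronwall estimate for $\nabla_x f^\alpha$ close with a uniform constant, which in turn renders the field-derivative bound self-consistent.

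I expect the main obstacle to be exactly the sharp singular-kernel analysis of the ``$S$''-term (and of its differentiated analogue): one must extract the rate $(1+t)^{-1}(t-|x|+2L)^{-1}$ from an interior integral carrying the non-integrable-looking kernel $|x-y|^{-2}$, which succeeds only because the non-resonance factor precisely cancels the dangerous part and because the transversality of the backward light cone with the forward support cone confines the effective domain of integration --- the single place where the compact momentum support, and hence the parameter $\beta$, is genuinely indispensable.
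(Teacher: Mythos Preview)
The paper does not prove this theorem: it is stated as the small-data result of Glassey and Strauss \cite{GS} and invoked as the foundation on which Theorem~\ref{T0} is built. Your outline is faithful to the original Glassey--Strauss argument --- local existence plus continuation criterion, the field representation formula, a bootstrap on the momentum support closed via the field decay on $\bS_f$, density decay by a change of variables $p\mapsto\mcX(0,t,x,p)$, and derivative estimates via Gronwall --- and is essentially correct in spirit.

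One inaccuracy worth flagging: in the Glassey--Strauss representation (cf.\ \cite[Theorem~5.3.1]{Glassey}) both the $T$- and $S$-terms are integrals over the \emph{solid} backward cone $\{|x-y|\le t\}$; the $T$-term carries the more singular kernel $|x-y|^{-2}$ against $f$, while the $S$-term carries $|x-y|^{-1}$ against $Sf=-K\cdot\nabla_p f$ (then integrated by parts in $p$). You have the singularities reversed and describe the $T$-term as a surface integral. This does not change the architecture of the bootstrap, but the correct bookkeeping of which term carries which power of $|x-y|$ is what actually produces the $(1+t)^{-1}(t-|x|+2L)^{-1}$ rate in the kernel analysis you identify as the main obstacle.
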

As the small data solutions that we construct satisfy the hypotheses of the Glassey-Strauss small data theorem, we invoke their result to establish the existence of $C^1$ solutions and derive new estimates that will be critical in the construction of $C^2$ solutions.
Hence, throughout this section, we work with the solutions guaranteed by Theorem \ref{GS} and derive the large time asymptotic behavior of quantities in the system.

We first establish some basic properties of the function $v(p)$ transforming momenta into relativistic velocities and its inverse $v^{-1}(q)$ which takes velocities to momenta (see Figure \ref{fig:p-q}).

	\begin{figure}
	\includegraphics[width=8cm]{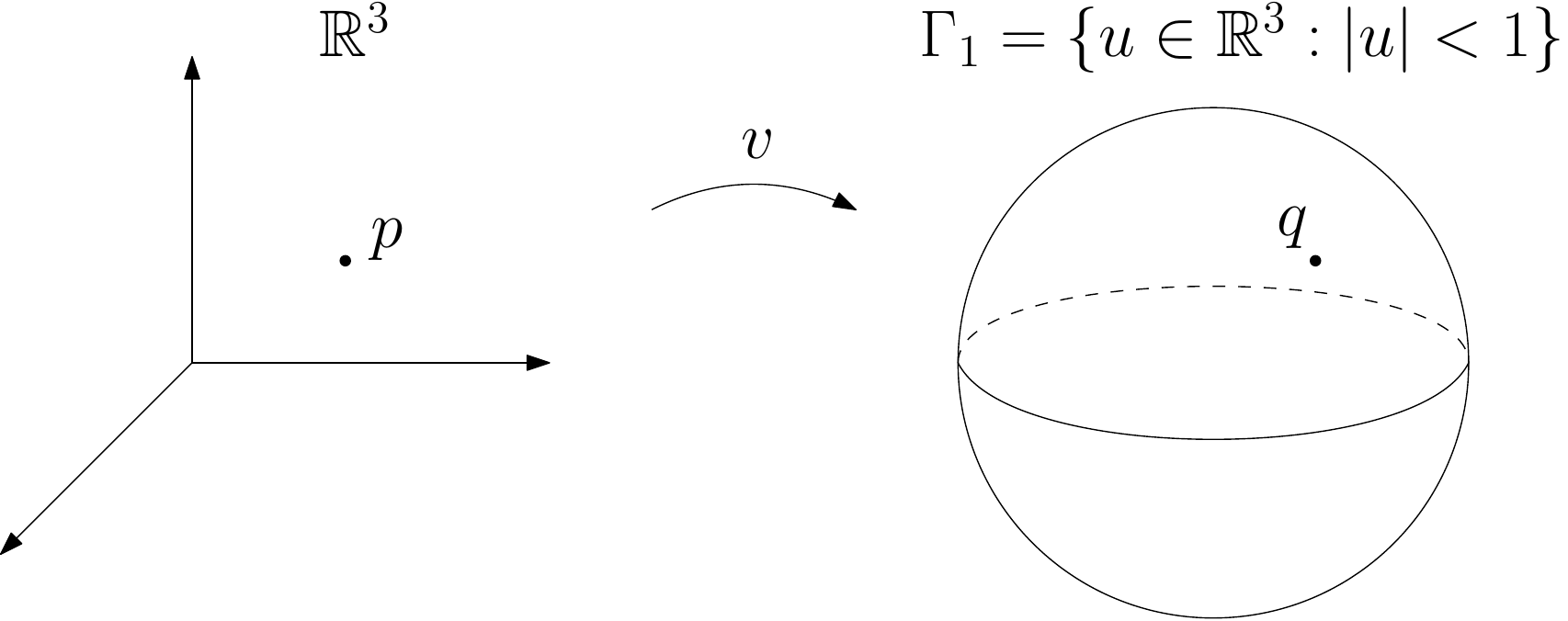}
	\caption{Throughout  this article, $p\in\R^3$ shall denote the momentum variable and $q=v(p)=p/p_0\in\Gamma_1$ shall denote the associated relativistic velocity.}\label{fig:p-q}
	\end{figure}

\begin{lemma}
\label{lem:v}
%
The symmetric, matrix-valued function $\mathbb{A}: \bfR^3 \to \bfR^{3 \times 3}$ defined by $\mathbb{A}(p) = \nabla v(p)$, so that its entrywise representation is
$$\mathbb{A}_{ij}(p) = p_0^{-3} \left [ (1 + |p|^2) \delta_{ij} - p_i p_j \right ],$$
satisfies
$$ |\mathbb{A}_{ij}(p)| \leq p_0^{-1} \leq 1$$
for all $p \in \bfR^3$ and
$$ \left |\det  \left (\mathbb{A}(p) \right ) \right | = p_0^{-5}.$$
Furthermore, 
the symmetric, matrix-valued function $\mathbb{B}: \Gamma_1 \to \bfR^{3 \times 3}$ defined by $\mathbb{B}(q) = \nabla v^{-1}(q)$, so that its associated entrywise representation is
$$\mathbb{B}_{ij}(q) = \left ( 1 - |q|^2 \right )^{-\frac{3}{2}} \left [ (1 - |q|^2) \delta_{ij} + q_i q_j \right ],$$
is  bounded on compact subsets of $\Gamma_1$. 
Additionally, we have the identity
\begin{equation}
\label{AB}
\sum_{j=1}^3 \mathbb{B}_{ij}(v(p)) \mathbb{A}_{jk}(p) = \delta_{ik}
\end{equation}
for all $i,k = 1, 2, 3$ and every $p\in\bfR^3$.
Because it will appear frequently in later estimates, we further define the function
\begin{equation}
\label{Ddef}
\mcD(p) := p_0^5, 
\end{equation}
which represents the inverse determinant of $\mathbb{A}(p)$.
Finally, the functions $v(p)$, $v^{-1}(q)$, $\mcD(p)$ and all of their derivatives are $C^\infty$ and bounded on any compact subset of their respective domains.

\end{lemma}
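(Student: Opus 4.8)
The plan is to verify each assertion by direct computation, since $v$ and $v^{-1}$ are explicit algebraic functions; there is no serious obstacle, only bookkeeping. First I would compute the entries of $\mathbb{A}$. Writing $p_0 = \sqrt{1+|p|^2}$ so that $\partial_{p_j} p_0 = p_j/p_0$, the quotient rule applied to $v_i(p) = p_i/p_0$ gives $\mathbb{A}_{ij}(p) = \partial_{p_j} v_i(p) = p_0^{-1}\delta_{ij} - p_0^{-3} p_i p_j$, which is exactly the stated formula after collecting the factor $p_0^{-3}$ and using $p_0^2 = 1 + |p|^2$; symmetry is then immediate. For the pointwise bound, the off-diagonal entries satisfy $|\mathbb{A}_{ij}(p)| = p_0^{-3}|p_i p_j| \le p_0^{-3}|p|^2 \le p_0^{-1}$ since $|p|^2 \le p_0^2$, while the diagonal entries $\mathbb{A}_{ii}(p) = p_0^{-3}\left(1 + |p|^2 - p_i^2\right)$ lie in $[p_0^{-3}, p_0^{-1}]$; together with $p_0 \ge 1$ this yields $|\mathbb{A}_{ij}(p)| \le p_0^{-1} \le 1$.

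For the determinant I would exploit the rank-one structure $\mathbb{A}(p) = p_0^{-1}\left(I - p_0^{-2}\, p\otimes p\right)$. The symmetric matrix $I - p_0^{-2}\, p \otimes p$ fixes the orthogonal complement of $p$ (eigenvalue $1$ with multiplicity two) and scales $p$ by $1 - p_0^{-2}|p|^2 = p_0^{-2}$, hence has determinant $p_0^{-2}$; therefore $\det\mathbb{A}(p) = (p_0^{-1})^3 \cdot p_0^{-2} = p_0^{-5} > 0$, which gives both $|\det \mathbb{A}(p)| = p_0^{-5}$ and $\mcD(p) = p_0^5 = |\det\mathbb{A}(p)|^{-1}$. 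The formula for $\mathbb{B}$ follows the same pattern: with $w = \sqrt{1-|q|^2}$ on $\Gamma_1$ one has $\partial_{q_j} w = -q_j/w$, so differentiating $v^{-1}_i(q) = q_i/w$ yields $\mathbb{B}_{ij}(q) = w^{-1}\delta_{ij} + w^{-3} q_i q_j$, i.e. the stated expression; and on a set $\{|q| \le r\}$ with $r < 1$ the factor $w^{-1} = (1-|q|^2)^{-1/2}$ is bounded by $(1-r^2)^{-1/2}$, so every entry of $\mathbb{B}$ is bounded there.

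For the inverse relation \eqref{AB}, I would first record that $v$ and $v^{-1}$ are genuine mutual inverses --- substitution gives $v(v^{-1}(q)) = q$ on $\Gamma_1$ and $v^{-1}(v(p)) = p$ on $\bfR^3$ --- so that $v : \bfR^3 \to \Gamma_1$ is a $C^\infty$ bijection with $C^\infty$ inverse. Differentiating $v^{-1}(v(p)) = p$ by the chain rule then yields $\mathbb{B}(v(p))\,\mathbb{A}(p) = \nabla v^{-1}(v(p))\,\nabla v(p) = I$, which is precisely \eqref{AB} componentwise; one could instead multiply the two explicit matrices and simplify using $1 - |v(p)|^2 = p_0^{-2}$, but the chain-rule argument is shorter. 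Finally, smoothness and local boundedness are transparent: $v(p) = p\,(1+|p|^2)^{-1/2}$ and $\mcD(p) = (1+|p|^2)^{5/2}$ are products of polynomials with the smooth positive function $(1+|p|^2)^{\pm 1/2}$, hence $C^\infty$ on all of $\bfR^3$, and each of their derivatives is a polynomial times a power of $p_0^{-1}$, hence bounded on compacta; likewise $v^{-1}(q) = q\,(1-|q|^2)^{-1/2}$ is $C^\infty$ on $\Gamma_1$ since $1 - |q|^2 > 0$ there, with all derivatives bounded on each $\{|q| \le r\}$, $r < 1$. The only step requiring any thought is the determinant computation, and that reduces to the one-line rank-one observation above.
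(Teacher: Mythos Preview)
Your proposal is correct and is precisely the ``straightforward calculation'' the paper alludes to; the paper omits the proof entirely, so your detailed verification of the entrywise formulas, the rank-one determinant computation, and the chain-rule derivation of \eqref{AB} is exactly what is needed.
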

\begin{proof}
The proof consists of straightforward calculations and is therefore omitted.
\end{proof}

\begin{remark}
Because of the smoothness and boundedness of the velocity map $v(p)$ and its inverse, we will often neglect their contribution to future estimates, as well as those of their derivatives.
\end{remark}
In order to establish decay of the electric and magnetic fields and their derivatives both along characteristics and along translated curves in phase space, we define 
\begin{align*}
\mcK_\ell(t) & := \sup_{\substack{\tau \in [0,t]\\ (x,p) \in \bS_f(\tau)} } \biggl  ( \left |\nabla^\ell_x E(t, \mcX(t, \tau, x, p) ) \right | + \left |\nabla^\ell_xB(t, \mcX(t, \tau, x, p)  )\right | \biggr )\\
& \quad + \sup_{\substack{|x| \lesssim \ln(t)\\ |p| \leq \beta} } \biggl (\left |\nabla^\ell_xE(t, x +v(p)t) \right | + \left |\nabla^\ell_xB(t,x +v(p)t) \right | \biggr ),
\end{align*}
for $\ell = 0, 1, 2$.
%
The next result guarantees that the fields and their derivatives decay quickly on the spatial support of the particle distribution function, which stays firmly inside of the light cone.
The growth of the spatial support of $f$ will also be crucial to proving both the small data theorem and the large time behavior.
As $f$ has compact support, we will estimate only the growth of those characteristics along which $f \neq 0$.
\begin{lemma}
\label{LPrelim}
Define $\zeta = \frac{\beta}{\sqrt{1 + \beta^2}}  $ and  $\gamma = \max\left \{\frac{1}{2}, \frac{2\beta}{\sqrt{1 + 4\beta^2}} \right \} $
where $\beta > 0$ is given by Theorem \ref{GS}, so that $0<\zeta<\gamma<1$.
Then, we have
$$ |\mcX(t,\tau,x,p)| \lesssim \gamma t$$
for any $\tau\in [0,t]$, $(x, p) \in \mcS_f(\tau)$,
and
$$ |\mcX(t,\tau,x,p)| \leq L + \zeta t$$
for all $t \geq 0, \tau\in [0,t]$, $(x, p) \in \mcS_f(\tau)$.
Furthermore, for $|x| \lesssim \ln(t)$ and $|p| \leq \beta$
we have 
$$t - |x + v(p)t| + 2L \gtrsim t.$$
Finally, the fields and their derivatives satisfy
$$\mcK_0(t) \lesssim t^{-2}$$
and
$$\mcK_1(t) \lesssim t^{-3} \ln(t).$$
\end{lemma}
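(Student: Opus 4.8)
The plan is to first control the characteristics, then bootstrap the field estimates of Theorem~\ref{GS} using the resulting confinement. For the characteristic bounds, I would start from the ODE system \eqref{char}: since $\dot{\mcX}(s,\tau,x,p) = v(\mcP(s,\tau,x,p))$ and $|v(q)| = |q|/\sqrt{1+|q|^2} < 1$, with $|\mcP| \leq \beta$ on $\mcS_f$ by Theorem~\ref{GS} (momentum support is preserved along characteristics), we get $|\dot{\mcX}(s,\tau,x,p)| \leq \beta/\sqrt{1+\beta^2} = \zeta$. Integrating from $\tau$ to $t$ and using $|\mcX(\tau,\tau,x,p)| = |x| \leq L$ yields $|\mcX(t,\tau,x,p)| \leq L + \zeta t$. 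For the stronger bound $|\mcX(t,\tau,x,p)| \lesssim \gamma t$ (valid only for $t$ large), observe that $L + \zeta t \leq \gamma t$ once $t \geq L/(\gamma - \zeta)$, since $\zeta < \gamma$; for small $\beta$ one uses $\zeta < \tfrac12 \leq \gamma$, and in general the $\tfrac{2\beta}{\sqrt{1+4\beta^2}}$ term in $\gamma$ is the velocity bound at momentum $2\beta$, comfortably exceeding $\zeta$. Hence for $t$ large, $|\mcX(t,\tau,x,p)| \lesssim \gamma t$.

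Next I would establish the light-cone gap for the translated curves: for $|x| \lesssim \ln(t)$ and $|p| \leq \beta$, write $|x + v(p)t| \leq |x| + |v(p)| t \leq C\ln(t) + \zeta t$, so
\[
t - |x + v(p)t| + 2L \geq (1 - \zeta)t - C\ln(t) + 2L \gtrsim t,
\]
since $1 - \zeta > 0$ absorbs the logarithm for $t$ large. This is the crucial geometric fact: both the genuine characteristics and the free-transport-translated curves remain a distance $\gtrsim t$ inside the light cone $\{|x| = t + L\}$.

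For the field decay, I would simply feed these confinement bounds into the pointwise estimates from Theorem~\ref{GS}. Along a characteristic $x^* = \mcX(t,\tau,x,p)$ we have $t - |x^*| + 2L \gtrsim t$ (either from $|x^*| \leq L + \zeta t$ directly, giving $t - |x^*| + 2L \geq (1-\zeta)t + L \gtrsim t$, or — more cleanly — from the explicit lower bound $s - |\mcX^\alpha(s,t,x,p)| + 2L \geq \frac{s+L}{2(1+\beta^2)}$ already stated in Theorem~\ref{GS}). Theorem~\ref{GS} then gives
\[
|E(t,x^*)| + |B(t,x^*)| \leq \frac{C}{(t+1)(t - |x^*| + 2L)} \lesssim \frac{C}{(t+1) \cdot t} \lesssim t^{-2},
\]
and likewise $|\nabla_x E(t,x^*)| + |\nabla_x B(t,x^*)| \leq \frac{C\ln(t+2)}{(t+1)(t-|x^*|+2L)^2} \lesssim t^{-3}\ln(t)$. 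The same computation applies verbatim at the translated points $x + v(p)t$ using the gap estimate just proved. Taking the supremum over $\tau \in [0,t]$, $(x,p) \in \mcS_f(\tau)$ and over $|x| \lesssim \ln(t)$, $|p| \leq \beta$ in the definition of $\mcK_\ell(t)$ yields $\mcK_0(t) \lesssim t^{-2}$ and $\mcK_1(t) \lesssim t^{-3}\ln(t)$.

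The only genuinely delicate point is the interplay between the constants $\zeta$, $\gamma$, and the logarithmic offset: one must check that $\gamma$ as defined is strictly larger than $\zeta$ (so that the affine bound $L + \zeta t$ is eventually dominated by $\gamma t$) and that $1 - \zeta > 0$ strictly, so the $C\ln(t)$ term in the translated-curve estimate is harmless. Both follow from $\beta < \infty$ and elementary monotonicity of $r \mapsto r/\sqrt{1+r^2}$ — there is no real obstacle, just bookkeeping — which is presumably why the authors placed this lemma early as a routine preliminary. If anything, the ``hard part'' is purely notational: tracking that the two suprema in the definition of $\mcK_\ell$ are each controlled by the same $t^{-2}$ (resp. $t^{-3}\ln t$) bound, which requires the light-cone gap to be $\gtrsim t$ in both regimes, exactly as established above.
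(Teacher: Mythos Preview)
Your proposal is correct and follows essentially the same approach as the paper: bound the velocity along characteristics by $\zeta$ using the momentum support, deduce the spatial confinement $|\mcX| \leq L + \zeta t \lesssim \gamma t$, establish the analogous light-cone gap for the translated curves $x + v(p)t$, and then plug both into the pointwise field estimates from Theorem~\ref{GS} to obtain $\mcK_0(t)\lesssim t^{-2}$ and $\mcK_1(t)\lesssim t^{-3}\ln(t)$.

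One small inaccuracy: you write ``$|\mcX(\tau,\tau,x,p)| = |x| \leq L$'' for $(x,p)\in\mcS_f(\tau)$, but for $\tau>0$ the spatial support has already grown and $|x|\leq L$ need not hold. The clean fix (implicit in the paper's proof) is to trace the characteristic back to time $0$, where the support condition gives $|\mcX(0,\tau,x,p)|\leq L$, and then integrate forward from $0$ to $t$; equivalently, note $|x|\leq L+\zeta\tau$ and then $|x|+\zeta(t-\tau)\leq L+\zeta t$. This is a bookkeeping issue, not a gap in the method.
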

\begin{proof}
To begin, the first results are obtained using the compact momentum support of $f(t)$ and the characteristic equations, namely 
$$|\mcX(t,\tau,x,p)|  \leq |x| + \int_0^t v \left ( \mcP(s,\tau,x,p) \right ) ds \leq L + \frac{\beta}{\sqrt{1 + \beta^2}} t = L + \zeta t \lesssim \frac{2\beta}{\sqrt{1 + 4\beta^2}}t \leq \gamma t.$$
Next, we establish the spatial inequality on translated curves.
The growth assumptions of $x$ and $p$ imply
$$|x + v(p)t| \lesssim \ln(t) + \frac{\beta}{\sqrt{1+ \beta^2}}t \lesssim \gamma t,$$
and thus
$$ t - |x + v(p) t| + 2L \gtrsim \left (1 -\gamma \right ) t + 2L \gtrsim t.$$ 

The field estimates then follow from Theorem \ref{GS}.
In particular, we use the estimates on the fields and spatial characteristics to find
$$\left |E(t, \mcX(t, \tau, x, p)) \right | + \left |B(t, \mcX(t, \tau, x, p)) \right | \leq \frac{C}{(t+1)\left (t - |\mcX(t)| + 2L \right )} \leq \frac{C}{(t+L)^2} \lesssim t^{-2}$$
for all $\tau \in [0,t]$, $(x,p) \in \mcS_f(\tau)$.
Similarly, we have
$$\left |E(t, x +v(p)t)) \right | + \left |B(t,x +v(p)t)) \right | \leq \frac{C}{(t+1)\left (t - |x + v(p)t | + 2L \right )}  \lesssim t^{-2}$$
for $|x| \lesssim \ln(t)$ and $|p| \leq \beta$, which, when combined with the above field estimate, implies the decay of $\mcK_0(t).$

The estimate on derivatives of the fields similarly yields
$$|\nabla_x E(t,\mcX(t, \tau, x, p))| + |\nabla_x B(t,\mcX(t, \tau, x, p))| \leq \frac{C\ln(t+2)}{(t+1)\left (t - |\mcX(t)| + 2L \right )^2} \leq \frac{C\ln(t+2)}{(t+L)^3} \lesssim t^{-3}\ln(t)$$
for all $\tau \in [0,t]$, $(x,p) \in \mcS_f(\tau)$.
Additionally, we have
$$|\nabla_x E(t,x +v(p)t)| + |\nabla_x B(t,x +v(p)t)| \leq \frac{C\ln(t+2)}{(t+1)\left (t - |x +v(p)t| + 2L \right )^2} \lesssim t^{-3}\ln(t)$$
for $|x| \lesssim \ln(t)$ and $|p| \leq \beta$, which gives the stated decay of $\mcK_1(t)$.
\end{proof}

Finally, we will need one lemma that will be used repeatedly to establish \alt{the} convergence of a variety of quantities as $t \to \infty$.

\begin{lemma}
\label{LH}
Let $\mcH : [0,\infty) \times \bfR^3_y \times \bfR^3_p$ be a continuous, bounded function that is continuously differentiable in its third argument with bounded derivatives. 
Then, for any $x\in\bfR^3$  and  $\mcR(t)>0$ satisfying    $x+\Gamma_{\mcR(t)}\subset\Gamma_t$ for $t$ sufficiently large,
we have 
$$ \int_{|y| \lesssim \mcR(t)} \left | \mcH \left (t,y, v^{-1}\left ( \frac{x-y}{t} \right ) \right ) - \mcH \left (t,y, v^{-1}\left (\frac{x}{t} \right ) \right ) \right | \ dy \lesssim t^{-1} \mcR(t)^4 \Vert \nabla_p \mcH(t) \Vert_\infty.$$
\end{lemma}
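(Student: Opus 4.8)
The plan is to reduce the estimate to a pointwise-in-$y$ bound obtained from the mean value theorem in the momentum slot of $\mcH$, and then integrate a single elementary factor. Fix $t$ large enough that $x+\Gamma_{\mcR(t)}\subset\Gamma_t$, and fix $y$ in the region of integration. For $s\in[0,1]$ put $q(s):=(x-sy)/t$, so that $q(0)=x/t$, $q(1)=(x-y)/t$, and $q'(s)=-y/t$. Since $|x-sy|\le|x|+|y|$, the containment hypothesis forces $|x-sy|<t$ for all $s\in[0,1]$ and all such $y$ once $t$ is large, so every $q(s)$ lies in $\Gamma_1$; moreover the whole segment stays in one fixed compact set $K\subset\Gamma_1$. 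As $\mcH(t,y,\cdot)$ is $C^1$ with $\Vert\nabla_p\mcH(t)\Vert_\infty<\infty$, the mean value theorem gives
\[
\left|\mcH\left(t,y,v^{-1}\left(\tfrac{x-y}{t}\right)\right)-\mcH\left(t,y,v^{-1}\left(\tfrac{x}{t}\right)\right)\right|\le\Vert\nabla_p\mcH(t)\Vert_\infty\,\left|v^{-1}(q(1))-v^{-1}(q(0))\right|.
\]

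Next I would convert the difference of momenta into a line integral: since $\nabla v^{-1}=\mathbb{B}$, the fundamental theorem of calculus along $s\mapsto v^{-1}(q(s))$ yields
\[
v^{-1}(q(1))-v^{-1}(q(0))=\int_0^1\mathbb{B}(q(s))\,q'(s)\,ds=-\frac1t\int_0^1\mathbb{B}(q(s))\,y\,ds,
\]
hence $|v^{-1}(q(1))-v^{-1}(q(0))|\le C\,|y|/t$ with $C:=\sup_K\Vert\mathbb{B}\Vert<\infty$ by Lemma \ref{lem:v} (boundedness of $\mathbb{B}$ on compact subsets of $\Gamma_1$). Substituting into the pointwise bound and integrating over the ball,
\[
\int_{|y|\lesssim\mcR(t)}\left|\mcH\left(t,y,v^{-1}\left(\tfrac{x-y}{t}\right)\right)-\mcH\left(t,y,v^{-1}\left(\tfrac{x}{t}\right)\right)\right|dy\le\frac{C}{t}\Vert\nabla_p\mcH(t)\Vert_\infty\int_{|y|\lesssim\mcR(t)}|y|\,dy\lesssim t^{-1}\mcR(t)^4\Vert\nabla_p\mcH(t)\Vert_\infty,
\]
where the last step uses $\int_{|y|\le c\mcR(t)}|y|\,dy\sim\mcR(t)^4$ in three dimensions. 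This is exactly the asserted inequality.

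The only delicate point, and the step I expect to be the main obstacle, is justifying that the segment of velocities $\{q(s):s\in[0,1]\}$ --- equivalently the momenta $v^{-1}(q(s))$ --- remains in a fixed compact subset of $\Gamma_1$ as $t\to\infty$: the matrix $\mathbb{B}$ is only locally bounded on $\Gamma_1$ and blows up like $(1-|q|^2)^{-3/2}$ as $|q|\to1$, so without this control one cannot pull $\sup_K\Vert\mathbb{B}\Vert$ out of the integral. This is precisely what the hypothesis $x+\Gamma_{\mcR(t)}\subset\Gamma_t$ buys: it keeps $|q(s)|\le(|x|+|y|)/t$ bounded away from $1$, confining everything to a single compact $K$; with that in hand the remainder is just the mean value theorem and a volume computation.
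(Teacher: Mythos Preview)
Your proof is correct and follows essentially the same approach as the paper: both arguments apply the fundamental theorem of calculus along the segment $s\mapsto v^{-1}((x-sy)/t)$ to extract a factor of $|y|/t$, then integrate over the ball to pick up $\mcR(t)^4$. The only difference is cosmetic---the paper collapses your two mean-value steps into a single chain-rule computation $\frac{d}{d\theta}\mcH(t,y,v^{-1}((x-\theta y)/t))$, and simply declares at the outset that the derivatives of $v^{-1}$ are bounded and omitted, whereas you are (commendably) more explicit about why $\mathbb{B}(q(s))$ stays uniformly bounded via the compactness guaranteed by $x+\Gamma_{\mcR(t)}\subset\Gamma_t$.
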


\begin{proof}
As $v^{-1}(q)$ and its derivatives are uniformly bounded for $|q| < 1$, we omit their contribution below.
Estimating the given integral, we find
\begin{align*}
& \int_{|y| \lesssim \mcR(t)} \left | \mcH \left (t,y, v^{-1}\left ( \frac{x-y}{t} \right ) \right ) - \mcH \left (t,y,v^{-1}\left (\frac{x}{t} \right ) \right) \right | \ dy \\
& \qquad = \int_{|y| \lesssim \mcR(t)}   \left | \int_0^1 \frac{d}{d\theta} \left [\mcH \left (t, y, v^{-1} \left ( \frac{x-\theta y}{t}  \right ) \right ) \right ] d\theta \right | dy\\
& \qquad  \lesssim t^{-1}\Vert \nabla_p \mcH(t) \Vert_\infty \int_{|y| \lesssim \mcR(t)} |y|  \ dy \\
&  \qquad \lesssim t^{-1} \mcR(t)^4\Vert \nabla_p \mcH(t) \Vert_\infty,
\end{align*}
and the proof is complete.
\end{proof}

\section{Lower Regularity Estimates}
\label{sec:lower}

Within this section, we obtain estimates on first derivatives of the particle distribution and fields, as well as, characteristics and the charge and current densities.
Because Theorem \ref{GS} provides $C^1$ solutions, higher (i.e., $C^2$) regularity is not needed for the estimates within this section.

\subsection{Characteristics}
\label{sect:chars}

\begin{lemma}
\label{LDchar}
For $\tau$ sufficiently large, $t \geq \tau$, and $(x, p) \in \bS_f(\tau)$, we have
$$\left | \frac{\partial \mcX}{\partial p}(t,\tau, x, p) \right | \lesssim t \qquad \mathrm{and} \qquad \left | \frac{\partial \mcP}{\partial p}(t,\tau, x, p) \right | \lesssim 1.$$
\end{lemma}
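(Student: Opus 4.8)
The plan is to differentiate the characteristic system \eqref{char} in $p$ and set up a Gr\"onwall-type estimate for the matrix-valued quantities
$$\Xi(t) := \frac{\partial \mcX}{\partial p}(t,\tau,x,p), \qquad \Pi(t) := \frac{\partial \mcP}{\partial p}(t,\tau,x,p).$$
Differentiating the characteristic ODEs and using $\dot{\mcX} = v(\mcP)$, $\dot{\mcP} = K(t,\mcX,\mcP)$, one obtains the linear variational system
\begin{align*}
\dot\Xi &= \mathbb{A}(\mcP)\,\Pi,\\
\dot\Pi &= \nabla_x K(t,\mcX,\mcP)\,\Xi + \nabla_p K(t,\mcX,\mcP)\,\Pi,
\end{align*}
with initial data $\Xi(\tau) = 0$, $\Pi(\tau) = I$ (since $\mcX(\tau,\tau,x,p)=x$, $\mcP(\tau,\tau,x,p)=p$). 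Here $\nabla_p K(t,x,p) = \mathbb{A}(p)\times B(t,x)$ (a bounded-in-$p$ factor times the field), and $\nabla_x K(t,x,p) = \nabla_x E(t,x) + v(p)\times\nabla_x B(t,x)$. Since $(x,p)\in\bS_f(\tau)$ and $\tau\in[0,t]$, the arguments $(\mcX(t,\tau,x,p),\mcP(t,\tau,x,p))$ lie along characteristics issuing from the support of $f$, so Lemma \ref{LPrelim} applies: $|K(t,\mcX,\mcP)| \le \mcK_0(t) \lesssim t^{-2}$ and $|\nabla_x K(t,\mcX,\mcP)| \lesssim \mcK_1(t) \lesssim t^{-3}\ln t$, while $\mathbb{A}$ and $v$ are bounded by Lemma \ref{lem:v}.

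The key step is then a Gr\"onwall argument on $\Pi$. From the second equation,
$$|\Pi(t)| \le 1 + \int_\tau^t \big( C s^{-3}\ln s\,|\Xi(s)| + C s^{-2}|\Pi(s)| \big)\,ds.$$
To close this I need control of $|\Xi(s)|$ in terms of $|\Pi|$, which the first equation supplies: $|\Xi(s)| \le \int_\tau^s |\mathbb{A}(\mcP)|\,|\Pi(r)|\,dr \le C\int_\tau^s |\Pi(r)|\,dr$. Substituting, the kernel $s^{-3}\ln s\cdot s$ (after the innermost integration produces an extra factor $\lesssim s$) together with $s^{-2}$ is integrable on $[\tau,\infty)$ for $\tau$ large, so Gr\"onwall's inequality yields $|\Pi(t)| \lesssim 1$ uniformly in $t \ge \tau$. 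Feeding this back into $|\Xi(t)| \le C\int_\tau^t |\Pi(r)|\,dr \lesssim t$ gives the claimed $|\partial_p\mcX|\lesssim t$. A minor technical point: one must first argue that $\Xi,\Pi$ remain finite on compact time intervals (standard, from smoothness of the flow and the $C^1$ regularity of $K$ guaranteed by Theorem \ref{GS}), so that the integral inequalities make sense before the asymptotic Gr\"onwall estimate takes over.

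The main obstacle is making the coupled Gr\"onwall estimate genuinely closed: the field derivative $\nabla_x K$ only decays like $t^{-3}\ln t$, and it multiplies $\Xi$, which grows linearly; one must check that the combined contribution $\int^t s^{-3}\ln s\,|\Xi(s)|\,ds$ does not feed back destructively. Because $|\Xi(s)| \lesssim s$ a priori would give an integrand $\lesssim s^{-2}\ln s$, which is integrable, the feedback is harmless, but the argument has to be organized so the linear growth of $\Xi$ is used as an (eventually self-consistent) input rather than assumed — e.g. by running Gr\"onwall on the single quantity $\Phi(t) := |\Pi(t)| + t^{-1}|\Xi(t)|$ or by a bootstrap on a finite interval $[\tau,T]$ with constants independent of $T$. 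The restriction to $\tau$ sufficiently large is exactly what makes the tail integrals $\int_\tau^\infty s^{-2}\,ds$ and $\int_\tau^\infty s^{-2}\ln s\,ds$ small enough to absorb into the Gr\"onwall constant.
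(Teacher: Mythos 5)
Your proposal is correct and follows essentially the same route as the paper: differentiate the characteristic system in $p$ to obtain the variational ODEs for $\Xi=\partial_p\mcX$ and $\Pi=\partial_p\mcP$, observe that $\mcK_0(s)\lesssim s^{-2}$ and $\mcK_1(s)\lesssim s^{-3}\ln s$ on the support-adapted characteristics, and close via a bootstrap on the pair $\bigl(\left|\Pi(t)\right|,\,t^{-1}\left|\Xi(t)\right|\bigr)$ using the fact that $s\cdot\mcK_1(s)$ and $\mcK_0(s)$ are integrable for $\tau$ large. The paper implements exactly your second suggestion — a continuity/bootstrap argument on $T_0(\tau)=\sup\{T\geq\tau:\left|\Xi(s)\right|\leq\delta(s-\tau),\,\left|\Pi(s)\right|\leq\delta\ \text{on}\ [\tau,T]\}$ with $\delta$ fixed — and shows the bounds self-improve for $\tau$ sufficiently large, so $T_0=\infty$.
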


\begin{proof}
Taking a $p$ derivative in \eqref{char} yields
\begin{equation*}
\left \{
\begin{aligned}
& \frac{\partial \dot{\mathcal{X}}}{\partial p}(t)= \mathbb{A}(\mcP(t)) \frac{\partial \mathcal{P}}{\partial p}(t),\\
& \frac{\partial \dot{\mathcal{P}}}{\partial p}(t)= \nabla_x K(t, \mcX(t), \mcP(t)) \frac{\partial \mcX}{\partial p}(t) + \nabla_p K(t, \mcX(t), \mcP(t)) \frac{\partial \mcP}{\partial p}(t),\\
& \frac{\partial \mcX}{\partial p}(\tau) = 0 ,\qquad \frac{\partial \mcP}{\partial p}(\tau) = \mathbb{I}.
\end{aligned}
\right.
\end{equation*}
Upon integrating, we can rewrite the latter ODE as
$$\frac{\partial \mcP}{\partial p}(t) = \mathbb{I} +\int_\tau^t \left ( \nabla_x K(s, \mcX(s), \mcP(s)) \frac{\partial \mcX}{\partial p}(s) + \nabla_p K(s, \mcX(s), \mcP(s)) \frac{\partial \mcP}{\partial p}(s) \right ) ds$$
so that by Lemmas \ref{lem:v} and \ref{LPrelim} we have 
$$\left |\frac{\partial \mcP}{\partial p}(t) \right | \leq 1 + \int_\tau^t \left ( \mcK_1(s) \left | \frac{\partial \mcX}{\partial p}(s) \right | +  \mcK_0(s) \left | \frac{\partial \mcP}{\partial p}(s) \right | \right ) ds \leq 1 + C\int_\tau^t \left (s^{-a} \left | \frac{\partial \mcX}{\partial p}(s) \right | + s^{-2} \left | \frac{\partial \mcP}{\partial p}(s) \right | \right ) ds$$
for any $a \in (2,3)$.
Now, we fix some $\delta \in [4,6]$ and define
$$T_0(\tau) = \sup \left \{ T \geq \tau : \left | \frac{\partial \mcX}{\partial p}(s) \right | \leq \delta (s - \tau) \ \mathrm{and} \   \left | \frac{\partial \mcP}{\partial p}(s) \right | \leq \delta \ \mathrm{for \ all} \ s \in [\tau, T] \right \}.$$
Note that $T_0 > \tau$ due to the initial conditions. 
Then, estimating for $t \in [\tau, T_0)$, we have
$$\left |\frac{\partial \mcP}{\partial p}(t) \right | \leq  1 + C\delta \int_\tau^t \left [ (s-\tau) s^{-a} + s^{-2} \right ] ds \leq 1 + C\delta \left ( \tau^{2-a} + \tau^{-1} \right ) \leq 2 < \frac{1}{2}\delta$$
for $\tau$ sufficiently large. 
Similarly, we integrate the first ODE to find
$$\left |\frac{\partial \mcX}{\partial p}(t) \right | \leq \int_\tau^t  \left | \frac{\partial \mcP}{\partial p}(s) \right |  \ ds < \frac{1}{2}\delta (t - \tau)$$
for $\tau$ sufficiently large. 
Hence, we find $T_0 =\infty$ and the estimate on $p$-derivatives follows.
\end{proof}

Because the fields decay rapidly in time inside the light cone, we can immediately establish the limiting behavior of the momentum characteristics. 

\begin{lemma}
\label{L6}
For any $\tau\geq 0$ and $(x, p) \in \mcS_f(\tau)$, 
the limiting momenta $\mcP_\infty$ defined by
$$\mcP_\infty(\tau, x, p) :=  \lim_{t \to \infty} \mcP(t, \tau, x, p) = p + \int_\tau^\infty K(s, \mcX(s, \tau, x, p), \mcP(s, \tau, x, p)) ds$$ 
exist, and are $C^2$, bounded, and invariant under the characteristic flow, namely $\mcP_\infty$ satisfies
$$\mcP_\infty(t, \mcX(t, \tau, x,p), \mcP(t, \tau, x, p)) = \mcP_\infty(\tau, x,p)$$
for any $t \geq 0$.
Finally, we have the convergence estimate
$$|\mcP(t, \tau, x, p) - \mcP_\infty(\tau, x, p) | \lesssim \int_t^\infty \mcK_0(s) \ ds,$$
which further yields
\begin{equation}
\label{Winfest}
|\mcP(t, \tau, x, p) - \mcP_\infty(\tau, x, p) | \lesssim t^{-1}.
\end{equation}
\end{lemma}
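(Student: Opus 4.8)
The plan is to extract everything from the characteristic identity $\dot{\mcP}(s,\tau,x,p) = K\bigl(s,\mcX(s,\tau,x,p),\mcP(s,\tau,x,p)\bigr)$. Integrating from $\tau$ to $t$ gives
$$\mcP(t,\tau,x,p) = p + \int_\tau^t K\bigl(s,\mcX(s,\tau,x,p),\mcP(s,\tau,x,p)\bigr)\,ds.$$
Since $f$ is constant along characteristics, $\bigl(\mcX(s,\tau,x,p),\mcP(s,\tau,x,p)\bigr)\in\mcS_f(s)$ for all $s\geq\tau$ whenever $(x,p)\in\mcS_f(\tau)$, so $|K(s,\mcX(s),\mcP(s))|\leq |E(s,\mcX(s))|+|B(s,\mcX(s))|\leq\mcK_0(s)$ because $|v|<1$, and Lemma \ref{LPrelim} yields $\mcK_0(s)\lesssim s^{-2}$. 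Hence the integrand is absolutely integrable on $[\tau,\infty)$, the limit $\mcP_\infty(\tau,x,p)$ exists and equals $p+\int_\tau^\infty K\,ds$, and subtracting the two representations gives $|\mcP(t,\tau,x,p)-\mcP_\infty(\tau,x,p)|\leq\int_t^\infty\mcK_0(s)\,ds\lesssim\int_t^\infty s^{-2}\,ds\lesssim t^{-1}$, which is the claimed convergence rate.

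Boundedness and flow invariance are then immediate. Theorem \ref{GS} forces $\mcP(t,\tau,x,p)\in\overline{\Gamma}_\beta$ for $(x,p)\in\mcS_f(\tau)$, so $\mcP_\infty(\tau,x,p)\in\overline{\Gamma}_\beta$ as well. For invariance, uniqueness of solutions to \eqref{char} gives the cocycle identity $\mcP\bigl(t',t,\mcX(t,\tau,x,p),\mcP(t,\tau,x,p)\bigr)=\mcP(t',\tau,x,p)$ for all $t'\geq t\geq\tau$; letting $t'\to\infty$ and invoking the first paragraph yields $\mcP_\infty\bigl(t,\mcX(t,\tau,x,p),\mcP(t,\tau,x,p)\bigr)=\mcP_\infty(\tau,x,p)$.

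The substantive point is $C^2$ regularity of $(\tau,x,p)\mapsto\mcP_\infty(\tau,x,p)$, which I would obtain by differentiating the identity $\mcP_\infty=p+\int_\tau^\infty K(s,\mcX(s),\mcP(s))\,ds$ under the integral sign, once and then twice, and checking that each differentiated integrand is dominated — uniformly on $\mcS_f(\tau)$ — by a function of $s$ integrable at infinity. Writing $\partial$ for a generic first-order derivative in $(x,p,\tau)$, the differentiated integrand is $\nabla_x K\cdot\partial\mcX+\nabla_p K\cdot\partial\mcP$; by Lemma \ref{LDchar} together with the entirely analogous $x$- and $\tau$-variation estimates (the same Gronwall / continuity-in-time bootstrap) one has $|\partial\mcX(s)|\lesssim s$ and $|\partial\mcP(s)|\lesssim 1$, while $|\nabla_x K|\leq|\nabla_x E|+|\nabla_x B|\leq\mcK_1(s)\lesssim s^{-3}\ln(s)$ and $|\nabla_p K|\lesssim|B(s,\mcX(s))|\lesssim s^{-2}$ on the support, so the integrand is $O(s^{-2}\ln s)$ and $C^1$ regularity follows, with $\partial_\tau\mcP_\infty=-K(\tau,x,p)+\int_\tau^\infty(\cdots)\,ds$. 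Differentiating \eqref{char} a second time produces linear variation equations for the second-order sensitivities with inhomogeneities quadratic in the first-order ones and coefficients built from $\nabla^2_x K$, $\nabla_x\nabla_p K$, $\nabla^2_p K$; the same scheme gives $|\partial^2\mcX(s)|\lesssim s$ and $|\partial^2\mcP(s)|\lesssim 1$, and the twice-differentiated integrand is then controlled by terms of the form $|\nabla^2_x E|\,|\partial\mcX|^2+|\nabla_x E|\,|\partial^2\mcX|+\cdots$, which, using $|\nabla^2_x E|+|\nabla^2_x B|\lesssim s^{-4}\ln s$ on the support (as furnished by Theorem \ref{T0}), is again $O(s^{-2}\ln s)$ and integrable. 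The main obstacle is exactly this last accounting: one must confirm that the polynomial-in-$s$ growth of the second-order characteristic sensitivities is beaten by the time decay of $\nabla^2 E$ and $\nabla^2 B$ on the support of $f$ — i.e.\ that the two powers of $t$ gained from staying strictly inside the light cone survive two differentiations — and, since Section \ref{sec:lower} nominally works with the $C^1$ solutions of Theorem \ref{GS}, the $C^2$ conclusion is to be understood via the $C^2$ solutions and second-derivative field estimates of Theorem \ref{T0}.
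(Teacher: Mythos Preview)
Your proposal is correct and follows essentially the same approach as the paper: integrate the characteristic ODE, use the $\mcK_0(s)\lesssim s^{-2}$ decay from Lemma~\ref{LPrelim} to get existence of the limit and the convergence rate, and read off boundedness and flow invariance from the support bound and the cocycle property. You actually supply more detail than the paper does on the $C^2$ claim (the paper simply asserts it ``due to the regularity of the electromagnetic fields and spatial characteristics''), and your observation that the full $C^2$ conclusion implicitly leans on the second-derivative field bounds of Theorem~\ref{T0} rather than Theorem~\ref{GS} alone is well taken.
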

\begin{proof}
For any $(x,p) \in \mcS_f(\tau)$, we have from the characteristic equations
$$\mcP(t,\tau, x, p) = p + \int_\tau^t K(s, \mcX(s, \tau, x, p), \mcP(s, \tau, x, p)) ds.$$
Thus, define
$$\mcP_\infty(\tau, x, p) = p+ \int_\tau^\infty K(s, \mcX(s, \tau, x, p), \mcP(s, \tau, x, p)) ds$$
for every $\tau \geq 0$ and $(x,p) \in \mcS_f(\tau)$. 
Then, the convergence estimate
$$| \mcP(t) -\mcP_\infty| \lesssim \int_t^\infty \mcK_0(s) \ ds \lesssim t^{-1}.$$ 
follows from Lemma \ref{LPrelim}.
Thus, the limit exists and is uniformly bounded for every $\tau \geq 0$ and $(x,p) \in \mcS_f(\tau)$.
Finally, $\mcP_\infty$ is $C^2$ due to the regularity of the electromagnetic fields and spatial characteristics and invariant under the flow due to the time-reversibility of \eqref{char}. 
 \end{proof}

Next, we control the derivatives of these limiting characteristics and show that they are invertible functions of $p$ for sufficiently large $\tau$. This property will be useful later to perform a change of variables and establish limits of spatial averages.

\begin{lemma}
\label{Vinfinvert}
For $(x,p) \in \mcS_f(\tau)$ we have
$$\left |\frac{\partial \mcP_\infty}{\partial p}(\tau, x, p) - \mathbb{I} \right | \lesssim \tau^{-1}\ln(\tau).$$
Thus, there is $T_1 > 0$ such that for all $\tau \geq T_1$ and $(x, p) \in \mcS_f(\tau)$, we have
$$\left |\det \left (\frac{\partial \mcP_\infty}{\partial p}(\tau, x, p) \right ) \right | \geq \frac{1}{2}.$$
Consequently, for $\tau \geq T_1$ and $(x, p) \in \mcS_f(\tau)$, the $C^2$ mapping $p \mapsto \mcP_\infty(\tau, x, p)$ is injective and invertible.
\end{lemma}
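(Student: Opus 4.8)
The plan is to differentiate the integral representation of $\mcP_\infty$ furnished by Lemma~\ref{L6}, namely
$$\mcP_\infty(\tau,x,p) = p + \int_\tau^\infty K\bigl(s,\mcX(s,\tau,x,p),\mcP(s,\tau,x,p)\bigr)\,ds,$$
with respect to $p$, and to estimate the resulting integrand using the characteristic bounds already in hand. By the chain rule the $p$-derivative of the integrand is $\nabla_x K(s,\mcX(s),\mcP(s))\tfrac{\partial\mcX}{\partial p}(s) + \nabla_p K(s,\mcX(s),\mcP(s))\tfrac{\partial\mcP}{\partial p}(s)$; since $K(t,x,p)=E(t,x)+v(p)\times B(t,x)$, Lemma~\ref{lem:v} (for $|\mathbb{A}|\le 1$) and Lemma~\ref{LPrelim} give $|\nabla_x K(s,\mcX(s),\mcP(s))|\lesssim\mcK_1(s)\lesssim s^{-3}\ln s$ and $|\nabla_p K(s,\mcX(s),\mcP(s))|\lesssim\mcK_0(s)\lesssim s^{-2}$, while Lemma~\ref{LDchar} gives $|\tfrac{\partial\mcX}{\partial p}(s)|\lesssim s$ and $|\tfrac{\partial\mcP}{\partial p}(s)|\lesssim 1$. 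These bounds are integrable in $s$ and uniform over $(x,p)$ on the support, which justifies differentiation under the integral sign and yields
$$\Bigl|\frac{\partial\mcP_\infty}{\partial p}(\tau,x,p)-\mathbb{I}\Bigr| \lesssim \int_\tau^\infty\bigl(s^{-2}\ln s + s^{-2}\bigr)\,ds \lesssim \tau^{-1}\ln\tau,$$
the logarithmic term handled by a single integration by parts. This computation is the bulk of the lemma.

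The determinant bound is then immediate: since $M\mapsto\det M$ is Lipschitz near $\mathbb{I}$, the estimate above gives $\bigl|\det\bigl(\tfrac{\partial\mcP_\infty}{\partial p}\bigr)-1\bigr|\lesssim\tau^{-1}\ln\tau$, so choosing $T_1$ large enough forces $\bigl|\det\bigl(\tfrac{\partial\mcP_\infty}{\partial p}(\tau,x,p)\bigr)\bigr|\ge\tfrac12$ for all $\tau\ge T_1$ and $(x,p)\in\mcS_f(\tau)$. For injectivity I would use the same estimate in the quantitative form $\bigl|\tfrac{\partial\mcP_\infty}{\partial p}(\tau,x,\cdot)-\mathbb{I}\bigr|\le\tfrac12$ for $\tau\ge T_1$: for $p_1\ne p_2$ with $(x,p_i)\in\mcS_f(\tau)$, writing
$$\mcP_\infty(\tau,x,p_1)-\mcP_\infty(\tau,x,p_2) = (p_1-p_2) + \int_0^1\Bigl(\tfrac{\partial\mcP_\infty}{\partial p}\bigl(\tau,x,p_2+\theta(p_1-p_2)\bigr)-\mathbb{I}\Bigr)(p_1-p_2)\,d\theta$$
gives $\bigl|\mcP_\infty(\tau,x,p_1)-\mcP_\infty(\tau,x,p_2)\bigr|\ge\tfrac12|p_1-p_2|>0$; combined with the non-vanishing Jacobian (inverse function theorem) this makes $p\mapsto\mcP_\infty(\tau,x,p)$ a $C^2$ diffeomorphism onto its image.

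The point requiring care — and the main obstacle — is that this mean-value argument needs the Jacobian estimate to hold at the intermediate points $(x,p_2+\theta(p_1-p_2))$, which need not lie in $\mcS_f(\tau)$. This is resolved by noting that $\mcS_f(\tau)\subseteq\overline\Gamma_{L+\zeta\tau}\times\overline\Gamma_\beta$ by Theorem~\ref{GS} and Lemma~\ref{LPrelim}, so the entire segment stays in the convex set $\overline\Gamma_{L+\zeta\tau}\times\overline\Gamma_\beta$; one then checks that the bootstrap arguments behind Lemmas~\ref{LPrelim} and~\ref{LDchar} (run forward from time $\tau$) remain valid for all characteristics emanating from this set provided $\tau$ is large, since $|x|\lesssim\tau$ and $|p|\le\beta$ still force $|\mcX(s)|\lesssim\gamma' s$ for some $\gamma'<1$ and hence $\mcK_0(s)\lesssim s^{-2}$, $\mcK_1(s)\lesssim s^{-3}\ln s$ along them. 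With $\mcP_\infty$ and its $p$-derivative thereby controlled on a convex neighborhood of each $p$-slice of the support, the displayed estimates extend to the intermediate points and the argument closes. Verifying this uniform extension, together with the (routine) domination needed to differentiate under the integral sign, is the only real work beyond the estimate itself.
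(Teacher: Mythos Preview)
Your proof is correct and follows essentially the same approach as the paper: differentiate the integral representation of $\mcP_\infty$, bound the integrand via Lemmas~\ref{LPrelim} and~\ref{LDchar} to get $\int_\tau^\infty s^{-2}(1+\ln s)\,ds\lesssim\tau^{-1}\ln\tau$, and invoke continuity of the determinant. You are in fact more careful than the paper on the injectivity conclusion---the paper simply asserts it from the Jacobian lower bound, whereas you supply the mean-value argument and correctly flag (and resolve) the issue that intermediate points may leave $\mcS_f(\tau)$; this extra care is warranted, since a nonvanishing Jacobian alone yields only local invertibility.
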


\begin{proof}
First, note that the limiting momenta given by Lemma \ref{L6} satisfy 
$$ \frac{\partial \mcP_\infty}{\partial p}(\tau, x, p) = \mathbb{I} + \int_\tau^\infty \left ( \nabla_x K(s, \mcX(s), \mcP(s))  \frac{\partial \mcX}{\partial p}(s) + \nabla_p K(s, \mcX(s), \mcP(s))  \frac{\partial \mcP}{\partial p}(s) \right ) \ ds. $$
Hence, using Lemmas \ref{LPrelim} and \ref{LDchar}, we have
$$\left | \frac{\partial \mcP_\infty}{\partial p}(\tau) - \mathbb{I} \right | \lesssim \int_\tau^\infty s^{-2} (1 + \ln(s)) ds \lesssim \tau^{-1}\ln(\tau).$$
Therefore, by the continuity of the mapping $A \mapsto \det(A)$, there is $T_1 > 0$ such that for all $\tau \geq T_1$ and $(x, p) \in \bS_f(\tau)$, we have
$$ \left | \det \left (\frac{\partial \mcP_\infty}{\partial p}(\tau, x, p)  \right )\right | \geq \frac{1}{2}.$$
\end{proof}

Next, we define the collection of all limiting momenta on $\bS_f(t)$, which will serve as the support of the limiting spatial average.
Notice that due to its invariance under the flow defined by \eqref{char}, we have
\begin{equation}\label{inv-p}\left \{ \mcP_\infty(\tau, x, p) : (x, p) \in \bS_f(\tau) \right \} = \left \{ \mcP_\infty(0, x, p) : (x, p) \in \bS_f(0) \right \}\end{equation}
for all $\tau \geq 0$. Hence, define
$$\Omv := \left \{ \mcP_\infty(0, x, p) : (x, p) \in \bS_f(0) \right \}.$$
As $\mcP_\infty (0,x, p)$ is continuous due to Lemma \ref{L6}, its range $\Omv$ on the compact set $\bS_f(0)$ is also compact.

\subsection{Properties of the Translated Distribution Function}

With the basic properties of characteristics determined, we introduce some notation relating to the translated distribution functions.
As mentioned in the introduction, we let
$$g(t,x,p) = f(t,x+v(p)t, p)$$
so that 
$g(t,x-v(p)t,p) = f(t,x, p)$,
and because the translation alters the spatial characteristics of the system, we further define
\begin{equation}
\label{gcharalt}
\mcY(t,\tau, x,p) = \mcX(t,\tau, x, p) - v \bigg (\mcP(t, \tau, x, p) \bigg ) t
\end{equation}
so that
\begin{equation}
\label{gchar}
 \dot{\mcY}(t) = -t \mathbb{A}(\mcP(t)) K \biggl (t, \mcY(t) +  v\left (\mcP(t) \right ) t, \mcP(t) \biggr)
\end{equation}
with $\mcY(\tau) = x - v(p)\tau$. 
In addition, note that $\Vert g(t) \Vert_\infty \leq \Vert f_0 \Vert_\infty$ for all $t \geq 0$ and
$$g(t,x,p) = 0 \qquad \mathrm{for} \qquad |p| \geq \beta.$$
For $t \geq 0$ define the support of $g(t)$ by
$$\bS_g(t) = \overline{\left \{ (x,p) \in \R^6 : g(t,x,p) \neq 0 \right \}}$$
and note that $\bS_g(0) = \bS_f(0)$.

Because our approach relies upon the growth of the spatial support and momentum derivatives of $g$,
we further define 
%
$$\mcR(t) = \sup \{|\mcY(t,0,x,p)| : (x,p) \in \bS_g(0) \}$$
and let 
$$\mcG_1(t) = 1 + \| \nabla_p g (t) \|_\infty$$
and
$$\mcG_2(t) = \mcG_1(t) +  \| \nabla_p^2 g (t) \|_\infty.$$

With suitable decay of the field and its derivatives established in Section \ref{sec:prelim}, we now study the behavior of the translated characteristics of \eqref{RVM}.

\begin{lemma}
\label{Xsupp}
For every $\tau \geq 0$, and $(x,p) \in \mcS_g(\tau)$ the characteristics satisfy
$$\left | \mcY(t, \tau, x, p)  - (x -v(p)\tau) \right | \lesssim \int_\tau^t s \mcK_0(s) ds$$
and 
$$\mcR(t) \lesssim 1 + \int_0^t s \mcK_0(s) ds.$$
In particular, Lemma \ref{LPrelim} further implies
$$\left | \mcY(t, \tau, x, p) \right | \lesssim \ln(t) \qquad \mathrm{and} \qquad 
\mcR(t) \lesssim \ln(t).$$
\end{lemma}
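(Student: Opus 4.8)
The plan is to integrate the ODE \eqref{gchar} for $\mcY$ directly and control the resulting integral using the decay of the fields on the spatial support of the translated distribution, which is exactly what $\mcK_0(t)$ measures. First I would observe that by definition \eqref{gcharalt}, $(x,p) \in \bS_g(\tau)$ means $(x + v(p)\tau, p) \in \bS_f(\tau)$, so the point $\mcX(t,\tau,x,p)$ at which the field is evaluated in \eqref{gchar} lies along a characteristic of $f$ emanating from $\bS_f(\tau)$; hence by the definition of $\mcK_0$ we have $|K(s, \mcY(s) + v(\mcP(s))s, \mcP(s))| = |K(s,\mcX(s,\tau,x,p),\mcP(s))| \le \mcK_0(s)$ for all $s \in [\tau,t]$. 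Combining this with $\|\mathbb{A}(p)\| \lesssim 1$ from Lemma \ref{lem:v}, the integral form of \eqref{gchar},
$$\mcY(t,\tau,x,p) - (x - v(p)\tau) = -\int_\tau^t s\, \mathbb{A}(\mcP(s))\, K\bigl(s,\mcX(s,\tau,x,p),\mcP(s)\bigr)\, ds,$$
immediately yields $|\mcY(t,\tau,x,p) - (x-v(p)\tau)| \lesssim \int_\tau^t s\,\mcK_0(s)\, ds$, which is the first claim.

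Next I would take the supremum over $(x,p) \in \bS_g(0)$ with $\tau = 0$. Since $\bS_g(0) = \bS_f(0)$ is compact, $|x - v(p)\cdot 0| = |x|$ is bounded by a constant there, so $\mcR(t) = \sup\{|\mcY(t,0,x,p)|\} \lesssim 1 + \int_0^t s\,\mcK_0(s)\, ds$, which is the second claim. For the final two estimates I would invoke the bound $\mcK_0(s) \lesssim s^{-2}$ from Lemma \ref{LPrelim}. Then $\int_\tau^t s\,\mcK_0(s)\, ds \lesssim \int_\tau^t s^{-1}\, ds \lesssim \ln(t)$ (and likewise $1 + \int_0^t s\,\mcK_0(s)\,ds \lesssim \ln(t)$, handling the integrability near $s=0$ trivially since the integrand is bounded there), giving both $|\mcY(t,\tau,x,p)| \lesssim \ln(t)$ and $\mcR(t) \lesssim \ln(t)$.

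The only subtlety — and the one point that deserves care rather than a routine calculation — is verifying the hypothesis needed to apply the $\mcK_0$ bound: one must confirm that the characteristics appearing in \eqref{gchar} indeed stay inside the light cone, so that Lemma \ref{LPrelim}'s estimate $\mcK_0(s) \lesssim s^{-2}$ genuinely applies along them. This is handled by the observation above that $\mcX(s,\tau,x,p)$ starts in $\bS_f(\tau)$, together with the definition of $\mcK_0$, which takes the supremum precisely over $\tau \in [0,s]$ and $(x,p) \in \bS_f(\tau)$; so no separate argument about the location of $\mcY$ is circular — the field decay is controlled through the $\mcX$-characteristics, not through $\mcY$ itself. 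Once this is noted, the rest is a direct Grönwall-free integration, since $\mcY$ does not appear on the right-hand side after bounding $K$ by $\mcK_0$.
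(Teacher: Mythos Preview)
Your proposal is correct and follows essentially the same approach as the paper: bound $|\dot{\mcY}(s)| \lesssim s\,\mcK_0(s)$ using the ODE \eqref{gchar} together with $\|\mathbb{A}\|\lesssim 1$, integrate to get the first estimate, take the supremum over the (compact) initial support to obtain the bound on $\mcR(t)$, and then insert $\mcK_0(s)\lesssim s^{-2}$ from Lemma~\ref{LPrelim} to conclude the logarithmic growth. Your added justification that the field along $\mcX(s,\tau,x,p)$ is controlled by $\mcK_0(s)$ via the definition of $\mcK_0$ is exactly the point the paper uses implicitly, and your remark that no Gr\"onwall-type closure is needed is accurate.
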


\begin{proof}
Using \eqref{gchar} we immediately find
$$\left | \dot{\mcY}(t) \right |  \leq t |\mathbb{A}(v(\mcP(t)))| |K(t,\mcX(t), \mcP(t))| \lesssim t \mcK_0(t),$$
and thus
$$\left | \mcY(t, \tau, x, p)  - (x - v(p) \tau) \right | \leq \int_\tau^t \left |  \dot{\mcY}(s, \tau, x, p)  \right | \ ds \lesssim \int_\tau^t s \mcK_0(s) \ ds$$
for fixed $\tau \geq 0$ and $(x,p) \in \bS_g(\tau)$.
Furthermore, this implies
$$\left | \mcY(t, 0, x, p) \right | \lesssim  |x| + \int_0^t s |K(s, \mcX(s), \mcP(s))| \ ds \lesssim 1 + \int_0^t s \mcK_0(s) \ ds$$
for $(x,p) \in \bS_g(0)$.
The estimate on the spatial radius then follows as
$$\mcR(t) = \sup_{(x,p) \in \bS_g(0)} \left | \mcY(t, 0, x, p) \right |  \lesssim 1 + \int_0^t s \mcK_0(s) \ ds,$$
which by Lemma \ref{LPrelim} implies the stated estimate.
\end{proof}

This growth estimate of the spatial support of $g$ will prove useful later on when determining the asymptotic behavior of the Lorentz force, as we need only consider spatial values within the support of $g$, which satisfy $|x| \lesssim \ln(t)$, to derive such estimates.
Next, we show that momentum derivatives of $g$ grow more slowly than those of $f$, which are established by Theorem \ref{GS}. 
\begin{lemma}
\label{Dvg}
We have
$$\mcG_1(t) \lesssim 1 + \int_1^t \left (s \mcK_0(s) + s^2 \mcK_1(s)  \right ) \ ds,$$
and thus by Lemma \ref{LPrelim}
$$\mcG_1(t) \lesssim \ln^2(t).$$
\end{lemma}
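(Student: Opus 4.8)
The strategy is to differentiate the transformed Vlasov equation \eqref{VMg} with respect to $p$ along its characteristics $\mcY$, obtain a Gr\"onwall-type integral inequality for $\nabla_p g$, and close it using the field decay already recorded in $\mcK_0$ and $\mcK_1$ via Lemma~\ref{LPrelim}. Recall that $g(t,x,p)=f(t,x+v(p)t,p)$ is constant along the flow $t\mapsto(\mcY(t,\tau,x,p),\mcP(t,\tau,x,p))$, where $\mcY$ solves \eqref{gchar}. Hence $g(t,\mcY(t,0,x,p),\mcP(t,0,x,p)) = f_0(x,p)$ for all $t$, and differentiating this identity in the initial momentum $p$ yields
\[
\nabla_p g(t,\mcY(t),\mcP(t)) \cdot \frac{\partial \mcP}{\partial p}(t) + \nabla_x g(t,\mcY(t),\mcP(t)) \cdot \frac{\partial \mcY}{\partial p}(t) = \nabla_p f_0(x,p).
\]
Since $\left|\frac{\partial\mcP}{\partial p}(t)\right| \lesssim 1$ (Lemma~\ref{LDchar}) and $\det\frac{\partial\mcP}{\partial p}(t)$ stays bounded away from zero by the argument in the proof of that lemma, solving for $\nabla_p g$ gives
\[
\|\nabla_p g(t)\|_\infty \lesssim 1 + \|\nabla_x g(t)\|_\infty \sup_{(x,p)\in\bS_g(0)}\left|\tfrac{\partial \mcY}{\partial p}(t,0,x,p)\right|.
\]
From Theorem~\ref{GS} (equivalently the first-derivative bound $\|\nabla_x f^\alpha(t)\|_\infty \le C$), we have $\|\nabla_x g(t)\|_\infty \sim 1$, so the whole matter reduces to estimating $\frac{\partial\mcY}{\partial p}(t,0,x,p)$.

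For that, differentiate \eqref{gchar} in $p$. Using $\dot{\mcY}(t) = -t\,\mathbb{A}(\mcP(t))K(t,\mcX(t),\mcP(t))$ and the chain rule, the variation $\frac{\partial\mcY}{\partial p}(t)$ satisfies a linear ODE whose right-hand side is controlled by $t|\nabla_xK(t,\mcX(t),\mcP(t))|\bigl|\frac{\partial\mcX}{\partial p}(t)\bigr| + t|\nabla_pK|\bigl|\frac{\partial\mcP}{\partial p}(t)\bigr|$ plus lower-order terms from differentiating $\mathbb{A}(\mcP)$. Invoking Lemma~\ref{lem:v} (boundedness of $\mathbb{A}$ and its derivatives on the compact momentum support), Lemma~\ref{LDchar} ($\bigl|\frac{\partial\mcX}{\partial p}\bigr|\lesssim t$, $\bigl|\frac{\partial\mcP}{\partial p}\bigr|\lesssim 1$), and the definitions of $\mcK_0,\mcK_1$, one gets
\[
\left|\frac{\partial\mcY}{\partial p}(t,0,x,p)\right| \lesssim 1 + \int_1^t\Bigl( s\,\mcK_0(s) + s\cdot s\cdot\mcK_1(s) + s\,\mcK_0(s)\Bigr)\,ds \lesssim 1 + \int_1^t\bigl(s\,\mcK_0(s) + s^2\,\mcK_1(s)\bigr)\,ds.
\]
Combining this with the display above for $\|\nabla_p g(t)\|_\infty$ and adding $1$ gives exactly $\mcG_1(t)\lesssim 1 + \int_1^t\bigl(s\mcK_0(s)+s^2\mcK_1(s)\bigr)ds$. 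Finally, Lemma~\ref{LPrelim} gives $\mcK_0(s)\lesssim s^{-2}$ and $\mcK_1(s)\lesssim s^{-3}\ln(s)$, so the integrand is $\lesssim s^{-1} + s^{-1}\ln(s) \lesssim s^{-1}\ln(s)$, whose integral is $\lesssim \ln^2(t)$, yielding $\mcG_1(t)\lesssim\ln^2(t)$.

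The main obstacle is the bookkeeping in the variational equation for $\frac{\partial\mcY}{\partial p}$: one must be careful that the factor of $t$ multiplying the Lorentz force in \eqref{gchar} does not destroy integrability. The point is that the worst term, $t\,\nabla_xK\cdot\frac{\partial\mcX}{\partial p}$, carries $|\nabla_xK|\lesssim\mcK_1(s)\lesssim s^{-3}\ln s$ and $\bigl|\frac{\partial\mcX}{\partial p}\bigr|\lesssim s$, so it is $\lesssim s^{-1}\ln s$ — borderline integrable, giving the $\ln^2(t)$ growth rather than a bounded result. An alternative bookkeeping, avoiding $\frac{\partial\mcX}{\partial p}$ altogether, is to write $\frac{\partial\mcY}{\partial p} = \frac{\partial\mcX}{\partial p} - \mathbb{A}(\mcP)\frac{\partial\mcP}{\partial p}\,t - \bigl(\nabla_p\mathbb{A}(\mcP)\cdot\frac{\partial\mcP}{\partial p}\bigr)\mcP\, $-type terms directly from \eqref{gcharalt}, but this reintroduces the $t$-growth of $\frac{\partial\mcX}{\partial p}$; the cleanest route really is the variational ODE for $\mcY$ itself, where the $t\mathbb{A}K$ structure keeps everything under control. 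Either way, no genuine difficulty arises beyond this careful accounting of powers of $s$.
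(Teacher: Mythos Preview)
Your approach is correct in outline but genuinely different from the paper's. The paper observes that $\partial_{p_k} g(t,x,p) = \bigl(t\,\mathbb{A}_{jk}(p)\,\partial_{x_j} f + \partial_{p_k} f\bigr)(t,x+v(p)t,p)$, applies the Vlasov operator $\mcV=\partial_t+v\cdot\nabla_x+K\cdot\nabla_p$ directly to the combination $t\,\mathbb{A}_{jk}\partial_{x_j} f + \partial_{p_k} f$, and integrates along the $(\mcX,\mcP)$-characteristics of $f$; the resulting inhomogeneous terms are then controlled using the a~priori Glassey--Strauss bounds $\|\nabla_x f(t)\|_\infty\lesssim 1$ and $\|\nabla_p f(t)\|_\infty\lesssim 1+t$ from Theorem~\ref{GS}. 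Your route instead differentiates the invariance $g(t,\mcY(t),\mcP(t))=f_0$, inverts $\partial\mcP/\partial p$, and reduces everything to an estimate on $\partial\mcY/\partial p$ via the variational equation for \eqref{gchar}. Both arrive at the same integral inequality, but the paper's argument is more self-contained: it never needs the invertibility of $\partial\mcP/\partial p$ and relies only on quantities already bounded in Theorem~\ref{GS}. Your approach, by contrast, imports Lemma~\ref{LDchar} and an additional ODE estimate. One caveat: Lemma~\ref{LDchar} is stated for $\tau$ sufficiently large, and the closeness of $\partial\mcP/\partial p$ to $\mathbb{I}$ (hence its invertibility) comes from that proof only when $\tau$ is large; since you take $\tau=0$, you should either start the characteristic identity from a fixed large $\tau_0$ (using $g(t,\mcY(t,\tau_0,\cdot),\mcP(t,\tau_0,\cdot))=g(\tau_0,\cdot)$ and absorbing the bounded $[0,\tau_0]$ contribution into the constant) or justify the $\tau=0$ bounds separately. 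With that adjustment your argument is complete.
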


\begin{proof}
To establish the result we estimate
$$\partial_{p_k} g(t,x,p) = \left ( t  \mathbb{A}_{jk} \partial_{x_j} f + \partial_{p_k} f \right )(t,x+v(p)t, p).$$
Applying the Vlasov operator 
$$\mcV := \partial_t + v(p) \cdot \nabla_x + K(t,x,p) \cdot \nabla_p$$
to the untranslated version of this quantity yields
$$ \mcV \biggl ( t  \mathbb{A}_{jk}(p) \partial_{x_j} f(t, x,p) +  \partial_{p_k} f(t, x, p)  \biggr )
= - t \mathbb{A}_{jk} \partial_{x_j} K \cdot \nabla_p f - \partial_{p_k} K \cdot \nabla_p f + t\partial_{x_j} f K \cdot \nabla_p \mathbb{A}_{jk},$$
and inverting gives
\begin{align*}
\left (t  \mathbb{A}_{jk}  \partial_{x_j} f + \partial_{p_k} f \right )(t,x,p) & = \partial_{p_k} f_0(\mcX(0),\mcP(0))- \int_0^t s \mathbb{A}_{jk}\left (\mcP(s) \right ) \partial_{x_j} K(s,\mcX(s), \mcP(s)) \cdot \nabla_p f(s, \mcX(s), \mcP(s)) ds\\
& \qquad - \int_0^t  \partial_{p_k} K(s,\mcX(s), \mcP(s)) \cdot \nabla_p f(s, \mcX(s), \mcP(s)) ds\\
& \qquad  + \int_0^t s\partial_{x_j} f(s, \mcX(s), \mcP(s))   K(s,\mcX(s), \mcP(s)) \cdot \nabla_p  \mathbb{A}_{jk}\left (\mcP(s) \right ) ds\\
\end{align*}
for $j, k = 1, 2, 3$.
From the estimates on the derivatives of $f$ within Theorem \ref{GS}, as well as, those of the fields and their derivatives within Lemma \ref{LPrelim}, this implies
\begin{eqnarray*}
\left | t  \mathbb{A}_{jk}(p) \partial_{x_k} f(t,x,p) + \partial_{p_k} f(t,x,p) \right | & \leq & \| \partial_{p_k} f_0\|_\infty + \int_0^t s \mcK_1(s) \|\nabla_p f(s)\|_\infty \ ds \\
& \ & + \int_0^t |B(s,\mcX(s)| \|\nabla_p f(s) \|_\infty ds + \int_0^t s \mcK_0(s) \|\nabla_x f(s) \|_\infty ds \\
& \lesssim & 1 + \int_0^t s^2 \mcK_1(s) ds + \int_0^t s \mcK_0(s) ds\\
& \lesssim & 1 + \int_1^t s^{-1} \left ( 1+ \ln(s) \right ) \ ds\\
& \lesssim & \ln^2(t)
\end{eqnarray*}
for $j, k = 1, 2, 3$. Thus, we conclude
$$ \| \nabla_p g(t) \|_\infty  \lesssim \ln^2(t),$$
which completes the lemma.
\end{proof}

With the behavior of momentum derivatives along the translated characteristics well-understood as $t \to \infty$, we can establish the limiting behavior of the spatial average.

\begin{lemma}
\label{Funif}
There exists $F_\infty \in C_c^2(\bfR^3)$ with $\mathrm{supp}(F_\infty) = \Omv$ such that
$$F(t,p) = \int f(t,x, p) \ dx = \int g(t,x, p) \ dx$$
satisfies $F(t,p) \to F_\infty(p)$ uniformly as $t \to \infty$
with
$$\| F(t) - F_\infty \|_\infty \lesssim \int_1^t \left (\mcK_0(s)\mcR(s)^3\mcG_1(s) + s \mcK_1(s)  \right ) \ ds \lesssim t^{-1}\ln^5(t).$$
In particular, the limit preserves particle number, i.e.
$$\int F_\infty(p) \ dp= \mcM.$$
\end{lemma}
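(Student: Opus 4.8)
The plan is to integrate the translated Vlasov equation \eqref{VMg} in $x$ and show that $\partial_t F(t,p)$ is integrable in time, which immediately yields uniform convergence to a limit $F_\infty(p)$. Starting from $g(t,x,p)=f(t,x+v(p)t,p)$, integrating \eqref{VMg} over $x\in\bfR^3$ kills the $\nabla_x g$ term (it is a perfect spatial divergence, since $\mathbb{A}(p)K$ does not depend on $x$ after the translation is frozen — more precisely, $t\mathbb{A}(p)K(t,x+v(p)t,p)\cdot\nabla_x g = t\,\nabla_x\cdot\bigl(\mathbb{A}(p)K(t,x+v(p)t,p)\,g\bigr)$ because $\nabla_x\cdot K = (\nabla\cdot E)(t,x+v(p)t)=\rho(t,x+v(p)t)$, which contributes a term of lower order that must be tracked — I will instead differentiate under the integral sign directly). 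Concretely, I would write
$$\partial_t F(t,p) = \int \partial_t g(t,x,p)\,dx = \int \Bigl( t\mathbb{A}(p)K(t,x+v(p)t,p)\cdot\nabla_x g - K(t,x+v(p)t,p)\cdot\nabla_p g\Bigr)dx,$$
and then integrate the first term by parts in $x$. Since $g$ has compact $x$-support (contained in a ball of radius $\mcR(t)\lesssim\ln t$ by Lemma \ref{Xsupp}), the boundary term vanishes, and one is left with
$$\partial_t F(t,p) = -\int_{|x|\lesssim\mcR(t)} \Bigl( t\,(\nabla_x\cdot K)(t,x+v(p)t,p)\,g + K(t,x+v(p)t,p)\cdot\nabla_p g\Bigr)dx.$$

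Now I would estimate each piece using the preliminary lemmas. The term $K\cdot\nabla_p g$ is bounded by $\mcK_0(t)\,\mcG_1(t)\cdot(\text{vol of }x\text{-support})\lesssim \mcK_0(t)\,\mcG_1(t)\,\mcR(t)^3 \lesssim t^{-2}\ln^2(t)\cdot\ln^3(t) = t^{-2}\ln^5(t)$. For the divergence term, $|\nabla_x\cdot K| \lesssim |\nabla_x E| + |\nabla_x B| \lesssim \mcK_1(t)$ on the relevant set, so $t\,|\nabla_x\cdot K|\,|g|$ integrated over the $x$-support is $\lesssim t\,\mcK_1(t)\,\mcR(t)^3\lesssim t\cdot t^{-3}\ln t\cdot\ln^3 t = t^{-2}\ln^4 t$. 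Combining, $\|\partial_t F(t)\|_\infty \lesssim \mcK_0(t)\mcR(t)^3\mcG_1(t) + t\,\mcK_1(t)\mcR(t)^3 \lesssim t^{-2}\ln^5 t$, which is integrable; integrating from $t$ to $\infty$ gives the stated bound $\|F(t)-F_\infty\|_\infty\lesssim\int_t^\infty(\cdots)\,ds\lesssim t^{-1}\ln^5 t$. (I'll need to double-check whether the paper intends $\mcR(s)^3$ against one $\mcK$ and a bare $s\mcK_1(s)$ against the other — the cited bound has $\mcK_0(s)\mcR(s)^3\mcG_1(s) + s\mcK_1(s)$ without $\mcR^3$ on the second, which works because $\mcR(s)^3\lesssim\ln^3 s$ is absorbed or because the $L^1$-in-$x$ norm of $g$ is uniformly bounded, $\|g(t)\|_{L^1_x}\leq\|f_0\|_{L^1}$, making the second integrand just $s\mcK_1(s)$ — I would use the $L^1$ bound on $g$ to avoid the $\mcR^3$ loss there.)

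For the support statement, $\mathrm{supp}(F_\infty) = \Omv$: since $F(t,p)=\int g(t,x,p)\,dx$ and $g(t,x,p)\geq 0$ is supported in $p$ on the set $\{\mcP(t,0,x',p') : (x',p')\in\bS_f(0)\}$ appropriately transported — actually the cleanest route is via the characteristics. Using conservation of $f$ along \eqref{char} and a change of variables $x\mapsto\mcX(0,t,x,p)$, one has $F(t,p) = \int f_0(\mcX(0,t,x,p),\mcP(0,t,x,p))\,|\det\tfrac{\partial}{\partial x}\mcX(0,t,\cdot,p)|^{-1}\,dx$; in the limit, the $p$-support of the integrand collapses onto $\Omv$ by Lemma \ref{L6} and \eqref{inv-p}, and Lemma \ref{Vinfinvert} guarantees the limiting map is nondegenerate so no support is lost. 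I would argue that for any $p_*\in\Omv$ there is a characteristic with $\mcP_\infty=p_*$ and a neighborhood on which $F_\infty>0$, while for $p_*\notin\overline{\Omv}$ the integrand vanishes identically for large $t$; continuity of $F_\infty$ (it is a uniform limit of continuous functions, and $C^2$ follows from the $C^2$ regularity in Theorem \ref{T0} applied to higher $x$-derivatives, deferred) closes the characterization. Finally, particle-number conservation: $\int F(t,p)\,dp = \iint f(t,x,p)\,dx\,dp = \mcM$ for all $t$, and since $F(t)\to F_\infty$ uniformly on the fixed compact set $\Omv$ (with $F(t)$ supported in a fixed neighborhood of $\Omv$ for large $t$), we may pass the limit inside the integral to get $\int F_\infty(p)\,dp=\mcM$.

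The main obstacle I anticipate is bookkeeping the logarithmic powers correctly — specifically ensuring the divergence term $t\,\nabla_x\cdot K$ genuinely decays, which hinges on the $\mcK_1(t)\lesssim t^{-3}\ln t$ bound from Lemma \ref{LPrelim} holding on the \emph{translated} set $\{x+v(p)t : |x|\lesssim\mcR(t),\ |p|\leq\beta\}$; this is exactly why $\mcK_\ell$ was defined to include the supremum over translated curves $x+v(p)t$ with $|x|\lesssim\ln t$, and why the support bound $\mcR(t)\lesssim\ln t$ from Lemma \ref{Xsupp} is essential. The secondary subtlety is justifying differentiation under the integral sign and the integration by parts rigorously, which is routine given the compact support and $C^1$ regularity of $g$ from Theorem \ref{GS}. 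The identification of $\mathrm{supp}(F_\infty)=\Omv$ as an exact equality (rather than just $\subseteq$) requires the invertibility from Lemma \ref{Vinfinvert}, and I would be careful to note that this only gives the support statement for the portion of $\bS_f(0)$ with large enough "effective time," but \eqref{inv-p} lets one push any characteristic forward to a time $\tau\geq T_1$ where invertibility holds, so the conclusion is global.
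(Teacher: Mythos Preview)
Your approach is essentially the same as the paper's: integrate \eqref{VMg} in $x$, integrate by parts to move the spatial derivative onto $K$, and bound $\|\partial_t F(t)\|_\infty$ by an integrable quantity. Two points deserve correction. First, after integration by parts the spatial term is $t\int \mathrm{tr}[\mathbb{A}(p)\nabla_x K(t,x+v(p)t,p)]\,g\,dx$, not $t\int(\nabla_x\cdot K)\,g\,dx$; this is harmless for the bound since $|\mathbb{A}_{ij}|\le 1$, but your aside about $\nabla_x\cdot K=\rho$ is a red herring. Second, your proposed resolution of the bare $s\,\mcK_1(s)$ via ``$\|g(t)\|_{L^1_x}\le\|f_0\|_{L^1}$'' is incorrect: at fixed $p$ the quantity $\int g(t,x,p)\,dx=F(t,p)$ is \emph{not} controlled by particle-number conservation, which only bounds $\iint f\,dx\,dp$. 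The paper instead writes $|\partial_t F(t,p)|\lesssim t\,\mcK_1(t)\,F(t,p)+\mcK_0(t)\mcR(t)^3\mcG_1(t)$ and applies Gr\"onwall's inequality to conclude $\|F(t)\|_\infty\lesssim 1$, after which the first term becomes $t\,\mcK_1(t)$. Your cruder alternative $t\,\mcK_1(t)\mcR(t)^3\lesssim t^{-2}\ln^4(t)$ is also integrable and gives the same final rate, so the argument can be completed either way, but the intermediate inequality in the lemma statement is obtained via Gr\"onwall.

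Your sketch of $\mathrm{supp}(F_\infty)=\Omv$ and $F_\infty\in C^2_c$ is on the right track but underdeveloped. The paper makes this precise by passing to a weak limit, changing variables $(x,p)\mapsto(\mcX(T,t,x,p),\mcP(T,t,x,p))$ for a fixed $T\ge T_1$, then inverting $\tilde p\mapsto\mcP_\infty(T,\tilde x,\tilde p)$ via Lemma~\ref{Vinfinvert} to obtain the explicit formula
\[
F_\infty(u)=\int f(T,x,W(u))\,\chfn_{\Omv}(u)\,\left|\det\tfrac{\partial\mcP_\infty}{\partial p}(T,x,W(u))\right|^{-1}dx,
\]
from which $C^2_c$ and the support claim are immediate. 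Particle-number conservation follows by choosing $\psi\equiv 1$ in the weak limit, equivalent to your uniform-convergence-on-compact-support argument.
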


\begin{proof}
Upon integrating the Vlasov equation of \eqref{VMg} in $x$ and integrating by parts, we find
\begin{eqnarray*}
\left | \partial_t \int g(t,x,p) \ dx \right | & = & \left | \int K(t, x+v(p)t, p) \cdot (t \mathbb{A}(p)\nabla_x - \nabla_p) g(t,x,p) \ dx \right |\\
& = & \left | t  \int \mathrm{tr} \left [ \mathbb{A}(p) \nabla_x K(t, x+v(p)t, p) \right ] g(t,x,p) \ dx + \int K(t, x+v(p)t,p) \cdot \nabla_p g(t,x,p) \ dx \right |\\
& \lesssim & t \mcK_1(t) F(t,p) + \mcK_0(t) \mcR(t)^3 \mcG_1(t).
\end{eqnarray*}
Thus, we use Lemmas \ref{LPrelim}, \ref{Xsupp}, and \ref{Dvg} to find
$$\left | \partial_t F(t,p) \right | \lesssim t^{-2} \ln(t) F(t,p) + t^{-2}\ln^5(t).$$
As $F(0) \in L^\infty(\bfR^3)$ and the latter term above is integrable in time, we find
$$F (t,p) \leq F (0,p) + \int_0^t \left | \partial_t F (s,p) \right | \ ds \lesssim 1 +  \int_1^t s^{-2} \ln(s) F(s, p) \ ds,$$
and after taking the supremum and invoking Gr\"onwall's inequality, this yields
\begin{equation}
\label{FLinfty}
\| F(t) \|_\infty \leq \exp \left (\int_1^t s^{-2} \ln(s) \ ds \right ) \lesssim 1.
\end{equation}
Returning to the estimate of $\partial_t F$, we use the uniform bound on $\|F(t)\|_\infty$ to find
$$\left | \partial_t F(t,p) \right |  \lesssim t^{-2}\ln^5(t),$$
which implies that $ \| \partial_t F(t)\|_\infty$ is integrable.
This bound then establishes the estimate for $s \geq t$
$$\Vert F(t) - F(s) \Vert_\infty = \left \Vert \int_s^t \partial_t F(\tau) \ d\tau \right \Vert_\infty
\leq \int_t^s \Vert \partial_t F(\tau) \Vert_\infty \ d\tau
 \lesssim t^{-1}\ln^5(t),$$
and taking $s \to \infty$ establishes the limit. More precisely, as $F(t,p)$ is continuous and the limit is uniform in $p$, there is $F_\infty \in C(\bfR^3)$ such that
$$ \| F(t) - F_\infty \|_\infty  \lesssim t^{-1}\ln^5(t).$$
Next, we verify the properties of the limiting spatial average.
Due to the uniform convergence, we further conclude weak convergence of $F(t,p)$ as a measure, namely
\begin{equation}
\label{Weaklimit}
\lim_{t \to \infty} \int \psi(p) F(t,p) dp= \int \psi(p) F_\infty(p) dp
\end{equation}
for every $\psi \in C_b \left (\bfR^3 \right )$.
In this direction, let $\psi  \in C_b(\bfR^3)$ be given and fix any $T \geq T_1$ from Lemma \ref{Vinfinvert}.
Then, we apply the measure-preserving change of variables $(\tilde{x}, \tilde{p}) = ( \mcX(T, t, x, p), \mcP(T, t, x, p))$, so that 
\begin{eqnarray*}
\lim_{t \to \infty} \int \psi(p) F(t,p) \ dp
& = & \lim_{t \to \infty} \iint \psi(p) \ f(t,x,p) \ dp dx\\
& = & \lim_{t \to \infty} \iint\limits_{\bS_f(t)} \psi(p) \ f(T, \mcX(T,t,x,p), \mcP(T,t,x,p)) \ dp dx\\
& = & \lim_{t \to \infty}\iint\limits_{\bS_f(T)} \psi(\mcP(t,T,\tilde{x}, \tilde{p})) \ f(T, \tilde{x}, \tilde{p}) \ d\tilde{p} d\tilde{x}\\
& = & \iint\limits_{\bS_f(T)} \psi(\mcP_\infty(T,\tilde{x}, \tilde{p})) f(T, \tilde{x}, \tilde{p}) \ d\tilde{p} d\tilde{x}
\end{eqnarray*}
by Lebesgue's Dominated Convergence Theorem. Now, by Lemma \ref{Vinfinvert} for any $(\tilde{x}, \tilde{p}) \in \bS_f(T)$, the mapping $\tilde{p} \mapsto \mcP_\infty(T, \tilde{x}, \tilde{p})$ is $C^2$ with $$\left |\det \left (\frac{\partial \mcP_\infty}{\partial p}(T, \tilde{x}, \tilde{p}) \right )\right | \geq \frac{1}{2},$$ and thus bijective onto $\Om_p$. 
Hence, letting $u = \mcP_\infty(T, \tilde{x}, \tilde{p})$, we perform a change of variables and drop the tilde notation to find
$$\lim_{t \to \infty} \int \psi(p) F(t,p) dp
= \iint \psi(u) f(T, x, W(u)) \frac{\chfn_{\Om_p}(u)}{\left | \det \left (\frac{\partial \mcP_\infty}{\partial p}(T, x, W(u)) \right ) \right |} \ du dx$$
where for fixed $x$, the $C^2$ function $W$ defined on $\Omv$ is given by
$$W(u) = \left (\mcP_\infty \right)^{-1}(T,x,u).$$
Therefore, by uniqueness of the weak limit we find from \eqref{Weaklimit}
\begin{equation}\label{Finfty}F_\infty(u) = \int f(T, x, W(u)) \chfn_{\Om_p}(u) \left | \det \left (\frac{\partial \mcP_\infty}{\partial p}(T, x, W(u)) \right) \right |^{-1} \ dx\end{equation}
for any $u \in \bfR^3$, and thus $F_\infty \in C_c^2(\bfR^3)$ with supp$(F_\infty) = \Omv$.
Due to the invariance \eqref{inv-p} of $\mcP_\infty(\tau,x,p)$ with respect to the time variable, $F_\infty$ is also independent of the choice of $T$ and thus, well defined.
Finally, the conservation of particle number is obtained by merely choosing $\psi(p) = 1$ within \eqref{Weaklimit} and using the time-independence of this quantity.
%
\end{proof}

Now that we have shown the convergence of $F(t,p)$, we establish the precise asymptotic profiles of the charge and current densities.
First, recall that 
$\mcD(p) = \left | \det(\mathbb{A}(p)) \right |^{-1} = \left (1+ |p|^2 \right )^{5/2}$ due to \eqref{Ddef}.
From the limiting density $F_\infty(p)$, we define the induced charge density by
$$\rho_\infty(q) = F_\infty \left (v^{-1}(q) \right)  \mcD \left (v^{-1}(q) \right)$$
for $q \in \Gamma_1$ and smoothly extend it to $0$ for all $q \in \bfR^3 \setminus \Gamma_1$.
The induced current density is then defined by
$$j_\infty(q) = q\rho_\infty \left (v^{-1}(q) \right )$$
for every $q \in \bfR^3$.
Of course, in the case of multiple species the quantities $F(t,p)$ and $F_\infty(p)$ are replaced by sums over $\alpha$ of $F^\alpha(t,p)$ and $F^\alpha_\infty(p)$, respectively, within the remaining lemmas.
Then, we obtain the limiting behavior of the charge and current densities as follows.

\begin{lemma}
\label{LDensity}
The scaled charge density converges to a self-similar limiting function.
In particular,  we have 
$$\sup_{x \in \bfR^3} \left | t^3 \rho(t,x) - \rho_\infty\left (\frac{x}{t} \right ) \right | \lesssim \| F(t) - F_\infty\|_\infty + t^{-1} \mcR(t)^4 \mcG_1(t).$$
Thus, in view of Lemmas \ref{Xsupp}, \ref{Dvg}, and \ref{Funif}, we find
$$\sup_{x \in \bfR^3} \left | t^3 \rho(t,x) - \rho_\infty\left (\frac{x}{t} \right ) \right | \lesssim t^{-1} \ln^6(t).$$
\end{lemma}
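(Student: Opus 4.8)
The plan is to write $\rho(t,x) = \int f(t,x,p)\,dp = \int g(t,x-v(p)t,p)\,dp$ and perform a change of variables to convert this momentum integral into a spatial integral of $g$, which we already understand via Lemmas \ref{Xsupp} and \ref{Dvg}. Concretely, for fixed $(t,x)$ with $t$ large, I would substitute $y = x - v(p)t$, i.e. $p = v^{-1}\!\left(\tfrac{x-y}{t}\right)$; since $g(t,\cdot,p)$ is supported in $|p|\le\beta$, the region of integration is $|(x-y)/t| \le \zeta < 1$, so $v^{-1}$ and its derivatives are bounded on the relevant set. The Jacobian of this change of variables is $t^{-3}\,\mcD\!\left(v^{-1}(\tfrac{x-y}{t})\right)$ up to the bounded factor coming from $\nabla_p v^{-1} = \mathbb{B}$; using $\mcD(p) = |\det\mathbb{A}(p)|^{-1}$ this gives exactly
$$ t^3 \rho(t,x) = \int_{|y|\lesssim\mcR(t)} g\!\left(t,y,v^{-1}\!\left(\tfrac{x-y}{t}\right)\right)\, \mcD\!\left(v^{-1}\!\left(\tfrac{x-y}{t}\right)\right)\, dy, $$
where the spatial support restriction $|y|\lesssim\mcR(t)$ follows because $g(t,y,\cdot)=0$ for $y\notin\bS_g(t)$ (projected onto the $x$-variable), which by the definition of $\mcR(t)$ and Lemma \ref{Xsupp} lives in a ball of radius $\lesssim\mcR(t)\lesssim\ln t$.

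Next I would compare this to $\rho_\infty(x/t) = \int F_\infty(p)\mathbb{1}[\cdots]$ written analogously. Recalling $\rho_\infty(q) = F_\infty(v^{-1}(q))\mcD(v^{-1}(q))$ and $F_\infty(p) = \int g_\infty(y,p)\,dy$ in the sense that $F(t,p) = \int g(t,y,p)\,dy$, one has
$$ \rho_\infty\!\left(\tfrac{x}{t}\right) = \mcD\!\left(v^{-1}\!\left(\tfrac{x}{t}\right)\right) \int F_\infty\!\left(v^{-1}\!\left(\tfrac{x}{t}\right)\right) \text{-type terms}, $$
so the difference $t^3\rho(t,x) - \rho_\infty(x/t)$ splits into three pieces: (i) replacing $g$ by its "limit" contribution via $F$ vs. $F_\infty$ — but more cleanly, replace $g(t,y,v^{-1}(\tfrac{x-y}{t}))$ by $g(t,y,v^{-1}(\tfrac{x}{t}))$, whose $y$-integral is $F(t,v^{-1}(x/t))\cdot$(bounded); (ii) the error from that replacement, controlled by Lemma \ref{LH} applied with $\mcH = g\cdot\mcD$ (or by a direct version of its proof), giving a bound $\lesssim t^{-1}\mcR(t)^4\|\nabla_p g(t)\|_\infty \lesssim t^{-1}\mcR(t)^4\mcG_1(t)$; and (iii) the difference $\mcD(v^{-1}(\tfrac{x-y}{t}))$ vs. $\mcD(v^{-1}(\tfrac{x}{t}))$, which is of the same order and absorbed into (ii) since $\mcD\circ v^{-1}$ is smooth and bounded with bounded derivatives on compact subsets of $\Gamma_1$. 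After these replacements the main term is $\mcD(v^{-1}(x/t))\,F(t,v^{-1}(x/t))$ minus $\mcD(v^{-1}(x/t))\,F_\infty(v^{-1}(x/t))$, bounded by $\|F(t)-F_\infty\|_\infty$ times a constant. Collecting, $\sup_x |t^3\rho(t,x)-\rho_\infty(x/t)| \lesssim \|F(t)-F_\infty\|_\infty + t^{-1}\mcR(t)^4\mcG_1(t)$, and then Lemma \ref{Funif} ($\|F(t)-F_\infty\|_\infty\lesssim t^{-1}\ln^5 t$), Lemma \ref{Xsupp} ($\mcR(t)\lesssim\ln t$), and Lemma \ref{Dvg} ($\mcG_1(t)\lesssim\ln^2 t$) yield $t^{-1}\ln^5 t + t^{-1}\ln^4 t\cdot\ln^2 t \lesssim t^{-1}\ln^6 t$.

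The main obstacle, and the step requiring the most care, is handling the change of variables rigorously: one must check that $p\mapsto x-v(p)t$ is a genuine diffeomorphism onto its image for the relevant momenta (it is, since $v$ is a diffeomorphism of $\bfR^3$ onto $\Gamma_1$ and $t>0$), that the support condition correctly confines $y$ to $|y|\lesssim\mcR(t)$ uniformly in $x$, and that the Jacobian factor produces precisely $t^3$ with $\mcD$ and no stray constants — this is where the identity $\mathbb{B}(v(p))\mathbb{A}(p)=\mathbb{I}$ from Lemma \ref{lem:v} and the formula $\mcD(p)=p_0^5=|\det\mathbb{A}(p)|^{-1}$ are used. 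A secondary technical point is that the domain of integration in $y$ depends on $t$ (and implicitly on $x$), so applying Lemma \ref{LH} requires verifying its hypothesis $x + \Gamma_{\mcR(t)}\subset\Gamma_t$ for $t$ large, which is immediate since $|x|\lesssim\ln t$ on the support and $\mcR(t)\lesssim\ln t$. Once the bookkeeping is in place, every estimate is a direct citation of the preliminary lemmas.
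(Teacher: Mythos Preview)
Your proposal is correct and follows essentially the same approach as the paper: the same change of variables $y = x - v(p)t$, the same bundling of $g$ and $\mcD$ into a single function to which Lemma~\ref{LH} is applied, and the same two-term splitting into the $\|F(t)-F_\infty\|_\infty$ piece and the $t^{-1}\mcR(t)^4\mcG_1(t)$ piece. One small slip: the support of $\rho(t,\cdot)$ satisfies $|x|\lesssim\gamma t$ (from Lemma~\ref{LPrelim}), not $|x|\lesssim\ln t$; the latter is the spatial support of $g$, but either bound suffices to verify the hypothesis of Lemma~\ref{LH}.
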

\begin{proof}
Due to the growth of the spatial support of $f(t,x,p)$ established in Lemma \ref{LPrelim}, it suffices to consider only $|x| \lesssim \gamma t$. To prove the result, we first conduct a change of variables
$$y =  x - v(p)t$$
with
$$dy =  t^3 \left | \det(\mathbb{A}(p)) \right | dp$$
in the integral of $\rho(t,x)$.
Hence, this transforms
$$\rho(t,x) =  \int f (t, x, p) \ dp = \int g (t, x - v(p)t, p) \ dp$$
into
\begin{equation}
\label{rhop}
\rho(t,x) = \frac{1}{t^3}  \int g \left (t, y, v^{-1} \left (\frac{x-y}{t}  \right ) \right ) \mcD\left (v^{-1} \left (\frac{x-y}{t}  \right ) \right ) dy.
\end{equation}
Letting $$\mcH(t,y,p) =\mcD(p)  g(t, y, p),$$
this can be expressed as
$$\rho(t,x) = t^{-3} \int \mcH \left (t,y,  v^{-1} \left (\frac{x-y}{t}  \right ) \right ) \ dy.$$
Therefore, as $\rho_\infty(q) = F_\infty\left (v^{-1}(q) \right ) \mcD\left (v^{-1}(q) \right )$ and $F(t,p) = \int g(t,y,p) \ dy$, the difference of the charge densities can be split into
$$ \left | t^3 \rho(t,x) - \rho_\infty \left ( \frac{x}{t}  \right) \right |  \leq I + II$$
where
$$I = \int \left | \mcH \left (t,y,  v^{-1} \left ( \frac{x-y}{t}  \right ) \right )  - \mcH \left (t, y, v^{-1} \left (\frac{x}{t} \right )  \right )\right | dy $$
and
$$II = \left | F \left (t,  v^{-1} \left (\frac{x}{t} \right) \right ) - F_\infty \left (v^{-1} \left (\frac{x}{t} \right)  \right ) \right| \mcD\left ( v^{-1} \left (\frac{x}{t}  \right )\right ).$$

We estimate $I$ by noting that $\mcH(t,y,p)$ is supported on $|y| \lesssim \ln(t)$ and invoking Lemma \ref{LH} to find
$$I \lesssim  t^{-1} \mcR(t)^4 \Vert \nabla_p \mcH(t) \Vert_\infty.$$
As $\mcD(p)$, its derivatives, and $\Vert g(t) \Vert_\infty$ are all uniformly bounded, we have
$\Vert \nabla_p \mcH(t) \Vert_\infty \lesssim \mcG_1(t)$, 
and combining this with the estimates of Lemmas \ref{Xsupp} and \ref{Dvg} gives
$$I \lesssim  t^{-1} \mcR(t)^4 \mcG_1(t) \lesssim t^{-1} \ln^6(t).$$
The estimate for $II$ uses Lemma \ref{Funif}, the compact support of $F(t)$ and $F_\infty$, and $\left | \frac{x}{t} \right | \lesssim \gamma < 1$ so that $\mcD(p)$ is bounded on the supports of these functions and
$$II \lesssim \| F(t) - F_\infty \|_\infty \lesssim t^{-1} \ln^5(t).$$
Combining these estimates then yields the stated result.
\end{proof}

Next, we estimate the current density in a similar fashion.
\begin{lemma}
\label{LCurrent}
The scaled current density converges to a self-similar limiting function.
In particular, we have 
$$\sup_{x \in \bfR^3} \left | t^3 j(t,x) - j_\infty \left (\frac{x}{t} \right) \right | \lesssim
\| F(t) - F_\infty \|_\infty + t^{-1}\mcR(t)^4 \mcG_1(t).$$
Thus, in view of Lemmas \ref{Xsupp}, \ref{Dvg}, and \ref{Funif}, we find
$$\sup_{x \in \bfR^3} \left | t^3 j(t,x) - j_\infty \left ( \frac{x}{t} \right ) \right | \lesssim t^{-1} \ln^6(t).$$
\end{lemma}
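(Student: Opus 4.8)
The plan is to mirror, almost verbatim, the proof of Lemma \ref{LDensity}, replacing the scalar weight $\mcD(p)$ by the vector-valued weight $v(p)\mcD(p)$ and tracking where the extra factor of $v(p)$ enters. First I would restrict attention to $|x| \lesssim \gamma t$, which is legitimate because Lemma \ref{LPrelim} guarantees that $f(t,x,p)$ vanishes for $|x|$ larger than this, hence so does $j(t,x)$. On this set I would perform the change of variables $y = x - v(p)t$, with $dy = t^3 |\det \mathbb{A}(p)|\,dp = t^3 \mcD(p)^{-1}\,dp$, turning
\begin{equation*}
j(t,x) = \int v(p) f(t,x,p)\,dp = \int v(p)\, g(t, x - v(p)t, p)\,dp
\end{equation*}
into
\begin{equation*}
j(t,x) = \frac{1}{t^3}\int v\!\left(v^{-1}\!\left(\tfrac{x-y}{t}\right)\right) g\!\left(t, y, v^{-1}\!\left(\tfrac{x-y}{t}\right)\right) \mcD\!\left(v^{-1}\!\left(\tfrac{x-y}{t}\right)\right)\,dy = \frac{1}{t^3}\int \frac{x-y}{t}\, \mcH\!\left(t, y, v^{-1}\!\left(\tfrac{x-y}{t}\right)\right)\,dy,
\end{equation*}
using the trivial identity $v(v^{-1}(q)) = q$, where $\mcH(t,y,p) = \mcD(p) g(t,y,p)$ is exactly the function appearing in Lemma \ref{LDensity}.

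Next I would compare this to the target $j_\infty(x/t) = (x/t)\rho_\infty(x/t)$, splitting the difference $|t^3 j(t,x) - j_\infty(x/t)|$ into three pieces rather than two: (I) the error from replacing $v^{-1}((x-y)/t)$ by $v^{-1}(x/t)$ inside $\mcH$; (I$'$) the error from replacing the prefactor $(x-y)/t$ by $x/t$; and (II) the error from replacing $F(t,\cdot)$ by $F_\infty$. Piece (I) is handled exactly as in Lemma \ref{LDensity}: since $\mcH(t,\cdot,\cdot)$ is supported on $|y| \lesssim \ln(t) \sim \mcR(t)$, Lemma \ref{LH} gives $I \lesssim t^{-1}\mcR(t)^4 \|\nabla_p \mcH(t)\|_\infty \lesssim t^{-1}\mcR(t)^4 \mcG_1(t)$, the last step using boundedness of $\mcD$ and its derivatives on the momentum support together with $\|g(t)\|_\infty \leq \|f_0\|_\infty$. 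Piece (I$'$) is even easier: $|(x-y)/t - x/t| = |y|/t \lesssim \mcR(t)/t$, and integrating $|y|/t$ times the bounded integrand $\mcH$ over $|y| \lesssim \mcR(t)$ contributes at most $t^{-1}\mcR(t)^4 \|g(t)\|_\infty \lesssim t^{-1}\ln^4(t)$, which is absorbed into the claimed bound. Piece (II) is controlled by Lemma \ref{Funif} and the boundedness of $\mcD(p)$ and of $|x/t| \lesssim \gamma < 1$ on the relevant supports, exactly as in Lemma \ref{LDensity}, giving $II \lesssim \|F(t) - F_\infty\|_\infty$. Collecting the three pieces yields the first displayed inequality, and then substituting $\mcR(t) \lesssim \ln(t)$ (Lemma \ref{Xsupp}), $\mcG_1(t) \lesssim \ln^2(t)$ (Lemma \ref{Dvg}), and $\|F(t) - F_\infty\|_\infty \lesssim t^{-1}\ln^5(t)$ (Lemma \ref{Funif}) gives the stated $t^{-1}\ln^6(t)$ rate.

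I do not expect a genuine obstacle here; the only point requiring a touch of care is the bookkeeping of the extra velocity factor, specifically recognizing that $v$ composed with $v^{-1}$ collapses to the identity so that the new prefactor is literally $(x-y)/t$, and then noticing that the additional term (I$'$) coming from $|y|/t$ in that prefactor is of the same order $t^{-1}\mcR(t)^4\mcG_1(t)$ (indeed slightly better) as the term already present in the charge-density estimate, so the final bound is unchanged. Everything else — the change of variables, the support restriction, the invocation of Lemma \ref{LH}, and the passage through Lemmas \ref{Xsupp}, \ref{Dvg}, \ref{Funif} — is a direct transcription of the preceding proof.
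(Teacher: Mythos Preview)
Your proposal is correct and follows essentially the same route as the paper. The only cosmetic difference is that the paper absorbs the velocity factor into the auxiliary function, taking $\mcH(t,y,p)=v(p)\mcD(p)g(t,y,p)$ and doing a two-term split $I+II$, whereas you keep the prefactor $(x-y)/t$ outside of $\mcH$ and compensate with the extra piece~(I$'$); since $\mcG_1(t)\geq 1$ this extra term is indeed dominated by $t^{-1}\mcR(t)^4\mcG_1(t)$ and the final bound is identical.
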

\begin{proof}
Similar to the previous lemma, it suffices to consider only $|x| \lesssim \gamma t$. 
We again prove the result by first conducting a change of variables, namely
$$y =  x - v(p)t$$
in order to transform
$$j(t,x) =  \int v(p) g (t, x- v(p)t, p) \ dp$$
into
$$j(t,x) = \frac{1}{t^3}  \int \frac{x-y}{t}  g \left (t, y, v^{-1} \left (\frac{x-y}{t}  \right ) \right ) \mcD \left ( v^{-1} \left (\frac{x-y}{t}  \right ) \right ) dy$$
or
\begin{equation}
\label{jp}
j(t,x) = \frac{1}{t^3}  \int v(r) \mcD(r) g\left (t, y, r \right ) \biggr |_{r = v^{-1}\left (\frac{x-y}{t} \right )} dy.
\end{equation}
Letting
$$\mcH(t,y,p) = v(p) \mcD(p) g(t, y, p),$$
this can be expressed more succinctly as
$$j(t,x) = t^{-3}  \int \mcH \left (t, y, v^{-1}\left (\frac{x-y}{t} \right ) \right ) dy.$$ 
Recall that
$$j_\infty (q) = q\mcD \left (v^{-1}\left (q \right ) \right ) F_\infty  \left (v^{-1}\left (q \right ) \right ).$$
With this, the difference of the current densities can be split into two terms as
\begin{eqnarray*}
\left | t^3 j(t,x) - j_\infty \left ( \frac{x}{t} \right ) \right | &\leq & \int \left | \mcH \left (t,y,  v^{-1} \left (\frac{x-y}{t}  \right ) \right )  - \mcH \left (t, y,  v^{-1} \left ( \frac{x}{t} \right) \right )\right | dy  \\
& \ & + \left | \frac{x}{t} \right | \mcD \left (v^{-1}\left (\frac{x}{t} \right ) \right ) \left | F \left (t,  v^{-1}\left (\frac{x}{t} \right )\right) - F_\infty \left ( v^{-1}\left (\frac{x}{t} \right ) \right ) \right| \\
& =: & I + II.
\end{eqnarray*}

As in the previous lemma, we estimate the first portion by invoking
Lemma \ref{LH}.
As the product $v(p)\mcD(p)$ and all of its derivatives are uniformly bounded, we again deduce
$\Vert \nabla_p \mcH(t) \Vert_\infty \lesssim \mcG_1(t)$.
Combining this with the estimates of Lemmas \ref{Xsupp} and \ref{Dvg} then gives
$$I \lesssim  t^{-1} \mcR(t)^4 \mcG_1(t) \lesssim t^{-1} \ln^6(t).$$
The latter term uses $\left | \frac{x}{t} \right | \lesssim \gamma < 1$, Lemma \ref{Funif}, and the compact support of $F(t)$ and $F_\infty$ so that $\mcD(p)$ is again bounded on the supports of these functions to find
$$II \lesssim \| F(t) - F_\infty \|_\infty \lesssim t^{-1} \ln^5(t).$$
Combining the estimates of $I$ and $II$ then yields the stated result.
\end{proof}

\section{Higher Regularity Estimates}
\label{sec:higher}

With the estimates on lower order derivatives of $g, E$, and $B$ established, we turn our attention to second-order derivatives, which have not been previously obtained for \eqref{RVM} and are crucial to constructing the time asymptotic limits and scattering profile that we wish to study.
The lemmas of this section will also be needed to prove the small data theorem.
Throughout this section we will assume that the second derivatives of the fields satisfy the estimate
\begin{equation}
\label{K2}
\mcK_2(t) \lesssim t^{-4} \ln(t).
\end{equation}

First, we estimate second-order derivatives of the translated distribution function; in particular, establishing a nearly sharp estimate on their growth using the decay of the second derivative of the fields.

\begin{lemma}
\label{D2g}
We have
$$\mcG_2(t) \lesssim 1 + \int_1^t \left (s^3 \mcK_2(s) + s^2 \ln^2(s)\mcK_1(s) + s \ln^2(s)\mcK_0(s) \right )\,ds,$$
and thus by Lemma \ref{LPrelim} and \eqref{K2} 
$$\mcG_2(t) \lesssim \ln^4(t).$$
Furthermore, 
$$\Vert \nabla^2_x g(t) \Vert_\infty \lesssim 1 \qquad \mathrm{and} \qquad \Vert \nabla_p \nabla_x g(t) \Vert_\infty \lesssim \ln^2(t).$$
\end{lemma}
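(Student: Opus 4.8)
The plan is to differentiate the transformed Vlasov equation \eqref{VMg} a second time and to integrate the resulting transport equations along the translated characteristics $(\mcY,\mcP)$ of \eqref{gchar}, exactly in the spirit of the proof of Lemma \ref{Dvg} (equivalently, one may apply the untranslated operator to the corresponding combinations of second derivatives of $f$ and integrate along $(\mcX,\mcP)$; the content is the same). Writing $\tilde K(t,x,p):=K(t,x+v(p)t,p)$ for the translated Lorentz force and $\mcV_g$ for the transport operator $\partial_t-t\,\mathbb{A}(p)\tilde K\cdot\nabla_x+\tilde K\cdot\nabla_p$ of \eqref{VMg}, so that $\mcV_g g=0$, I note that the commutators of $\mcV_g$ with $\partial_{x_j}$ and with $\partial_{p_k}$ only differentiate the coefficients of $\mcV_g$. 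Hence applying $\partial_{x_i}\partial_{x_j}$, $\partial_{p_k}\partial_{x_j}$, and $\partial_{p_\ell}\partial_{p_k}$ to $\mcV_g g=0$ produces transport equations for $\nabla_x^2 g$, $\nabla_p\nabla_x g$, and $\nabla_p^2 g$ whose sources are linear in the at-most-second derivatives of $g$ with coefficients built from derivatives of $\tilde K$ through second order. Since each $p$-derivative landing on $\tilde K$ brings down a factor $t\,\mathbb{A}(p)\nabla_x$, and since the support of $g$ forces $|x|\lesssim\ln(t)$ so that $x+v(p)t$ stays a distance $\gtrsim t$ inside the light cone (Lemma \ref{LPrelim} and Lemma \ref{Xsupp}), these coefficients obey $|\tilde K|\lesssim\mcK_0$, $|\partial_x\tilde K|\lesssim\mcK_1$, $|\partial_x^2\tilde K|\lesssim\mcK_2$, $|\partial_p\tilde K|\lesssim t\mcK_1+\mcK_0$, and $|\partial_p^2\tilde K|\lesssim t^2\mcK_2+t\mcK_1+\mcK_0$, with analogous bounds for the mixed derivatives.

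Integrating along $(\mcY,\mcP)$ and taking suprema over $\bS_g$ then gives coupled integral inequalities. For $\|\nabla_p^2 g(t)\|_\infty$, using $\|\nabla_x g\|_\infty\lesssim1$ (Theorem \ref{GS}), $\mcG_1(t)\lesssim\ln^2(t)$ (Lemma \ref{Dvg}), and the bootstrapped bound $\|\nabla_p\nabla_x g\|_\infty\lesssim\ln^2(t)$, the source contributes a term of size $\lesssim s^3\mcK_2(s)+s^2\ln^2(s)\mcK_1(s)+s\ln^2(s)\mcK_0(s)$, together with a term $\lesssim(t\mcK_1+\mcK_0)\|\nabla_p^2 g\|$ in which $\|\nabla_p^2 g\|$ reappears but carries the integrable coefficient $t\mcK_1+\mcK_0\lesssim t^{-2}\ln t$. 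Gr\"onwall's inequality (the exponential factor being $O(1)$ since $\int_0^\infty(s\mcK_1(s)+\mcK_0(s))\,ds<\infty$ by Lemma \ref{LPrelim}) then yields precisely
$$\mcG_2(t)\lesssim 1+\int_1^t\left(s^3\mcK_2(s)+s^2\ln^2(s)\mcK_1(s)+s\ln^2(s)\mcK_0(s)\right)ds,$$
the contribution of $\mcG_1$ from Lemma \ref{Dvg} being dominated by the terms already displayed. Inserting \eqref{K2} together with $\mcK_1\lesssim t^{-3}\ln t$ and $\mcK_0\lesssim t^{-2}$ from Lemma \ref{LPrelim}, the integrand is $\lesssim s^{-1}\ln^3(s)$, so $\mcG_2(t)\lesssim\ln^4(t)$. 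The parallel computations for $\|\nabla_x^2 g\|_\infty=\|\nabla_x^2 f\|_\infty$ and $\|\nabla_p\nabla_x g\|_\infty$ produce only source terms that are either integrable in $s$ or integrate to at most $\ln^2(t)$, giving $\|\nabla_x^2 g(t)\|_\infty\lesssim1$ and $\|\nabla_p\nabla_x g(t)\|_\infty\lesssim\ln^2(t)$.

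Because the three estimates are mutually coupled — $\|\nabla_x^2 g\|$ enters the source for $\|\nabla_p\nabla_x g\|$, which enters the source for $\|\nabla_p^2 g\|$, which loops back through $\|\nabla_p\nabla_x g\|$ — and the relevant constants need not be small, these bounds must be closed by a single continuity argument rather than consecutively. Concretely, assuming $\|\nabla_x^2 g(s)\|_\infty\le A$, $\|\nabla_p\nabla_x g(s)\|_\infty\le A\ln^2(s+2)$, and $\|\nabla_p^2 g(s)\|_\infty\le A\ln^4(s+2)$ on a maximal interval $[T_0,T_*)$, one re-derives strictly smaller bounds by choosing the base time $T_0$ large enough that the tail integrals $\int_{T_0}^\infty(\cdots)\,ds$ absorb the factor $A$ coming from the bootstrap hypotheses, and $A$ large relative to the (finite, by the assumed $C^2$ regularity of the solution) derivative bounds at $s=T_0$; this forces $T_*=\infty$, and the finite interval $[0,T_0]$ contributes only an $O(1)$ constant.

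The main obstacle is the bookkeeping of commutator terms: one must check that every term, once the correct power of $t$ from the $p$-derivatives of $\tilde K$ is attached and the decay rates $\mcK_0\lesssim t^{-2}$, $\mcK_1\lesssim t^{-3}\ln t$, $\mcK_2\lesssim t^{-4}\ln t$ are inserted, is either integrable in time (for coefficients multiplying the quantity being estimated, so that Gr\"onwall applies) or integrates to no worse than $\ln^4(t)$ (for genuine source terms). In particular, the potentially dangerous terms weighted by $t^2$ — coming from the two $p$-derivatives falling on $\tilde K$ in the $\nabla_p^2 g$ equation — must be shown to carry the extra decay of $\mcK_2$ needed to reduce to the integrand $s^{-1}\ln^3(s)$; verifying this, and that the coupling constants are indeed absorbed by the large-$T_0$ truncation, is where the real work lies.
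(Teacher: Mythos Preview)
Your identification of the commutator structure and the coefficient bounds for derivatives of $\tilde K$ is correct, and the overall strategy of integrating along translated characteristics matches the paper. However, your continuity argument has a gap at exactly the place you flag as ``where the real work lies.'' In the $\nabla_p^2 g$ inequality, the coupling term $s\,(s\mcK_1+\mcK_0)\,\|\nabla_p\nabla_x g\|$ under your bootstrap hypothesis $\|\nabla_p\nabla_x g\|\le A\ln^2(s+2)$ produces an integrand $\sim c_0 A\, s^{-1}\ln^3 s$, which integrates to $\tfrac{c_0 A}{4}\ln^4 t$. This is \emph{not} a tail integral: $s^{-1}\ln^3 s$ is not integrable at infinity, so enlarging $T_0$ does not shrink the resulting coefficient. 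The structural constant $c_0$ is fixed by the commutator identity and the implicit constants in the $\mcK_\ell$ bounds, with no reason to be smaller than $4$, so the bootstrap cannot be strictly improved. The same issue appears one level down, in the $\nabla_p\nabla_x g$ inequality via the term $s\,(s\mcK_1+\mcK_0)\,\|\nabla_x^2 g\|$.

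The paper circumvents this with a two-stage argument. First it introduces the weighted quantity $\mcD_2(t)=\|\nabla_x^2 g\|_\infty+t^{-1}\|\nabla_p\nabla_x g\|_\infty+t^{-2}\|\nabla_p^2 g\|_\infty$; after dividing by the appropriate power of $t$, every coupling coefficient becomes genuinely integrable (of order $s^{-2}\ln s$), and a single Gr\"onwall gives $\mcD_2(t)\lesssim 1$ with no constraint on structural constants. This yields the crude bounds $\|\nabla_x^2 g\|\lesssim 1$, $\|\nabla_p\nabla_x g\|\lesssim t$, $\|\nabla_p^2 g\|\lesssim t^2$. Second, these crude bounds are fed back \emph{sequentially}: the bound $\|\nabla_p^2 g\|\lesssim s^2$ enters the $\nabla_p\nabla_x g$ inequality only through the integrable coefficient $s^{-3}\ln s$, so Gr\"onwall (with integrable self-coupling $s^{-2}\ln s$) gives $\|\nabla_p\nabla_x g\|\lesssim\ln^2 t$; this in turn is inserted into the $\nabla_p^2 g$ inequality, where the borderline source now carries a fixed constant rather than the bootstrap parameter $A$, and a final Gr\"onwall yields $\ln^4 t$. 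The weighted-norm step is the missing idea in your proposal.
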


\begin{remark}
\label{rmk:D2g}
If, in addition to the stated assumptions, we have $\Vert f_0 \Vert_{C^2} \leq \epsilon_0$, then the estimate on spatial derivatives becomes
$$\Vert \nabla^2_x g(t) \Vert_\infty \lesssim \epsilon_0.$$
\end{remark}

\begin{proof}
On bounded time intervals, e.g. $t \in [0,1]$, uniform-in-time estimates can be obtained via straightforward methods (see \cite{Glassey}). Hence, we take $t > 1$ throughout and focus on large time estimates only.
Additionally, we suppress the argument of $\mathbb{A}(v(p))$ and merely denote this term by $\mathbb{A}$.
To begin, we note that spatial derivatives of $g$ are merely spatial derivatives of $f$ evaluated along translated curves in phase space, and thus from Theorem \ref{GS} we find
\begin{equation}
\label{Dxg}
\Vert \nabla_x g(t) \Vert_\infty = \Vert \nabla_x f(t) \Vert_\infty \lesssim 1.
\end{equation}

With this, we let $\mcV_g$ denote the translated Vlasov operator appearing in \eqref{VMg}. Then by a straightforward computation we find
	\begin{equation}\label{eq:pp-commut}
	\begin{split}
	\mcV_g[\partial_{m}\partial_{n}g](t,x,p)
	=\ &
	t\partial_{m}\left\{\partial_{n}\left[\mathbb{A}K(t,x+v(p)t, p)\right]\cdot \nabla_{x}g\right\}(t,x,p)\\
	&+
	t\partial_{n}\left\{\partial_{m}\left[\mathbb{A}K(t,x+v(p)t, p)\right]\cdot \nabla_{x}g\right\}(t,x,p)\\
	&-
	t\partial_{m}\partial_{n}\left[\mathbb{A}K(t,x+v(p)t, p)\right]\cdot \nabla_{x}g(t,x,p)\\
	&-
	\partial_{m}\left\{\partial_{n}\left[K(t,x+v(p)t, p)\right]\cdot \nabla_{p}g\right\}(t,x,p)\\
	&-
	\partial_{n}\left\{\partial_{m}\left[K(t,x+v(p)t, p)\right]\cdot \nabla_{p}g\right\}(t,x,p)\\
	&+
	\partial_{m}\partial_{n}\left[K(t,x+v(p)t, p)\right]\cdot \nabla_{p}g(t,x,p)
	\end{split}
	\end{equation}
where $\partial_i\in\{\partial_{p_i},\partial_{x_i}\}$ for $1\leq i\leq3$.  Recall that all derivatives of $v(p)$ and $\mathbb{A}$ are bounded and that derivatives of $K$ satisfy
	\begin{align*}
	\partial_{p_m}\left[K(t,x+v(p)t, p)\right]
	=\ &
	t\partial_{x_i}K(t,x+v(p)t, p)\mathbb{A}_{mi}+\partial_{p_m}(v(p))\times B(t,x+v(p)t),\\
	\partial_{x_n}\partial_{p_m}\left[K(t,x+v(p)t, p)\right]
	=\ &
	t\partial_{x_n}\partial_{x_i}K(t,x+v(p)t, p)\mathbb{A}_{mi}+\partial_{p_m}(v(p))\times \partial_{x_n}B(t,x+v(p)t),\\
	\partial_{p_m}\partial_{p_n}\left[K(t,x+v(p)t, p)\right]
	=\ &
	{t\partial_{p_m}\left[\partial_{x_i}K(t,x+v(p)t, p)\mathbb{A}_{ni}\right]}
	+
	{\partial_{p_m}\left[\partial_{p_n}(v(p))\times B(t,x+v(p)t)\right]}\\
	=\ &
	{t^2\partial_{x_j}\partial_{x_i}K(t,x+v(p)t, p)\mathbb{A}_{ni}\mathbb{A}_{mj}}
	+
	{t\partial_{x_i}K(t,x+v(p)t, p)\partial_{p_m}\mathbb{A}_{ni}}\\
	&+
	{t\partial_{p_m}(v(p))\times\partial_{x_i}B(t,x+v(p)t)\mathbb{A}_{ni}}
	+
	{\partial_{p_m}\partial_{p_n}(v(p))\times B(t,x+v(p)t)}\\
	&+
	{t\partial_{p_n}(v(p))\times\partial_{x_i}B(t,x+v(p)t)\mathbb{A}_{mi}}.
	\end{align*}
Due to the estimates of Lemma \ref{LPrelim} and \eqref{K2}, this implies 
	\begin{align*}
	\left |\partial_{p_m}\left[K(t,x+v(p)t, p)\right] \right |
	&\lesssim
	t \mcK_1(t) + \mcK_0(t)
	\lesssim t^{-2}\ln(t),\\
	\left |\partial_{x_n}\partial_{p_m}\left[K(t,x+v(p)t, p)\right] \right |
	&\lesssim
	t \mcK_2(t) + \mcK_1(t)
	\lesssim t^{-3}\ln(t),\\
	\left |\partial_{p_m}\partial_{p_n}\left[K(t,x+v(p)t, p)\right] \right |
	&\lesssim
	t^2 \mcK_2(t) + t \mcK_1(t) + \mcK_0(t)
	\lesssim t^{-2}\ln(t).
	\end{align*}
Next, for $t > 1$ we define
	\begin{align*}
	\mcD_2(t) :=
	\|\nabla_x^2g(t)\|_\infty+t^{-1}\|\nabla_p\nabla_xg(t)\|_\infty+t^{-2}\|\nabla_p^2g(t)\|_\infty
	\end{align*}
which we shall bound uniformly in time.
Integrating \eqref{eq:pp-commut} with  $\partial_{m}\partial_{n}=\partial_{p_m}\partial_{p_n}$  along the characteristics of $\mcV_g$  gives
\begin{equation}
	\label{eq:gpp1}
	\begin{split}
	\|\nabla_p^2g(t)\|_{\infty}
\lesssim\ &
	1+\int_1^t \biggl [s\left (s^2 \mcK_2(s) + s\mcK_1(s) + \mcK_0(s) \right ) \|\nabla_xg(s)\|_\infty \\
	& + s\left ( s\mcK_1(s) + \mcK_0(s) \right )\|\nabla_p\nabla_xg(s)\|_\infty+  \left (s^2 \mcK_2(s) + s\mcK_1(s) + \mcK_0(s) \right )\|\nabla_pg(s)\|_\infty\\
	& + \left ( s\mcK_1(s) + \mcK_0(s) \right )\|\nabla_p^2g(s)\|_\infty\biggr]\,ds.
	\end{split}
	\end{equation}
Then, multiplying by $t^{-2}$ and using \eqref{Dxg} and Lemma \ref{Dvg} yields	
	\begin{equation}
	\label{eq:gpp2}
	\begin{split}
	t^{-2}\|\nabla_p^2g(t)\|_{\infty}
	\lesssim\ &
	t^{-2}+t^{-2}\int_1^ts\left(s^{-2}\ln(s)\|\nabla_xg(s)\|_\infty+s^{-2}\ln(s)\|\nabla_p\nabla_xg(s)\|_\infty\right)\,ds\\
	&+
	t^{-2}\int_1^t\left(s^{-2}\ln(s)\|\nabla_pg(s)\|_\infty+s^{-2}\ln(s)\|\nabla_p^2g(s)\|_\infty\right)\,ds\\
	\lesssim\ &
	1+ t^{-2} \int_1^t s^{-1}\ln(s) \,ds + \int_1^t s^{-3}\ln(s)\|\nabla_p\nabla_xg(s)\|_\infty \,ds\\
	&+
	t^{-2} \int_1^t s^{-2}\ln^3(s) \,ds + \int_1^ts^{-4}\ln(s)\|\nabla_p^2g(s)\|_\infty\,ds\\
	\lesssim\ &
	1+\int_1^ts^{-2}\ln(s)\left(s^{-1}\|\nabla_p\nabla_xg(s)\|_\infty +s^{-2}\|\nabla_p^2g(s)\|_\infty\right)\,ds\\
	\lesssim\ &
	1+\int_1^ts^{-2}\ln(s)\mcD_2(s)\,ds.
	\end{split}
	\end{equation}
Integrating \eqref{eq:pp-commut} with  $\partial_{m}\partial_{n}=\partial_{p_m}\partial_{x_n}$  along the characteristics of $\mcV_g$  and using the field estimates gives
	\begin{equation}
	\label{eq:gpx}
	\begin{split}
	t^{-1}\|&\nabla_p\nabla_xg(t)\|_{\infty}\\
	\lesssim\ &
	t^{-1}+t^{-1}\int_1^ts\left(s^{-3}\ln(s)\|\nabla_xg(s)\|_\infty+s^{-3}\ln(s)\|\nabla_p\nabla_xg(s)\|_\infty+s^{-2}\ln(s)\|\nabla_x^2g(s)\|_\infty\right)\,ds\\
	&+
	t^{-1}\int_1^t\left(s^{-3}\ln(s)\|\nabla_pg(s)\|_\infty+s^{-2}\ln(s)\|\nabla_p\nabla_xg(s)\|_\infty+s^{-3}\ln(s)\|\nabla_p^2g(s)\|_\infty\right)\,ds\\
	\lesssim\ &
	1+ t^{-1} \int_1^t s^{-2}\ln(s) \,ds + \int_1^t \left (s^{-3}\ln(s)\|\nabla_p\nabla_xg(s)\|_\infty+s^{-2}\ln(s)\|\nabla_x^2g(s)\|_\infty\right)\,ds\\
	&+
	t^{-1} \int_1^ts^{-3}\ln^3(s) \,ds +\int_1^t \left (s^{-3}\ln(s)\|\nabla_p\nabla_xg(s)\|_\infty+s^{-4}\ln(s)\|\nabla_p^2g(s)\|_\infty\right)\,ds\\
	\lesssim\ &
	1+
	\int_1^ts^{-2}\ln(s)\left(\|\nabla_x^2g(s)\|_\infty+s^{-1}\|\nabla_p\nabla_xg(s)\|_\infty+s^{-2}\|\nabla_p^2g(s)\|_\infty\right)\,ds\\
	\lesssim\ &
	1+ \int_1^ts^{-2}\ln(s)\mcD_2(s)\,ds.
	\end{split}
	\end{equation}
Integrating \eqref{eq:pp-commut} with  $\partial_{m}\partial_{n}=\partial_{x_m}\partial_{x_n}$  along the characteristics of $\mcV_g$  and estimating, we find
	\begin{equation}
	\label{eq:gxx}
	\begin{split}
	\|\nabla_x^2g(t)\|_{\infty}
	\lesssim\ &
	1+\int_1^ts\left(s^{-4}\ln(s)\|\nabla_xg(s)\|_\infty+s^{-3}\ln(s)\|\nabla_x^2g(s)\|_\infty\right)\,ds\\
	&+
	\int_1^t\left(s^{-4}\ln(s)\|\nabla_pg(s)\|_\infty+s^{-3}\ln(s)\|\nabla_p\nabla_xg(s)\|_\infty\right)\,ds\\
	\lesssim\ &
	1+\int_1^ts\left(s^{-4}\ln(s)+s^{-3}\ln(s)\|\nabla_x^2g(s)\|_\infty\right)\,ds\\
	&+
	\int_1^t\left(s^{-4}\ln^3(s)+s^{-3}\ln(s)\|\nabla_p\nabla_xg(s)\|_\infty\right)\,ds\\
	\lesssim\ &
	1+\int_1^ts^{-2}\ln(s)\left(\|\nabla_x^2g(s)\|_\infty+s^{-1}\|\nabla_p\nabla_xg(s)\|_\infty\right)\,ds\\
	\lesssim\ &
	1+\int_1^ts^{-2}\ln(s)\mcD_2(s)\,ds.
	\end{split}
	\end{equation}
Adding the estimates \eqref{eq:gpp2}, \eqref{eq:gpx}, and \eqref{eq:gxx} yields
$$ \mcD_2(t) \lesssim
	1+\int_1^ts^{-2}\ln(s)\mcD_2(s)\,ds,$$
and upon using Gr\"onwall's inequality, we conclude
$\mcD_2(t) \lesssim 1.$
This further implies 
$$ \|\nabla_x^2g(t)\|_\infty \lesssim 1, \qquad \|\nabla_p\nabla_xg(t)\|_\infty \lesssim t, \qquad \mathrm{and} \qquad  \|\nabla_p^2g(t)\|_\infty \lesssim t^2.$$
We use these bounds to improve the estimate \eqref{eq:gpx}. Indeed, returning to this inequality and inserting the above estimates gives
	\begin{align*}
	\|\nabla_p\nabla_xg(t)\|_{\infty}
	\lesssim\ &
	1+\int_1^ts\left(s^{-3}\ln(s)\|\nabla_xg(s)\|_\infty+s^{-3}\ln(s)\|\nabla_p\nabla_xg(s)\|_\infty+s^{-2}\ln(s)\|\nabla_x^2g(s)\|_\infty\right)\,ds\\
	&+
	\int_1^t\left(s^{-3}\ln(s)\|\nabla_pg(s)\|_\infty+s^{-2}\ln(s)\|\nabla_p\nabla_xg(s)\|_\infty+s^{-3}\ln(s)\|\nabla_p^2g(s)\|_\infty\right)\,ds\\
	\lesssim\ &
	1+\int_1^t\left(s^{-2}\ln(s)+s^{-2}\ln(s)\|\nabla_p\nabla_xg(s)\|_\infty+s^{-1}\ln(s)\right)\,ds\\
	&+
	\int_1^t\left(s^{-3}\ln^3(s)+s^{-2}\ln(s)\|\nabla_p\nabla_xg(s)\|_\infty+s^{-1}\ln(s)\right)\,ds\\
	\lesssim\ &
	\ln^2(t)+\int_1^ts^{-2}\ln(s)\|\nabla_p\nabla_xg(s)\|_\infty\,ds.\\
	\end{align*}
Using Gr\"onwall's inequality again yields
	\begin{align*}
	\|\nabla_p\nabla_xg(t)\|_{\infty}
	\lesssim
	\ln^2(t)\ \exp \left (\int_1^ts^{-2}\ln(s)\,ds \right )
	\lesssim
	\ln^2(t).
	\end{align*}
Using this estimate of the mixed partial derivatives within \eqref{eq:gpp1}, we have the improved estimate
	\begin{align*}
	\|\nabla_p^2g(t)\|_{\infty}
	\lesssim\ &
	1+\int_1^t \left [\left (s^3 \mcK_2(s) + s^2\mcK_1(s) + s\mcK_0(s) \right ) \|\nabla_xg(s)\|_\infty + \left ( s^2\mcK_1(s) + s\mcK_0(s) \right )\|\nabla_p\nabla_xg(s)\|_\infty\right]\,ds\\
	&+
	\int_1^t\left[ \left (s^2 \mcK_2(s) + s\mcK_1(s) + \mcK_0(s) \right )\|\nabla_pg(s)\|_\infty+ \left ( s\mcK_1(s) + \mcK_0(s) \right )\|\nabla_p^2g(s)\|_\infty\right]\,ds\\
	\lesssim\ &
	1+\int_1^t \left (s^3 \mcK_2(s) + s^2 \ln^2(s)\mcK_1(s) + s \ln^2(s)\mcK_0(s) \right )\,ds + \int_1^t s^{-2}\ln(s)\|\nabla_p^2g(s)\|_\infty\,ds\\
	\lesssim\ &
	\ln^4(t)+\int_1^ts^{-2}\ln(s)\|\nabla_p^2g(s)\|_\infty\,ds,
	\end{align*}
which leads to the final application of Gr\"onwall's inequality, namely
	\begin{align*}
	\|\nabla_p^2g(t)\|_{\infty}
	\lesssim
	\ln^4(t)\ \exp \left (\int_1^ts^{-2}\ln(s)\,ds \right )
	\lesssim
	\ln^4(t).
	\end{align*}
Hence, we find
$$\mcG_2(t) \lesssim \ln^4(t),$$
and the proof is complete.
\end{proof}

Next, we use the estimate of $\mcG_2(t)$ to establish the convergence rate of derivatives of the spatial average as $t \to \infty$.
\begin{lemma}
\label{DFunif}
The spatial average further satisfies
$\nabla_pF(t,p) \to \nabla_pF_\infty(p)$ uniformly as $t \to \infty$
with
$$\| \nabla_p F(t) - \nabla_pF_\infty \|_\infty \lesssim t^{-1}\ln^7(t).$$
\end{lemma}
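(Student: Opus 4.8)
The plan is to carry out, one momentum derivative higher, the argument already used for $F$ in Lemma~\ref{Funif}. Writing $\nabla_pF(t,p)=\int\nabla_pg(t,x,p)\,dx$, I will show that $\|\partial_t\nabla_pF(t)\|_\infty$ is integrable in time at the rate $t^{-2}\ln^7(t)$, integrate from $t$ to $\infty$ to obtain a uniform limit, and then identify that limit with $\nabla_pF_\infty$ using the uniform convergence $F(t)\to F_\infty$ and the fact that $F_\infty\in C_c^2(\bfR^3)$ from Lemma~\ref{Funif}. Differentiating under the integral sign and interchanging $\partial_t$ with $\nabla_p$ is justified by the $C^2$ bounds on $g$ available in this section.

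To compute $\partial_t\nabla_pF$, I differentiate the translated Vlasov equation~\eqref{VMg}, i.e. $\partial_tg=t\mathbb{A}(p)K(t,x+v(p)t,p)\cdot\nabla_xg-K(t,x+v(p)t,p)\cdot\nabla_pg$, in $p_m$ and integrate in $x$. The derivative $\partial_{p_m}$ lands on $\mathbb{A}(p)$, on the two copies of $K$, and on $\nabla_xg$ or $\nabla_pg$, producing five terms. As in the proof of Lemma~\ref{D2g} one has $\partial_{p_m}[K(t,x+v(p)t,p)]=t\,\partial_{x_i}K\,\mathbb{A}_{mi}+\partial_{p_m}(v(p))\times B$, so that $|\partial_{p_m}[K]|\lesssim t\mcK_1(t)+\mcK_0(t)\lesssim t^{-2}\ln(t)$ and $|\partial_{x_n}\partial_{p_m}[K]|\lesssim t\mcK_2(t)+\mcK_1(t)\lesssim t^{-3}\ln(t)$ on the spatial support of $g$. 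In each of the three terms that carry the factor $t$ in front of a field — the one where $\partial_{p_m}$ hits $\nabla_xg$, as well as those where it hits $\mathbb{A}$ or $K$ — I integrate by parts in $x$, moving the $x$-derivative off $g$; since $\mathbb{A}(p)$ is independent of $x$ this exchanges $\nabla_xg$ for $\nabla_xK$ (with $\|\nabla_xK\|\lesssim\mcK_1(t)$ on $\bS_g(t)$) and $\partial_{p_m}[K]$ for $\partial_x\partial_{p_m}[K]$, in each case absorbing the factor $t$.

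Every surviving $x$-integral is then bounded by an $L^\infty$ norm of a derivative of $g$ times $\mcR(t)^3$ (since $g(t,\cdot,p)$ is supported in a ball of radius $\lesssim\mcR(t)$), except for the terms in which $\int g\,dx$ appears directly, which are controlled by $F(t,p)\le\|F(t)\|_\infty\lesssim1$ from~\eqref{FLinfty}. Using $\mcR(t)\lesssim\ln(t)$ (Lemma~\ref{Xsupp}), $\|\nabla_xg(t)\|_\infty\lesssim1$ and $\mcG_1(t)\lesssim\ln^2(t)$ (Lemmas~\ref{Dvg} and~\ref{D2g}), $\mcG_2(t)\lesssim\ln^4(t)$ (Lemma~\ref{D2g}), together with $\mcK_0(t)\lesssim t^{-2}$, $\mcK_1(t)\lesssim t^{-3}\ln(t)$ (Lemma~\ref{LPrelim}) and the standing hypothesis $\mcK_2(t)\lesssim t^{-4}\ln(t)$ in~\eqref{K2}, the slowest of the five contributions is $\int|K|\,|\nabla_p^2g|\,dx\lesssim\mcK_0(t)\,\mcR(t)^3\,\mcG_2(t)\lesssim t^{-2}\ln^7(t)$, and the others are no larger. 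Hence $\|\partial_t\nabla_pF(t)\|_\infty\lesssim t^{-2}\ln^7(t)$.

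Since $s\mapsto\partial_s\nabla_pF(s)$ is thus integrable on $[t,\infty)$, the Cauchy criterion produces a continuous function $G$ with $\|\nabla_pF(t)-G\|_\infty\le\int_t^\infty\|\partial_s\nabla_pF(s)\|_\infty\,ds\lesssim t^{-1}\ln^7(t)$; combined with the uniform convergence $F(t)\to F_\infty$ of Lemma~\ref{Funif} this forces $G=\nabla_pF_\infty$, which completes the proof. The main obstacle is the bookkeeping in the middle step: one must check that \emph{every} term carrying the factor $t$ from the translated transport operator genuinely gains an extra $t^{-1}$ after the integration by parts in $x$ — otherwise $\partial_t\nabla_pF$ would decay only like $t^{-1}$ (times logarithms) and fail to be integrable. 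Once that is done, the rates of Lemmas~\ref{Dvg} and~\ref{D2g} cost only logarithmic losses, and the claimed $t^{-1}\ln^7(t)$ follows.
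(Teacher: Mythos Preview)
Your argument is correct and follows essentially the same route as the paper: differentiate \eqref{VMg} in $p$, integrate in $x$ with an integration by parts on every term carrying the prefactor $t$, and bound the result by $t(\mcK_1+t\mcK_2)F + (\mcK_0+t\mcK_1)\mcG_1\mcR^3 + \mcK_0\mcG_2\mcR^3\lesssim t^{-2}\ln^7(t)$, after which integrability yields the limit and its identification with $\nabla_pF_\infty$. Your bookkeeping of the five terms and the identification of $\mcK_0(t)\mcR(t)^3\mcG_2(t)$ as the dominant contribution match the paper's computation exactly.
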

\begin{proof}
To prove the derivative estimate, we proceed in a similar fashion to the estimate of $\partial_t F$ in Lemma \ref{Funif}.
Upon taking $\partial_{p_i}$ within \eqref{VMg}, integrating in $x$, and integrating by parts, we find 
\begin{eqnarray*}
\partial_t \int \partial_{p_i} g(t,x,p) \ dx & = & t  \int \partial_{p_i} \left (\mathrm{tr} \left [ \mathbb{A}(p) \nabla_x K(t, x+v(p)t, p) \right ] g(t,x,p) \right ) dx\\
& &  + \int  \partial_{p_i} \left (K(t, x+v(p)t,p) \cdot \nabla_p g(t,x,p) \right ) dx.
\end{eqnarray*}
Hence, using the estimates of Lemmas \ref{LPrelim}, \ref{Xsupp}, \ref{Dvg}, and \ref{D2g} yields 
\begin{eqnarray*}
\left | \partial_t \partial_{p_i} F(t,p) \right | & \lesssim & t \left (\mcK_1(t)  + t \mcK_2(t)\right )F(t,p) + \left (\mcK_0(t) + t\mcK_1(t) \right ) \mcG_1(t)\mcR(t)^3 + \mcK_0(t) \mcG_2(t)\mcR(t)^3 \\
& \lesssim & t^{-2} \ln(t) F(t,p) + t^{-2}\ln^6(t) + t^{-2}\ln^7(t)\\
& \lesssim & t^{-2}\ln^7(t).
\end{eqnarray*}
Of course, this implies that $ \| \partial_t \nabla_p F(t)\|_\infty$ is integrable,
and as before, establishes the existence of a limit $\mathfrak{F}$ with the estimate 
$$ \| \nabla_pF(t) - \mathfrak{F} \|_\infty  \lesssim t^{-1}\ln^7(t).$$
Finally, we find $\mathfrak{F}(p) = \nabla_p F_\infty(p)$ by the uniqueness of the uniform limit.
\end{proof}

Now, estimates of the large time behavior of density derivatives are needed in order to obtain a more refined understanding of the field behavior,
as spatial derivatives of $\rho$ and both spatial and time derivatives of $j$ appear as source terms in the inhomogeneous wave equations for $E$ and $B$ induced by Maxwell's equations.
As $F_\infty \in C_c^2(\bfR^3)$, we can define the limiting derivatives of $\rho$ and $j$ in terms of derivatives of $F_\infty$.
More specifically, we have for $q \in \Gamma_1$
$$\partial_{q_k} \rho_\infty(q) = \partial_{q_k} \left [ \mcD \left (v^{-1}(q) \right ) F_\infty \left (v^{-1}(q) \right )  \right ]$$
and
$$\partial_{q_k} j^\ell_\infty(q) = \partial_{q_k} \left [ q^\ell \mcD \left (v^{-1}(q) \right )  F_\infty \left (v^{-1}(q) \right )  \right ]$$
for every $k,\ell = 1,2,3$,
respectively.
Additionally, due to the compact support of $F_\infty$, these quantities are smoothly extended by 0 to $q \in \bfR^3$.
As within the previous section, in the case of multiple species the quantities $F(t,p)$ and $F_\infty(p)$ are replaced by sums over $\alpha$ of $F^\alpha(t,p)$ and $F^\alpha_\infty(p)$, and their respective derivatives, within the following lemmas.
Then, we have the following convergence estimates on derivatives.

\begin{lemma}
\label{LDensityderivative}
The spatial derivatives of the charge density satisfy 
$$\sup_{x \in \bfR^3} \left | t^4 \partial_{x_i}\rho(t,x) -  \partial_{q_i} \rho_\infty \left (\frac{x}{t} \right ) \right | \lesssim \|F(t) - F_\infty \|_\infty + \| \nabla_pF(t) - \nabla_p F_\infty\|_\infty + t^{-1} \mcR(t)^4 \mcG_2(t)$$
for every $i = 1,2,3$.
Furthermore, in view of the previous lemmas, we have
$$\sup_{x \in \bfR^3} \left | t^4 \partial_{x_i}\rho(t,x) -  \partial_{q_i} \rho_\infty \left (\frac{x}{t} \right ) \right | \lesssim t^{-1} \ln^8(t)$$
for every $i = 1,2,3$.
\end{lemma}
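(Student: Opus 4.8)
\emph{Proof plan.} The plan is to differentiate the integral representation of $\rho$ from the proof of Lemma~\ref{LDensity} and to exploit the extra factor $t^{-1}$ that the chain rule produces when differentiating the velocity argument, which is precisely what upgrades the decay from $t^{-3}$ to $t^{-4}$. Recall from \eqref{rhop} that with $\mcH(t,y,p)=\mcD(p)\,g(t,y,p)$ one has $\rho(t,x)=t^{-3}\int \mcH(t,y,v^{-1}((x-y)/t))\,dy$. Since $f(t)$ is supported in $|x|\lesssim\gamma t$ (Lemma~\ref{LPrelim}) and $\partial_{q_i}\rho_\infty(x/t)$ vanishes outside this region, it suffices to bound the difference for $|x|\lesssim\gamma t$; there the map $y\mapsto v^{-1}((x-y)/t)$ is smooth on the effective domain $|y|\lesssim\mcR(t)\lesssim\ln t$, so we may differentiate under the integral sign. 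Using $\partial_{q_j}(v^{-1})_k=\mathbb{B}_{kj}$ and $q=(x-y)/t$ gives
\[
t^4\,\partial_{x_i}\rho(t,x)=\int \mcH_i\!\left(t,y,v^{-1}\!\left(\tfrac{x-y}{t}\right)\right)dy,\qquad \mcH_i(t,y,p):=\sum_k(\partial_{p_k}\mcH)(t,y,p)\,\mathbb{B}_{ki}(v(p)).
\]
By Lemma~\ref{D2g}, together with the smoothness and boundedness of $\mcD$ and of $\mathbb{B}\circ v$ on the compact momentum support, $\mcH_i$ is continuous and bounded with $\|\nabla_p\mcH_i(t)\|_\infty\lesssim\mcG_2(t)$, and it is supported in $|y|\lesssim\mcR(t)$.

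I would then write $t^4\partial_{x_i}\rho(t,x)-\partial_{q_i}\rho_\infty(x/t)=I+II$, where
\[
I=\int\left[\mcH_i\!\left(t,y,v^{-1}\!\left(\tfrac{x-y}{t}\right)\right)-\mcH_i\!\left(t,y,v^{-1}\!\left(\tfrac{x}{t}\right)\right)\right]dy
\]
and $II$ is the frozen-argument integral minus $\partial_{q_i}\rho_\infty(x/t)$. The term $I$ is handled as in Lemma~\ref{LDensity}: since $\mcH_i$ is supported in $|y|\lesssim\mcR(t)$, Lemma~\ref{LH} gives $|I|\lesssim t^{-1}\mcR(t)^4\|\nabla_p\mcH_i(t)\|_\infty\lesssim t^{-1}\mcR(t)^4\mcG_2(t)$. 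For $II$, the key observation is that, because the velocity argument $v^{-1}(x/t)$ no longer depends on $y$, differentiation under the integral sign yields $\int(\partial_{p_k}\mcH)(t,y,p)\,dy=\partial_{p_k}[\mcD(p)F(t,p)]$; combined with $v(v^{-1}(q))=q$ this gives
\[
\int\mcH_i\!\left(t,y,v^{-1}\!\left(\tfrac{x}{t}\right)\right)dy=\sum_k\mathbb{B}_{ki}\!\left(\tfrac{x}{t}\right)\partial_{p_k}\!\left[\mcD F(t)\right]\!\left(v^{-1}\!\left(\tfrac{x}{t}\right)\right).
\]
On the other hand, the chain rule gives $\partial_{q_i}\rho_\infty(q)=\sum_k\mathbb{B}_{ki}(q)\,\partial_{p_k}[\mcD F_\infty](v^{-1}(q))$, so $II$ equals $\sum_k\mathbb{B}_{ki}(x/t)\,\partial_{p_k}[\mcD(F(t)-F_\infty)](v^{-1}(x/t))$; since $\mcD$, $\nabla_p\mcD$ and $\mathbb{B}_{ki}$ are bounded for $|x/t|\lesssim\gamma<1$, this is $\lesssim\|F(t)-F_\infty\|_\infty+\|\nabla_pF(t)-\nabla_pF_\infty\|_\infty$. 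Taking the supremum over $x$ and adding the two bounds proves the first displayed estimate.

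The second estimate follows by inserting the rates already established: $\mcR(t)\lesssim\ln t$ (Lemma~\ref{Xsupp}), $\mcG_2(t)\lesssim\ln^4 t$ (Lemma~\ref{D2g}), $\|F(t)-F_\infty\|_\infty\lesssim t^{-1}\ln^5 t$ (Lemma~\ref{Funif}), and $\|\nabla_pF(t)-\nabla_pF_\infty\|_\infty\lesssim t^{-1}\ln^7 t$ (Lemma~\ref{DFunif}); the dominant contribution is $t^{-1}\mcR(t)^4\mcG_2(t)\lesssim t^{-1}\ln^8 t$. The only genuine obstacle compared with Lemma~\ref{LDensity} is organizational: one must verify that the chain rule produces exactly one power of $t^{-1}$, that the frozen-argument term reassembles into a $p$-derivative of $\mcD F$ so that convergence of both $F$ and $\nabla_pF$ can be invoked (rather than just of $F$), and that $\nabla_p\mcH_i$, which now involves second momentum derivatives of $g$, is controlled by $\mcG_2$ rather than $\mcG_1$. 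Everything else is a routine repetition of the argument in Lemma~\ref{LDensity}.
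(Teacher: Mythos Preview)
Your proposal is correct and follows essentially the same approach as the paper: differentiate the representation \eqref{rhop} via the chain rule to extract the extra factor of $t^{-1}$, package the resulting integrand as $\mathbb{B}_{ik}(v(p))\,\partial_{p_k}[\mcD(p)g(t,y,p)]$, and split into the same two pieces $I$ and $II$, estimating $I$ by Lemma~\ref{LH} with $\mcG_2(t)$ and $II$ by the convergence of $F$ and $\nabla_pF$. The only cosmetic difference is that the paper defines its $\mcH$ directly as your $\mcH_i$, but since $\mathbb{B}$ is symmetric the two formulations coincide.
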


\begin{proof}
Where necessary, we use the Einstein summation convention. As in previous lemmas, it suffices to consider only $|x| \lesssim \gamma t$ due to the growth of the spatial support of $f$. 
We begin by taking a derivative with respect to $x_i$ within \eqref{rhop} to find
$$\partial_{x_i}\rho(t,x) 
= t^{-4}\int \mathbb{B}_{ik}(v(r)) \partial_{r_k} \left [ \mcD  \left ( r \right )  g  \left (t, y, r\right ) \right ] \biggr |_{r = v^{-1} \left (\frac{x-y}{t} \right )} dy.$$
%
%
Furthermore, we have
$$\partial_{q_i} \rho_\infty(q) = \partial_{q_i} \left [ \mcD \left ( v^{-1}(q) \right ) F_\infty\left ( v^{-1}(q) \right ) \right ] =  \mathbb{B}_{ik}(v(r))  \partial_{r_k} \left [ \mcD(r) F_\infty(r) \right ] \biggr |_{r = v^{-1}(q)}.$$
Letting
$$\mcH(t,y,p) = \mathbb{B}_{ik}(v(p))\partial_{p_k} \left [ \mcD(p) g(t, y, p) \right ]$$
and subtracting the above expressions for the charge density and its expected limit we find
$$t^4\partial_{x_i}\rho(t,x)- \partial_{q_i}\rho_\infty\left( \frac{x}{t}\right) = \int \mcH \left (t,y, v^{-1}\left (\frac{x-y}{t} \right ) \right ) \ dy - \mathbb{B}_{ik}(v(p)) \partial_{p_k} \left [ \mcD(p)F_\infty(p) \right ] \biggr |_{p = v^{-1}\left (\frac{x}{t} \right) }.$$
We split this into two pieces so that
$$ \left | t^4\partial_{x_i}\rho(t,x) - \partial_{q_i}\rho_\infty\left( \frac{x}{t}\right) \right | \leq I + II$$
where
$$I  = \int \left | \mcH \left (t,y, v^{-1}\left (\frac{x-y}{t} \right ) \right ) -\mcH \left (t,y, v^{-1}\left (\frac{x}{t} \right ) \right )\right | \ dy$$
and
$$II = \left | \mathbb{B}_{ik} (v(p) ) \right | \  \left | \partial_{p_k} \left [ \mcD(p) \left (F(t, p) - F_\infty(p) \right ) \right ] \right | \biggr |_{p= v^{-1} \left (\frac{x}{t} \right )}$$
as $\int g(t,y,p) \ dy = F(t,p)$.
We estimate $I$ using Lemma \ref{LH}.
As the time-independent portions of $\mcH(t,y,p)$ and their derivatives are uniformly bounded, we find
$\Vert \nabla_p \mcH(t) \Vert_\infty \lesssim \mcG_2(t)$.
Combining this with the estimates of Lemmas \ref{Xsupp} and \ref{D2g} then gives
$$I \lesssim  t^{-1} \mcR(t)^4  \mcG_2(t) \lesssim t^{-1} \ln^8(t).$$
The estimate of $II$ uses $\left | \frac{x}{t} \right | \lesssim \gamma < 1$, the compact support of $F(t)$ and $F_\infty$, and Lemmas \ref{Funif} and \ref{DFunif}, to conclude
$$ II \lesssim \|F(t) - F_\infty \|_\infty + \| \nabla_pF(t) - \nabla_pF_\infty \|_\infty \lesssim t^{-1} \ln^7(t).$$
Combining these estimates then yields the result.
\end{proof}

\begin{lemma}
\label{LCurrentderivative}
The spatial derivatives of the current density satisfy  
$$\sup_{x\in \bfR^3} \left | t^4 \partial_{x_i}j(t,x) -  \partial_{q_i}j_\infty\left (\frac{x}{t} \right) \right | \lesssim \|F(t) - F_\infty \|_\infty + \| \nabla_pF(t) - \nabla_p F_\infty\|_\infty + t^{-1} \mcR(t)^4  \mcG_2(t)$$
for every $i = 1,2,3$.
Furthermore, in view of the previous lemmas, we have
$$\sup_{x\in \bfR^3} \left | t^4 \partial_{x_i}j(t,x) -  \partial_{q_i}j_\infty\left (\frac{x}{t} \right) \right |  \lesssim t^{-1} \ln^8(t)$$
for every $i = 1,2,3$.
Additionally, the time derivative of the current density satisfies
$$\sup_{x\in \bfR^3} \left | t^4 \partial_tj(t,x) +\left [3j_\infty\left (\frac{x}{t} \right) + \frac{x}{t} \cdot \nabla_qj_\infty\left (\frac{x}{t} \right )\right ] \right | \lesssim 
\|F(t) - F_\infty \|_\infty + \| \nabla_pF(t) - \nabla_p F_\infty\|_\infty + t^{-1} \mcR(t)^4 \mcG_2(t).$$
and in view of the previous lemmas, we have
$$\sup_{x\in \bfR^3} \left | t^4 \partial_tj(t,x) +\left [3j_\infty\left (\frac{x}{t} \right) + \frac{x}{t} \cdot \nabla_qj_\infty\left (\frac{x}{t} \right )\right ] \right | \lesssim t^{-1} \ln^8(t).$$
\end{lemma}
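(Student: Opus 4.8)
The plan is to treat the two parts separately. For the spatial derivatives of $j$, I would run the argument of Lemma~\ref{LDensityderivative} essentially verbatim, simply carrying the extra weight $v(p)$: after the change of variables $y = x - v(p)t$ in \eqref{jp} and a differentiation in $x_i$ one writes $t^4\partial_{x_i}j(t,x) = \int \mcH\left(t,y,v^{-1}\left(\tfrac{x-y}{t}\right)\right)\,dy$ with $\mcH(t,y,p) = \mathbb{B}_{ik}(v(p))\,\partial_{p_k}\!\left[v(p)\mcD(p)g(t,y,p)\right]$, peels off the limiting value $\partial_{q_i}j_\infty(x/t)$ as in that lemma, and estimates the two resulting pieces by Lemma~\ref{LH} (using $\|\nabla_p\mcH(t)\|_\infty\lesssim\mcG_2(t)$ together with Lemmas~\ref{Xsupp} and~\ref{D2g}) and by Lemmas~\ref{Funif} and~\ref{DFunif}. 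This produces the claimed bound $\lesssim\|F(t)-F_\infty\|_\infty+\|\nabla_pF(t)-\nabla_pF_\infty\|_\infty+t^{-1}\mcR(t)^4\mcG_2(t)\lesssim t^{-1}\ln^8(t)$.

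For the time derivative the key idea is not to differentiate the self-similar approximation $j(t,x)\approx t^{-3}j_\infty(x/t)$ in $t$ — the time derivative of its error is not controlled — but to route $\partial_t j$ through the Vlasov equation. Since $v(p)$ is $t$-independent, $\partial_t j^i(t,x) = \int v^i(p)\,\partial_t f(t,x,p)\,dp$; I would substitute $\partial_t f = -v\cdot\nabla_x f - K\cdot\nabla_p f$ and integrate by parts in $p$ to obtain
$$\partial_t j^i(t,x) = -\partial_{x_k}T^{ik}(t,x) + \int \big(\mathbb{A}(p)K(t,x,p)\big)^i f(t,x,p)\,dp, \qquad T^{ik}(t,x):=\int v^i(p)v^k(p)f(t,x,p)\,dp,$$
where one uses $\nabla_p\cdot K = 0$ (immediate, since $\mathbb{A}(p)$ is symmetric and the cross product antisymmetric), and boundary terms vanish by the compact $p$-support of $f$. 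On $\bS_f(t)$ the field obeys $|K|\lesssim\mcK_0(t)\lesssim t^{-2}$ by Lemma~\ref{LPrelim}, while $\rho(t,x)\lesssim t^{-3}$, so the Lorentz-force remainder is $\lesssim t^{-2}\rho(t,x)\lesssim t^{-5}$ uniformly in $x$; after multiplication by $t^4$ it is $\lesssim t^{-1}$ and is absorbed into the error. (As in the earlier lemmas, both sides vanish once $|x|$ exceeds the reach of $\bS_f(t)$, so it suffices to take $|x|\lesssim\gamma t$.)

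It then remains to analyze $\partial_{x_k}T^{ik}$, which is a spatial derivative of a density of exactly the type handled in Lemma~\ref{LDensityderivative}, with $\rho$ replaced by $T^{ik}=\int v^iv^kf\,dp$ and weight $v^i(p)v^k(p)$. The same change of variables, differentiation, splitting, and estimates (Lemma~\ref{LH} plus Lemmas~\ref{Funif} and~\ref{DFunif}) then give, with self-similar limit $T^{ik}_\infty(q):=q^iq^k\rho_\infty(q)$ since $v^\ell(v^{-1}(q))=q^\ell$,
$$\sup_{x\in\bfR^3}\left|t^4\partial_{x_k}T^{ik}(t,x) - \partial_{q_k}T^{ik}_\infty\!\left(\tfrac{x}{t}\right)\right| \lesssim \|F(t)-F_\infty\|_\infty+\|\nabla_pF(t)-\nabla_pF_\infty\|_\infty+t^{-1}\mcR(t)^4\mcG_2(t)\lesssim t^{-1}\ln^8(t).$$
Finally, using $j_\infty(q)=q\rho_\infty(q)$, a short computation gives the pointwise identity $\partial_{q_k}\big(q^iq^k\rho_\infty(q)\big) = 4q^i\rho_\infty(q) + q^iq^k\partial_{q_k}\rho_\infty(q) = 3j^i_\infty(q) + q\cdot\nabla_q j^i_\infty(q)$, and combining the three displays yields $\sup_x\big|t^4\partial_t j^i(t,x) + \big[3j^i_\infty(x/t)+\tfrac{x}{t}\cdot\nabla_q j^i_\infty(x/t)\big]\big|\lesssim t^{-1}\ln^8(t)$, as claimed.

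The main obstacle is conceptual rather than computational: a time derivative cannot be reached by the density-type change of variables on its own, so it must be commuted through the Vlasov equation and traded for $-\partial_{x_k}T^{ik}$, at the price of a new Lorentz-force remainder. The smallness of that remainder is exactly what forces us to invoke Lemma~\ref{LPrelim} — that $\bS_f(t)$ stays a fixed fraction inside the light cone, so $|K|\lesssim t^{-2}$ there. Everything else is a routine reprise of the machinery already built for $\rho$ and $j$, resting on Lemmas~\ref{LH}, \ref{Xsupp}, \ref{Dvg}, \ref{D2g}, \ref{Funif}, and~\ref{DFunif}.
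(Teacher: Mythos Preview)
Your treatment of the spatial derivatives $\partial_{x_i}j$ is identical to the paper's: define $\mcH(t,y,p)=\mathbb{B}_{ik}(v(p))\,\partial_{p_k}[v(p)\mcD(p)g(t,y,p)]$, split off the limit, and invoke Lemmas~\ref{LH}, \ref{Xsupp}, \ref{D2g}, \ref{Funif}, \ref{DFunif}. No issues there.

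For $\partial_t j$ your route is correct but genuinely different from the paper's. The paper differentiates the representation~\eqref{jp} directly in $t$, obtaining
\[
\partial_t j^\ell = -3t^{-1}j^\ell - \tfrac{x}{t}\cdot\nabla_x j^\ell + \text{(two remainders)},
\]
so that the limiting profile $-3j_\infty-q\cdot\nabla_q j_\infty$ emerges from Lemma~\ref{LCurrent} and the first part of the present lemma. The first remainder (carrying $y_i/t$) is immediate; the second contains $\partial_t g$, which is then replaced via the translated Vlasov equation~\eqref{VMg} and split into three further pieces, one of which requires an integration by parts in $y$. Your approach instead applies the Vlasov equation at the level of $f$ to obtain the moment identity $\partial_t j^i=-\partial_{x_k}T^{ik}+\int(\mathbb{A}K)^i f\,dp$, bounds the Lorentz remainder by $\mcK_0(t)\,\|\rho(t)\|_\infty\lesssim t^{-5}$, and reduces the main term to a spatial-derivative estimate on the second moment $T^{ik}$, for which the machinery of Lemma~\ref{LDensityderivative} applies verbatim with limit $T^{ik}_\infty(q)=q^iq^k\rho_\infty(q)$. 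The algebraic identity $\partial_{q_k}(q^iq^k\rho_\infty)=3j^i_\infty+q\cdot\nabla_q j^i_\infty$ then closes the argument. This is shorter and conceptually cleaner---it is essentially the local momentum-balance law---and it avoids the $IV_A$--$IV_C$ decomposition entirely. The paper's approach has the minor advantage of staying within the $g$-variables throughout, but both give the same $t^{-1}\ln^8(t)$ rate. One small remark: your Lorentz remainder contributes a bare $t^{-1}$ after scaling, which is not literally of the form $t^{-1}\mcR(t)^4\mcG_2(t)$; however, since $\mcG_2(t)\geq 1$, this is harmless (and the paper's own $IV$-estimates share the same feature).
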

\begin{proof}
Where necessary, we use the Einstein summation convention.
As before, we need only consider $|x| \lesssim \gamma t$ due to the growth of the support of $f$. 
To prove the first result, we begin by rewriting the $\ell$th component of \eqref{jp} as
$$j^\ell(t,x) = \frac{1}{t^3}  \int v^\ell(r) \mcD(r) g\left (t, y, r \right ) \biggr |_{r = v^{-1}\left (\frac{x-y}{t} \right )} dy.$$
Then, taking an $x_i$ derivative within this expression yields
\begin{equation}
\label{nabla_j}
\partial_{x_i}j^\ell(t,x)  = t^{-4}\int \mathbb{B}_{ik}\left (v(r) \right ) \partial_{r_k} \left [ v^\ell(r) \mcD(r) g \left (t, y, r \right )  \right ] \biggr |_{r = v^{-1}\left (\frac{x-y}{t} \right )} dy.
\end{equation}
%
%
Furthermore, we have
$$\partial_{q_i} j^\ell_\infty(q) = \partial_{q_i} \left [ v^\ell\left ( v^{-1}(q) \right ) \mcD \left( v^{-1}(q) \right ) F_\infty\left ( v^{-1}(q) \right ) \right ] =  \bigg(\mathbb{B}_{ik}(v(r))  \partial_{r_k} \left [ v^\ell(r) \mcD(r) F_\infty(r) \right ] \bigg)\biggr |_{r = v^{-1}(q)}.$$

Next, we let
$$\mcH(t,y,p) = \mathbb{B}_{ik}(v(p)) \partial_{p_k} \left [ v^\ell(p) \mcD(p) g(t, y, p) \right ],$$
and subtract the derivative terms to find
$$t^4\partial_{x_i}j^\ell(t,x)  - \partial_{q_i}j^\ell_\infty\left( \frac{x}{t}\right) = \int \mcH \left (t,y, v^{-1}\left (\frac{x-y}{t} \right ) \right ) dy - \mathbb{B}_{ik}(v(p))
\partial_{p_k} \left [ v^\ell(p) \mcD(p) F_\infty(p) \right ] \biggr |_{p = v^{-1}\left (\frac{x}{t} \right) }.$$
Similar to the previous lemma, we split this into two pieces so that
$$ \left | t^4\partial_{x_i}j^\ell(t,x)  - \partial_{q_i}j^\ell_\infty\left( \frac{x}{t}\right) \right | \leq I + II$$
where
$$I  = \int \left | \mcH \left (t,y, v^{-1}\left (\frac{x-y}{t} \right ) \right )  - \mcH \left (t,y, v^{-1}\left (\frac{x}{t} \right ) \right ) \right | \ dy$$
and
$$II = \left | \mathbb{B}_{ik}\left (v(p) \right ) \right | \  \left | \partial_{p_k}\left [ v^\ell(p)\mcD(p) \left ( F(t, p) - F_\infty(p) \right ) \right ] \right | \biggr |_{p= v^{-1} \left (\frac{x}{t} \right )}$$
as $\int g(t,y,p) \ dy = F(t,p)$.
As in previous lemmas, we estimate $I$ using Lemma \ref{LH}.
Because the time-independent portions of $\mcH(t,y,p)$ and their derivatives are uniformly bounded, this
yields the estimate
$\Vert \nabla_p \mcH(t) \Vert_\infty \lesssim \mcG_2(t)$.
Combining this with the estimates of Lemmas \ref{LH}, \ref{Xsupp}, and \ref{D2g} then gives
$$I \lesssim  t^{-1} \mcR(t)^4 \mcG_2(t) \lesssim t^{-1} \ln^8(t)$$
as $v(p)$, $\mcD(p)$, and all of their derivatives are uniformly bounded.
The term $II$ uses $\left | \frac{x}{t} \right | \lesssim \gamma < 1$, the compact support of $F(t)$ and $F_\infty$, and Lemmas \ref{Funif} and \ref{DFunif}  so that 
$$ II \lesssim \|F(t) - F_\infty \|_\infty + \| \nabla_pF(t) - \nabla_pF_\infty \|_\infty \lesssim t^{-1} \ln^7(t).$$
The combination of these estimates yields the first result.

Turning to the second result, we perform a similar computation and invoke the first result where needed.
As before, we write the $\ell$th component of the current density as
$$j^\ell(t,x) = \frac{1}{t^3}  \int v^\ell(r) \mcD(r) g\left (t, y, r \right ) \biggr |_{r = v^{-1}\left ( \frac{x-y}{t} \right )} dy.$$
Taking a $t$ derivative, we find
\begin{align*}
\partial_tj^\ell(t,x)  &= -3 t^{-1} j^\ell(t,x) - t^{-4}\int  \frac{x_i-y_i}{t}\mathbb{B}_{ik}\left (v(r) \right ) \partial_{r_k} \left [ v^\ell(r)\mcD(r) g \left (t, y, r \right ) \right ] \biggr |_{r = v^{-1}\left (\frac{x-y}{t} \right )} dy\\
& \qquad + t^{-3}  \int v^\ell(r) \mcD(r) \partial_t g\left (t, y, r \right ) \biggr |_{r = v^{-1}\left (\frac{x-y}{t} \right )} dy\\
&= -3 t^{-1} j^\ell(t,x) - \frac{x}{t} \cdot \nabla_x j^\ell(t,x)\\
& \qquad + t^{-4}\int  \frac{y_i}{t}\mathbb{B}_{ik}\left (v(r) \right ) \partial_{r_k} \left [ v^\ell(r)\mcD(r) g \left (t, y, r \right ) \right ] \biggr |_{r = v^{-1}\left (\frac{x-y}{t} \right )} dy\\
& \qquad +  t^{-3}  \int v^\ell(r) \mcD(r) \partial_t g\left (t, y, r \right ) \biggr |_{r = v^{-1}\left (\frac{x-y}{t} \right )} dy
\end{align*}
due to \eqref{nabla_j}.
Multiplying by $t^4$, we expect the first term on the right side to converge to its previously established limit, while the second term will converge to a limit involving the gradient of $j_\infty$, and the last two terms are higher order.
Indeed, the difference we wish to estimate becomes
\begin{align*}
& \left | t^4  \partial_{t} j^\ell(t,x) + 3j_\infty^\ell\left (\frac{x}{t} \right ) + \frac{x}{t} \cdot \nabla_qj^\ell_\infty\left (\frac{x}{t} \right )  \right |\\
& \qquad \leq 3 \left | t^{3} j^\ell(t,x) - j_\infty^\ell\left (\frac{x}{t} \right ) \right |  + \left | \frac{x}{t} \cdot \left ( t^4 \nabla_xj^\ell(t,x) - \nabla_qj^\ell_\infty\left (\frac{x}{t} \right )  \right ) \right |  \\
& \qquad \qquad + \left |  \int \frac{y_i}{t}\mathbb{B}_{ik}\left (v(r) \right ) \partial_{r_k} \left [ v^\ell(r)\mcD(r) g \left (t, y, r \right )  \right ] \biggr |_{r = v^{-1}\left (\frac{x-y}{t} \right )} dy  \right |\\
& \qquad \qquad + t \left |\int v^\ell(r) \mcD(r) \partial_t g\left (t, y, r \right ) \biggr |_{r = v^{-1}\left (\frac{x-y}{t} \right )} dy  \right |\\
& \qquad =: I - IV.
\end{align*}

To estimate $I$, we use Lemma \ref{LCurrent} to find
\begin{eqnarray*}
I & \lesssim & \sup_{x \in \bfR^3} \left | t^3 j(t,x) - j_\infty \left ( \frac{x}{t} \right ) \right |\\
& \lesssim &  \| F(t) - F_\infty \|_\infty + t^{-1}\mcR(t)^4 \mcG_1(t)  \\
& \lesssim & t^{-1} \ln^6(t).
\end{eqnarray*}
Next, we use the first result of the current lemma for $II$, yielding
\begin{eqnarray*}
II & \lesssim & \gamma \sup_{x \in \bfR^3} \left | t^4 \nabla_xj^\ell(t,x) - \nabla_q j^\ell_\infty\left (\frac{x}{t} \right )  \right |\\
& \lesssim & \|F(t) - F_\infty \|_\infty + \| \nabla_pF(t) - \nabla_p F_\infty\|_\infty + t^{-1} \mcR(t)^4  \mcG_2(t)\\
& \lesssim & t^{-1} \ln^8(t).
\end{eqnarray*}
Within $III$, the derivatives appearing in the integrand are all dominated by $\mcG_1(t)$, and thus
$$III \lesssim t^{-1} \mcR(t)^4 \mcG_1(t) \lesssim t^{-1}\ln^6(t).$$

Finally, $IV$ is the most challenging term.
We estimate the integral and use \eqref{VMg} to write
$$\partial_{t}g(t,x,p) =  t \mathbb{A}(p)K(t,x+v(p)t, p)\cdot \nabla_{x}g(t,x,p) -K(t,x+v(p)t, p) \cdot\nabla_{p}g(t,x,p).$$
This expression is inserted into the integral but evaluated at the point $(t,y,r)$ where $r = v^{-1}\left (\frac{x-y}{t} \right )$ so that
\begin{eqnarray*}
\partial_{t}g(t,y,r) & = & t \mathbb{A}(r)K(t,y+v(r)t, r)\cdot \nabla_{x}g(t,y,r) -K(t,y+v(r)t, r) \cdot\nabla_{p}g(t,y,r)\\
& = & t \mathbb{A}(r)K(t,x, r)\cdot \nabla_{x}g(t,y,r) -K(t,x, r) \cdot\nabla_{p}g(t,y,r).
\end{eqnarray*}
We will integrate by parts in the $y$ variable, thus we compute
$$\partial_{y_i} \left [ g(t,y,r) \right ] = \partial_{x_i}g(t,y,r) - t^{-1} \mathbb{B}_{ik}(v(r)) \partial_{p_k} g(t,y,r)$$
so that
$$\partial_{x_i}g(t,y,r) = \partial_{y_i} \left [ g(t,y,r) \right ] + t^{-1} \mathbb{B}_{ik}(v(r))\partial_{p_k} g(t,y,r).$$
Replacing $\partial_t g(t,y,r)$, we see that $IV$ can be split into
\begin{align*}
IV & \leq t^2\left |\int v^\ell(r) \mcD(r)\mathbb{A}(r)K(t,x, r)\cdot \nabla_{x}g(t,y,r) \biggr |_{r = v^{-1}\left (\frac{x-y}{t} \right )} dy  \right |\\
& \qquad + t \left |\int v^\ell(r) \mcD(r) K(t,x, r) \cdot\nabla_{p}g(t,y,r) \biggr |_{r = v^{-1}\left (\frac{x-y}{t} \right )} dy  \right |\\
&   \leq t^2\left |\int v^\ell(r) \mcD(r) \mathbb{A}_{ij}(r) K^j(t,x, r)\partial_{y_i} \left [ g(t,y,r) \right ] \biggr |_{r = v^{-1}\left (\frac{x-y}{t} \right )} dy  \right |\\
& \qquad  + t\left |\int v^\ell(r) \mcD(r) \mathbb{A}_{ij}(r) K^j(t,x, r)\mathbb{B}_{ik}(v(r)) \partial_{p_k} g(t,y,r) \biggr |_{r = v^{-1}\left (\frac{x-y}{t} \right )} dy  \right |\\
& \qquad  + t \left |\int v^\ell(r) \mcD(r) K(t,x, r) \cdot\nabla_{p}g(t,y,r) \biggr |_{r = v^{-1}\left (\frac{x-y}{t} \right )} dy  \right |\\
& =: IV_A + IV_B + IV_C.
\end{align*}
Within $IV_A$, we integrate by parts in $y$, and as every $y_i$ derivative of $r = v^{-1}\left (\frac{x-y}{t} \right )$ provides an additional factor of $t^{-1}$, Lemma \ref{LPrelim} produces the estimate
\begin{align*}
IV_A & = t^2\left |\int  \partial_{y_i} \left [ v^\ell(r) \mcD(r)\mathbb{A}_{ij}(r)K^j(t,x, r)  \biggr |_{r = v^{-1}\left (\frac{x-y}{t} \right )}\right ] g\left (t,y,v^{-1}\left (\frac{x-y}{t} \right ) \right) dy  \right |\\
& \lesssim t \int_{|y| \lesssim \mcR(t)} \left | K(t,x, r) \right | g(t,y,r) \biggr |_{r = v^{-1}\left (\frac{x-y}{t} \right )} dy\\
& \lesssim t^{-1} \ln^3(t).
\end{align*}
The term $IV_B$ is more straightforward as the boundedness of the functions of $r$ within the integrand and Lemmas \ref{LPrelim} and \ref{Dvg} give
$$IV_B \lesssim t \mcG_1(t) \int_{|y| \lesssim \mcR(t)} \left | K \left (t,x, v^{-1}\left (\frac{x-y}{t} \right ) \right ) \right | dy \lesssim t^{-1} \ln^5(t).$$
Finally, the estimate for $IV_C$ is nearly identical to that of $IV_B$, and we similarly find
$$IV_C \lesssim t \mcG_1(t)\mcR(t)^3  \mcK_0(t) \lesssim t^{-1} \ln^5(t).$$
Therefore, we have 
$$IV \lesssim t^{-1} \ln^5(t),$$
and further collecting the estimates for $I$, $II$, and $III$ yields the second result.
\end{proof}

Finally, we present a pivotal lemma that will be used to estimate kernel terms appearing in second order field derivatives within the proof of the small data theorem.

\begin{lemma}
\label{Lm}
Let $m : \partial\Gamma_1 \times \Gamma_1 \to \mathbb{R}$ be bounded on the support of $f$ and continuously differentiable in its second argument with bounded derivatives. Define
$$ M(t,\omega, y) = \int m(\omega, v(p)) \partial_ {x_k} f(t,y,p) \ dp$$
for any of $k = 1, 2, 3$.
Then, for all $|\omega | = 1$ and $|y| \lesssim \gamma t$  we have
$$|M(t,\omega, y) | \lesssim t^{-4}.$$
\end{lemma}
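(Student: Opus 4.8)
\emph{Proof plan.} The obstruction is that $\partial_{x_k} f(t,y,p) = (\partial_{z_k} g)(t,y - v(p)t, p)$ carries no time decay --- Theorem~\ref{GS} only gives $\|\nabla_x g(t)\|_\infty \lesssim 1$ --- so the bound cannot come from estimating the integrand pointwise. The decay will instead be extracted from two structural facts: (i) for fixed $y$ the integrand is supported in a ball of radius $\lesssim \mcR(t)/t \lesssim t^{-1}\ln t$ in $p$ (since $g(t,\cdot,p)$ is supported in $|w| \le \mcR(t)$, Lemma~\ref{Xsupp}), so passing to the slow variable $w = y - v(p)t$ produces an overall factor $t^{-3}$; and (ii) after this change of variables the remaining coefficients depend on $w$ only through $(y-w)/t$, so one $w$-integration by parts buys an extra factor $t^{-1}$.

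Concretely, I would change variables $p \mapsto w := y - v(p)t$ (a global diffeomorphism, with $dw = t^{3}\mcD(p)^{-1}\,dp$ by Lemma~\ref{lem:v}) to write $M(t,\omega,y) = t^{-3}\int \Phi(w)\,(\partial_{w_k} g)(t,w,P(w))\,dw$, where $\Phi(w) := m\bigl(\omega,\tfrac{y-w}{t}\bigr)\,\mcD\bigl(v^{-1}(\tfrac{y-w}{t})\bigr)$ and, for brevity, $P(w) := v^{-1}(\tfrac{y-w}{t})$. On the support of the integrand one has $|w| \le \mcR(t) \lesssim \ln t$ and $(y-w)/t$ in a fixed compact subset of $\Gamma_1$ (this is where $|y| \lesssim \gamma t$ with $\gamma < 1$ enters), so $m$, $\mcD\circ v^{-1}$, $\mathbb{B}$ and all their first derivatives are bounded there. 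Using the chain rule $(\partial_{w_k} g)(t,w,P(w)) = \partial_{w_k}\bigl[g(t,w,P(w))\bigr] + t^{-1}\mathbb{B}_{jk}(\tfrac{y-w}{t})\,(\partial_{p_j} g)(t,w,P(w))$, I would split $M = M_1 + M_2$ into a perfect-derivative term and a momentum-derivative term, the latter already bearing an explicit factor $t^{-4}$.

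For $M_1$ I integrate by parts in $w_k$ (no boundary terms, by compact support of $g$); each $w$-derivative of $\Phi$ carries a factor $t^{-1}$, so $|M_1| \lesssim t^{-4}\int_{|w| \lesssim \ln t} g(t,w,P(w))\,dw$, and changing back to the $p$ variable this integral equals $t^{3}\int f(t,y,p)\,\mcD(p)^{-1}\,dp \le t^{3}\,\rho(t,y) \lesssim 1$ by Theorem~\ref{GS} (using $0 \le \mcD(p)^{-1} = p_0^{-5} \le 1$ and $f \ge 0$). For $M_2$ it suffices to show $\bigl|\int_{|w|\lesssim\ln t}\Phi(w)\,\mathbb{B}_{jk}(\tfrac{y-w}{t})\,(\partial_{p_j} g)(t,w,P(w))\,dw\bigr| \lesssim 1$. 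I would establish this by freezing the momentum argument at $p_0 := v^{-1}(y/t)$ and the coefficient $\Phi\,\mathbb{B}_{jk}$ at $w = 0$: the frozen integral equals a bounded constant times $\int_{\bfR^3}(\partial_{p_j} g)(t,w,p_0)\,dw = \partial_{p_j} F(t,p_0)$, which is $\lesssim 1$ by Lemma~\ref{DFunif}; the error from unfreezing the momentum argument is $\lesssim t^{-1}\mcR(t)^4\|\nabla_p^2 g(t)\|_\infty$ (by Lemma~\ref{LH} applied with $\mcH = \partial_{p_j} g$), and the error from unfreezing the slowly varying coefficient $\Phi\,\mathbb{B}_{jk}$ is $\lesssim t^{-1}\mcR(t)^4\|\nabla_p g(t)\|_\infty$, both $\lesssim t^{-1}\,\mathrm{polylog}(t)$ by Lemmas~\ref{Dvg} and~\ref{D2g}. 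Adding $M_1$ and $M_2$ gives $|M(t,\omega,y)| \lesssim t^{-4}$.

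The main obstacle is the estimate of $M_2$: the crude bound $\|\nabla_p g(t)\|_\infty \lesssim \ln^2 t$ times the volume $\mcR(t)^3 \lesssim \ln^3 t$ of the $w$-support yields only $t^{-4}\ln^5 t$, so one genuinely has to exploit that, once the $w$-dependence of the momentum argument is frozen, the $w$-integral of $\partial_{p_j} g$ collapses to the bounded quantity $\nabla_p F$; the second-order estimate of Lemma~\ref{D2g} is exactly what is needed to control the unfreezing error and remove the logarithms.
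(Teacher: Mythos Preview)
Your proposal is correct and follows essentially the same route as the paper: change variables $p \mapsto w = y - v(p)t$ to extract $t^{-3}$, use the chain-rule identity $(\partial_{x_k}g)(t,w,P(w)) = \partial_{w_k}[g(t,w,P(w))] + t^{-1}\mathbb{B}_{jk}\,\partial_{p_j}g$ to split, integrate by parts in the total-derivative piece to gain another $t^{-1}$, and control the $\partial_p g$ piece by freezing the slow argument and invoking the boundedness of $\nabla_p F$ together with Lemma~\ref{LH} and the second-derivative bound of Lemma~\ref{D2g}. The only difference is organizational: the paper recombines the two pieces into the single expression $M = t^{-4}\int \mathbb{B}_{k\ell}(v(r))\,\partial_{r_\ell}[g\,m\,\mcD]\,dz$ and compares $t^4 M$ to a limit $M_\infty(\omega,y/t)$ built from $F_\infty$, whereas you estimate $M_1$ directly via $\rho(t,y)\lesssim t^{-3}$ and handle $M_2$ separately --- both arrive at the same $t^{-4}$ bound using the same ingredients.
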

\begin{remark}
\label{Lm_rmk}
For the proof of the small data theorem, we will apply this result to the iterates.
Hence, if in addition to the stated assumptions we have $\Vert f_0 \Vert_{C^1} \leq \epsilon_0$, then the estimate becomes
$$|M(t,\omega, y) | \leq C\epsilon_0 (1+t)^{-4} \leq C\epsilon_0 (t + |y| + 2L)^{-4}$$
for all $t \geq 0$, $|\omega | = 1$, and $y \in \bfR^3$ due to the bound on the support of $f$ and $\partial_{x_k} f$.
\end{remark}
\begin{proof}
Again, we use the Einstein summation convention where necessary.
We first rewrite the integral as
$$ M(t,\omega, y) = \int m(\omega, v(p)) \partial_ {x_k} g(t,y - v(p)t,p) \ dp$$
and conduct a change of variables
$$z =  y - v(p)t$$
with
$$dz =  t^3 \left | \det(\mathbb{A}(p)) \right | dp.$$
Recall that 
$\mcD(p) = \left | \det(\mathbb{A}(p)) \right |^{-1} = \left (1+ |p|^2 \right )^{5/2}$ due to \eqref{Ddef}.
Hence, we have
$$ M(t,\omega, y) = t^{-3} \int m \left (\omega, v(r) \right ) \partial_ {x_k} g \left (t,z, r \right ) \mcD \left (r \right ) \biggr |_{r =  v^{-1} \left (\frac{y-z}{t} \right )}  \ dz.$$

Next, we note that
$$ \partial_ {x_k} g \left (t,z, v^{-1} \left (\frac{y-z}{t} \right ) \right ) =  \frac{d}{dz_k} \left [ g \left (t,z, v^{-1} \left (\frac{y-z}{t} \right ) \right ) \right ] + t^{-1} \mathbb{B}_{k\ell}\left (\frac{y-z}{t} \right ) \partial_{p_\ell}  g \left (t,z, v^{-1} \left (\frac{y-z}{t} \right ) \right ).$$
Hence, replacing the $x_k$-derivative of $g$ within the above integral splits it into two terms, and we find
\begin{eqnarray*}
M(t,\omega, y) & = & t^{-3}\int  \frac{d}{dz_k} \left [ g \left (t,z, v^{-1} \left (\frac{y-z}{t} \right ) \right ) \right ] m \left (\omega, \frac{y-z}{t}  \right ) \mcD \left (v^{-1} \left ( \frac{y-z}{t}  \right ) \right )  \ dz\\
& & +  \ t^{-4} \int  \mathbb{B}_{k\ell}\left (v(r) \right ) \partial_{p_\ell}  g \left (t,z, r \right ) m \left (\omega, v(r) \right ) \mcD \left (r  \right ) \biggr |_{r =  v^{-1} \left (\frac{y-z}{t} \right )} \ dz.
\end{eqnarray*}
Integrating by parts in the first term yields
\begin{eqnarray*}
& & \int  \frac{d}{dz_k} \left [ g \left (t,z, v^{-1} \left (\frac{y-z}{t} \right ) \right ) \right ] m \left (\omega, \frac{y-z}{t}  \right ) \mcD \left (v^{-1} \left ( \frac{y-z}{t}  \right ) \right )  \ dz\\
& & = t^{-1} \int g \left (t,z, r \right ) \left [ \partial_{v_k} m \left (\omega, v(r)  \right ) + \mathbb{B}_{k\ell}  \left ( v(r)  \right ) \partial_{p_\ell}  \mcD \left (r \right )  \right ]  \biggr |_{r =  v^{-1} \left (\frac{y-z}{t} \right )}   \ dz.
\end{eqnarray*}
Noting that
$$\partial_{p_\ell} \left [ m(\omega, v(p)) \right ] = \partial_{v_k} m\left (\omega, v(p)  \right ) \mathbb{A}_{k\ell}(p),$$
using the symmetry of $\mathbb{A}$ and $\mathbb{B}$ along with the identity \eqref{AB}, 
and assembling this with the remaining term in the representation for $M(t,\omega, y)$ gives
$$M(t,\omega, y) =  t^{-4}\int \mathbb{B}_{k \ell}(v(r)) \partial_{r_\ell} \left [ g m \mcD \right ] \left (t,\omega,z,r \right )  \biggr |_{r =  v^{-1} \left (\frac{y-z}{t} \right )}  \ dz.$$
Therefore, we define
$$M_\infty(\omega, p) = \mathbb{B}_{k \ell}(v(p)) \partial_{p_\ell} \left [ F_\infty(p) m \left (\omega, v(p) \right ) \mcD \left (p \right ) \right ]$$
and note that this quantity is uniformly bounded due to the compact support of $F_\infty$.
Then, we denote
$$\mcH(t,\omega, z, p) = \mathbb{B}_{k \ell}(v(p)) \partial_{p_\ell} \left [ g \left (t,z,p \right ) m \left (\omega, v(p) \right ) \mcD \left (p \right ) \right ]$$
and estimate the difference
$$\left | t^4 M(t, \omega, y) - M_\infty \left (\omega, \frac{y}{t} \right ) \right |
=  
\biggl | \int \mcH \left (t, \omega, z,  v^{-1} \left (\frac{y-z}{t} \right ) \right )  \ dz
- \left (\mathbb{B}_{k \ell} \partial_{p_\ell} \left [  m \mcD F_\infty \right ] \right ) \left (\omega, \frac{y}{t} \right ) \biggr |.$$
This can be rewritten as 
$$\left | t^4 M(t, \omega, y) - M_\infty \left (\omega, \frac{y}{t} \right ) \right | \leq I + II$$
where
$$I = \int \left | \mcH \left (t, \omega, z,  v^{-1} \left (\frac{y-z}{t} \right ) \right ) - \mcH \left (t, \omega, z,  v^{-1} \left (\frac{y}{t} \right ) \right )\right | \ dz.$$
and
$$II = \left | \left (\mathbb{B}_{k \ell} \partial_{p_\ell} \left [ m \mcD \left ( F - F_\infty \right ) \right ] \right ) \left (t, \omega, v^{-1} \left (\frac{y}{t} \right ) \right )  \right | $$
as
$$\int \mcH \left (t, \omega, z,  p \right ) \ dz = \mathbb{B}_{k \ell}(v(p)) \partial_{p_\ell} \left [ F \left (t,p \right ) m \left (\omega, v(p) \right ) \mcD \left (p \right ) \right ].$$

We estimate $I$ merely by fixing $\omega \in S^1$ and invoking Lemma \ref{LH} to find
$$I \lesssim  t^{-1} \mcR(t)^4 \Vert \nabla_p \mcH(t) \Vert_\infty.$$
As $m(\omega, v(p))$, $\mcD(p)$, their $p$-derivatives, and $\Vert g(t) \Vert_\infty$ are all uniformly bounded, we have
$\Vert \nabla_p \mcH(t) \Vert_\infty \lesssim \mcG_2(t),$ 
and combining this with the estimates of Lemmas \ref{Xsupp} and \ref{Dvg} gives
$$I \lesssim  t^{-1} \mcR(t)^4\mcG_2(t)  \lesssim t^{-1} \ln^7(t).$$
The estimate of $II$ uses Lemmas \ref{Funif} and \ref{DFunif} and the compact support of $F(t)$ and $F_\infty$ to find
$$II \lesssim \| F(t) - F_\infty \|_\infty +  \| \nabla_p F(t) - \nabla_p  F_\infty \|_\infty \lesssim t^{-1} \ln^8(t).$$

Combining these estimates then yields 
$$\left | t^4 M(t, \omega, y) - M_\infty \left (\omega, \frac{y}{t} \right ) \right | \lesssim t^{-1} \ln^8(t).$$
Finally, using this inequality and the boundedness of $M_\infty(\omega, p)$ gives
$$\left |M(t, \omega, y) \right | \lesssim  t^{-4} \left |M_\infty \left (\omega, \frac{y}{t} \right ) \right | + \left | M(t, \omega, y) - t^{-4} M_\infty \left ( \omega, \frac{y}{t} \right ) \right | \lesssim  t^{-4} \left |M_\infty \left (\omega, \frac{y}{t} \right ) \right | + t^{-5} \ln^8(t) \lesssim t^{-4},$$
and yields the stated result.
\end{proof}

\section{Field Limits and Modified Scattering}
\label{sect:fieldconv}


\subsection{Convergence of the Electric and Magnetic Fields}

With the large time asymptotic behavior established for the charge density, current density, and their derivatives, we can further generate the induced electric and magnetic fields as $t \to \infty$ via self-similar solutions of inhomogeneous wave equations. In order to accomplish this, we will prove a lemma relating the asymptotic behavior of solutions to the inhomogeneous wave equation in terms of the associated forcing function.

First, we state a change of variables lemma from \cite{Glassey} that will allow us to estimate complicated integrals, both in the next lemma and within the next section, in which Theorem \ref{T0} is proved.
\begin{lemma}\cite[Lemma 6.5.2 (p. 176)]{Glassey}
\label{LGS_CV}
For any continuous function $\Phi(\tau, \lambda)$ of two real variables, and $\Psi(\sigma)$ of one real variable, we have
$$\int_{|y-z| \leq s} \Phi(s - |y-z|, |z|) \Psi(|y-z|) \ dz = \frac{2\pi}{|y|} \int_0^s \int_{|s - |y| - \tau|}^{s+ |y| - \tau} \Phi(\tau, \lambda) \lambda d\lambda (s-\tau) \Psi(s-\tau) d\tau,$$
where the integration on the left is over a ball in $\mathbb{R}^3$.
\end{lemma}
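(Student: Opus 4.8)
The plan is to reduce the three‑dimensional integral on the left to an iterated integral by a single classical change of variables, exploiting the fact that the integrand depends on $z$ only through the two scalars $|z|$ and $|y-z|$. Assume $y\neq 0$ (the identity is vacuous otherwise, since $|y|^{-1}$ appears on the right). First I would pass to spherical coordinates $(r,\theta,\varphi)$ for $z$ with polar axis along $y$, so that $|z|=r$, the azimuthal angle $\varphi$ does not enter the integrand, and $|y-z|^2=|y|^2+r^2-2|y|r\cos\theta$ with $\theta\in[0,\pi]$ the angle between $z$ and $y$. Integrating out $\varphi$ contributes the factor $2\pi$ and leaves an integral of the form $\int_0^\infty\!\int_0^\pi(\cdots)\,r^2\sin\theta\,d\theta\,dr$ over the region where $|y-z|\le s$.

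Next, for each fixed $r>0$ I would replace the angular variable $\theta$ by $\lambda':=|y-z|=\sqrt{|y|^2+r^2-2|y|r\cos\theta}$. This map is strictly increasing on $(0,\pi)$, carries it onto the interval $\bigl(\,|\,|y|-r\,|,\ |y|+r\,\bigr)$, and satisfies $\lambda'\,d\lambda'=|y|r\sin\theta\,d\theta$, so that $r^2\sin\theta\,d\theta\,dr=\frac{r\lambda'}{|y|}\,d\lambda'\,dr$. Together with the constraint $\lambda'\le s$ coming from the ball, the left‑hand side becomes
$$\frac{2\pi}{|y|}\iint_{\substack{0\le\lambda'\le s\\ |\,|y|-r\,|\le\lambda'\le|y|+r}}\Phi(s-\lambda',r)\,\Psi(\lambda')\,r\,\lambda'\,d\lambda'\,dr .$$
I would then exchange the order of integration, putting $\lambda'$ outermost: for fixed $\lambda'\in[0,s]$ the inequalities $|\,|y|-r\,|\le\lambda'\le|y|+r$ are equivalent to $|\,\lambda'-|y|\,|\le r\le\lambda'+|y|$. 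Finally I would substitute $\tau:=s-\lambda'$ (so $d\tau=-d\lambda'$ reverses orientation, turning $\int_{\lambda'=0}^{s}$ into $\int_{\tau=0}^{s}$) and relabel $r=\lambda$; since $s-\lambda'=\tau$, $\lambda'=s-\tau$, $|\,\lambda'-|y|\,|=|s-|y|-\tau|$ and $\lambda'+|y|=s+|y|-\tau$, the inner limits become exactly $[\,|s-|y|-\tau|,\ s+|y|-\tau\,]$, while $\Psi(\lambda')\lambda'$ becomes $(s-\tau)\Psi(s-\tau)$ and $\Phi(s-\lambda',r)$ becomes $\Phi(\tau,\lambda)$, which is precisely the claimed right‑hand side.

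The only genuinely delicate point is the bookkeeping of the domain: one must check that the image of $\{(r,\theta): r\ge0,\ 0\le\theta\le\pi,\ |y-z|\le s\}$ under $(r,\theta)\mapsto(r,\lambda')$ is exactly $\{0\le\lambda'\le s,\ |\,|y|-r\,|\le\lambda'\le|y|+r\}$, including the harmless degeneracies at $\theta\in\{0,\pi\}$ and $r=0$ (a null set) where the spherical substitution is singular. The Fubini exchange itself is immediate because $\Phi$ and $\Psi$ are continuous and, after intersecting with $\{|y-z|\le s\}$, the domain is compact. Everything else is the routine Jacobian computation sketched above; alternatively, one may simply cite \cite[Lemma 6.5.2]{Glassey} verbatim, as this is a standard change‑of‑variables identity for integrals whose integrand factors through $(|z|,|y-z|)$.
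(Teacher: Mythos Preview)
Your proof is correct and is exactly the standard argument: pass to spherical coordinates about the axis $y$, replace the polar angle by $\lambda'=|y-z|$ via the Jacobian identity $\lambda'\,d\lambda'=|y|\,r\sin\theta\,d\theta$, swap the order of integration, and finally substitute $\tau=s-\lambda'$. The paper itself does not supply a proof of this lemma; it is simply quoted from \cite[Lemma~6.5.2]{Glassey}, and your sketch reproduces precisely the computation one finds there.
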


With this, we can establish the asymptotic behavior of solutions to the inhomogeneous wave equation.

\begin{lemma}
\label{LWave}
Let $\eta \in C^1\left( [0,\infty) \times \mathbb{R}^3 \right )$ be supported on $\{ (t,x) : |x| \leq \zeta t + L\}$ 
where $\zeta$ is defined within Lemma \ref{LPrelim}.
Assume that for $|x| \lesssim \gamma t$, the function $\eta$ satisfies the estimate
$$ \left | t^4 \eta(t,x) - \eta_\infty \left(\frac{x}{t}\right) \right | \lesssim t^{-1}\ln^8(t)$$
where $\eta_\infty \in C^1(\Gamma_1)$ has compact support.
Let $\psi(t,x)$ be the unique solution of
\begin{equation}
\label{wavepsi}
\Box_{t,x} \psi = \eta(t,x)
\end{equation}
with initial conditions
$$ \psi(0,x) = \psi_0(x)$$
$$ \partial_t\psi(0,x) = \psi_1(x),$$
both of which are supported in $\{ |x| \leq L\}$.
Then, there exists $\psi_\infty \in C^{2,\delta}(\Gamma_1)$ for any $\delta \in (0,1)$ such that
$$ \left | t^2\psi(t,x) - \psi_\infty \left ( \frac{x}{t} \right ) \right | \lesssim t^{-1}\ln^8(t)$$
for all $|x| \lesssim \gamma t$.
Additionally, if $\eta_\infty \equiv 0$, then $\psi_\infty \equiv 0$.
\end{lemma}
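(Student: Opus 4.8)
The plan is to represent $\psi$ through Duhamel's formula, to discard the contribution of the compactly supported initial data by means of the strong Huygens principle, and then to analyse the retarded potential produced by $\eta$ after rescaling. I would begin by writing $\psi=\psi_{\mathrm{free}}+\psi_{\mathrm{inh}}$, where $\psi_{\mathrm{free}}$ solves the homogeneous wave equation with data $(\psi_0,\psi_1)$ and
\begin{equation*}
\psi_{\mathrm{inh}}(t,x)=\frac{1}{4\pi}\int_{|x-z|\leq t}\frac{\eta(t-|x-z|,z)}{|x-z|}\,dz .
\end{equation*}
Since $(\psi_0,\psi_1)$ are supported in $\overline{\Gamma}_L$, the strong Huygens principle (Kirchhoff's formula in $\mathbb{R}^3$) gives $\psi_{\mathrm{free}}(t,x)=0$ whenever $|x|<t-L$, so $\psi_{\mathrm{free}}\equiv0$ on $\{|x|\lesssim\gamma t\}$ once $t$ is large and may be ignored. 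The key geometric observation, which I would establish next, is that on the support of the integrand of $\psi_{\mathrm{inh}}(t,x)$ the retarded time $s:=t-|x-z|$ is comparable to $t$: if $\eta(s,z)\neq0$ then $|z|\leq\zeta s+L$, and $t-s=|x-z|\leq|x|+|z|\leq|x|+\zeta s+L$ forces $s\geq\frac{t-|x|-L}{1+\zeta}\gtrsim t$ as soon as $|x|\lesssim\gamma t$; since also $s\leq t$, one has $s\sim t$ and, for $t$ large, $|z|\leq\zeta s+L\leq\gamma s$. Hence the hypothesised expansion $|s^{4}\eta(s,z)-\eta_\infty(z/s)|\lesssim s^{-1}\ln^{8}s$ applies uniformly throughout the integral.

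Next I would define the candidate limit by rescaling. Substituting $z=ty$ (with $q:=x/t$) in the retarded potential of the model source $(s,z)\mapsto s^{-4}\eta_\infty(z/s)$ yields the \emph{exact} identity
\begin{equation*}
\frac{1}{4\pi}\int_{\mathbb{R}^{3}}\frac{(t-|x-z|)^{-4}\,\eta_\infty\!\big(\tfrac{z}{t-|x-z|}\big)}{|x-z|}\,dz=t^{-2}\,\psi_\infty\!\big(\tfrac{x}{t}\big),\qquad \psi_\infty(q):=\frac{1}{4\pi}\int_{\mathbb{R}^{3}}\frac{(1-|q-y|)^{-4}\,\eta_\infty\!\big(\tfrac{y}{1-|q-y|}\big)}{|q-y|}\,dy .
\end{equation*}
(The hypothesis in fact forces $\mathrm{supp}\,\eta_\infty\subset\overline{\Gamma}_\zeta$: for $\zeta<|q_0|<\gamma$ one has $\eta(s,sq_0)=0$ for $s$ large, so $|\eta_\infty(q_0)|\lesssim s^{-1}\ln^{8}s\to0$.) One checks that the $y$-integral converges for every $|q|<1$, since $|q-y|^{-1}$ is integrable in $\mathbb{R}^{3}$ and on the support $|q-y|$ is bounded away from $1$ by $\tfrac{1-|q|}{1+\zeta}$; in particular $\psi_\infty$ is well defined on $\Gamma_1$ and, reading off the formula, $\psi_\infty\equiv0$ whenever $\eta_\infty\equiv0$, which already disposes of the last assertion. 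For the regularity I would use the self-similar structure: a direct computation shows $\Box\big(t^{-2}W(x/t)\big)=t^{-4}(\mathcal{L}W)(x/t)$ with $\mathcal{L}$ the operator of Theorem \ref{T1}(d); applying this with $W=\psi_\infty$ and comparing with $\Box\big(t^{-2}\psi_\infty(x/t)\big)=t^{-4}\eta_\infty(x/t)$ (valid on $\{|x|<t\}$, since there the retarded integral of the model source samples only retarded times bounded away from $0$, where that source is smooth) gives $\mathcal{L}\psi_\infty=\eta_\infty$ on $\Gamma_1$. As $\mathcal{L}$ is uniformly elliptic on every compact subset of $\Gamma_1$ with smooth coefficients (its principal symbol equals $|\xi|^{2}-(q\cdot\xi)^{2}\geq(1-|q|^{2})|\xi|^{2}$) and $\eta_\infty\in C^{1}\subset C^{0,\delta}$, interior Schauder estimates (cf.\ \cite{GT}) yield $\psi_\infty\in C^{2,\delta}$ on compact subsets of $\Gamma_1$, for every $\delta\in(0,1)$.

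Finally, to obtain the rate, I would subtract the two exact identities above and estimate
\begin{equation*}
\Big|t^{2}\psi(t,x)-\psi_\infty\!\big(\tfrac{x}{t}\big)\Big|\lesssim t^{2}\int_{|x-z|\leq t}\frac{\big|\eta(s,z)-s^{-4}\eta_\infty(z/s)\big|_{s=t-|x-z|}}{|x-z|}\,dz ,
\end{equation*}
where, by the first paragraph, on the support of the integrand $|\eta(s,z)-s^{-4}\eta_\infty(z/s)|\lesssim s^{-5}\ln^{8}s\lesssim t^{-5}\ln^{8}t$ and $|z|\leq\zeta s+L$. Bounding the integrand by $C\,\Phi(t-|x-z|,|z|)/|x-z|$ with $\Phi(\tau,\lambda)=\tau^{-5}\ln^{8}\tau\,\chfn_{\{\lambda\leq\zeta\tau+L\}}$ and applying the change-of-variables Lemma \ref{LGS_CV} (with $\Psi(\sigma)=\sigma^{-1}$, so $(t-\tau)\Psi(t-\tau)\equiv1$) reduces the integral to $\frac{2\pi}{|x|}\int_{0}^{t}\big(\int_{|t-|x|-\tau|}^{t+|x|-\tau}\Phi(\tau,\lambda)\lambda\,d\lambda\big)\,d\tau$; the indicator in $\Phi$ localizes $\tau$ to $\big[\tfrac{t-|x|-L}{1+\zeta},\tfrac{t-|x|+L}{1-\zeta}\big]\cap[0,t]$, i.e.\ $\tau\sim t$, while the inner $\lambda$-integral is $\lesssim|x|\,t$, which cancels the $|x|^{-1}$ prefactor. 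Hence the whole expression is $\lesssim t\int_{\tau\sim t}\tau^{-5}\ln^{8}\tau\,d\tau\lesssim t^{-3}\ln^{8}t$, and multiplying by $t^{2}$ gives $|t^{2}\psi(t,x)-\psi_\infty(x/t)|\lesssim t^{-1}\ln^{8}t$, uniformly for $|x|\lesssim\gamma t$, as claimed. The step I expect to be most delicate is precisely this interplay of supports — showing that the retarded times relevant at an interior point stay comparable to $t$ (so that the \emph{a priori} expansion, valid only for $|x|\lesssim\gamma t$, may be invoked and so that $s^{-5}\ln^{8}s\sim t^{-5}\ln^{8}t$), and that, after Lemma \ref{LGS_CV}, the same restriction localizes the $\tau$-integral, which is what upgrades the pointwise rate $t^{-5}\ln^{8}t$ to the integrated rate $t^{-3}\ln^{8}t$; the regularity of $\psi_\infty$ through the degenerate elliptic operator $\mathcal{L}$ is the other ingredient requiring care.
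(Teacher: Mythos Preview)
Your proof is correct and takes a genuinely different route from the paper's. The paper first \emph{defines} $\psi_\infty$ as the solution of the Dirichlet problem $\mathcal{L}\psi_\infty=\eta_\infty$ on $\Gamma_\gamma$ with zero boundary data (extended by $0$ outside), then shifts the initial time to some $T>0$ and decomposes $\psi=\psi_H+\psi_I+\psi_{II}$, where $\psi_I$ carries the self-similar profile, $\psi_H$ vanishes by Huygens, and $\psi_{II}$ absorbs the source error $\eta-t^{-4}\eta_\infty(x/t)$; Lemma~\ref{LGS_CV} is then used to estimate $\psi_{II}$. You instead work directly from $t=0$: after discarding $\psi_{\mathrm{free}}$ by Huygens, you exploit the exact scaling of the retarded potential of the model source $s^{-4}\eta_\infty(z/s)$ to produce an \emph{explicit} formula for $\psi_\infty$ on all of $\Gamma_1$, and then subtract and estimate in one step. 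The support observation you isolate --- that on the relevant integrand the retarded time $s=t-|x-z|$ satisfies $s\sim t$, which both legitimises the use of the hypothesis at $(s,z)$ and localises the $\tau$-integration after Lemma~\ref{LGS_CV} --- is exactly the mechanism driving the paper's $\psi_{II}$ estimate as well, but you make it explicit up front rather than discovering it inside the double integral.

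What each approach buys: the paper's construction foregrounds the elliptic boundary-value problem, which plugs directly into the statements of Theorem~\ref{T1}(d). Your construction is more self-contained and yields an explicit integral representation of $\psi_\infty$; you recover $\mathcal{L}\psi_\infty=\eta_\infty$ a posteriori (only to invoke interior Schauder for the $C^{2,\delta}$ regularity). Your argument also sidesteps a delicate point in the paper's decomposition --- namely, the identification $\psi_I(t,x)=t^{-2}\psi_\infty(x/t)$ relies on the $0$-extension of $\psi_\infty$ across $|q|=\gamma$ being a global solution of the wave equation, which requires matching of normal derivatives there; by never extending and never invoking uniqueness across that surface, your Duhamel comparison avoids this issue entirely.
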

\begin{proof}
We begin by constructing the limiting solution of the wave equation, $\psi_\infty$.
In particular, note that by assumption, the limiting profile of $\eta$ (denoted $\eta_\infty$) possesses a self-similar structure.
Therefore, we will find a solution of the wave equation with this same property.
In this direction, we fix $T > 0$ such that for $t \geq T$, we have
$\zeta t + L \leq \gamma t$ and thus
$\eta(t,x)$ is supported on $\{ (t,x) : |x| \leq \gamma t\}$ for $t \geq T$ (see Figure \ref{fig:lightcone}).
	\begin{figure}
	\includegraphics[width=10cm,height=5cm]{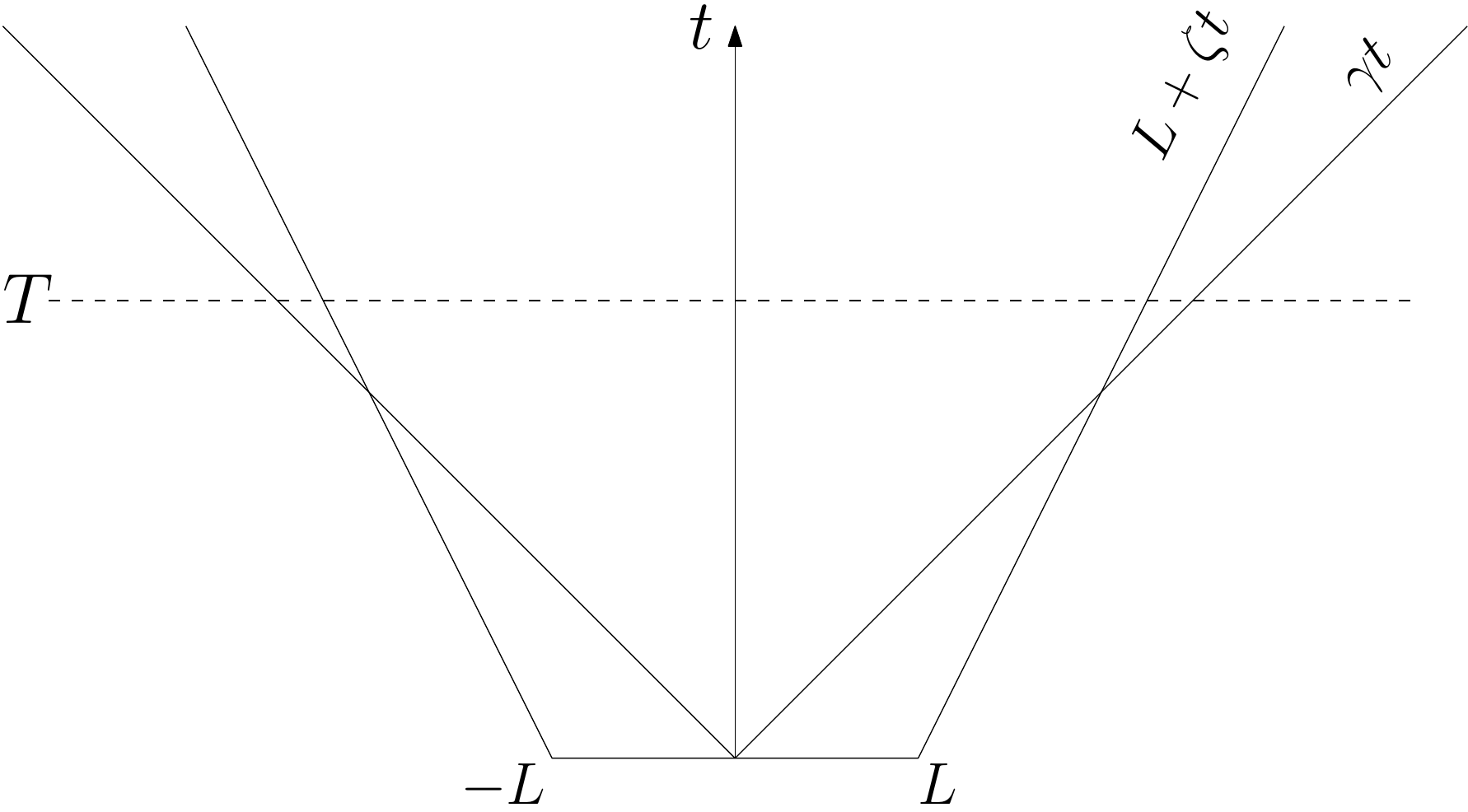}
	\caption{Comparison of the sets enclosed by $L+\zeta t$ and $\gamma t$ where $0<\zeta<\gamma<1$.}\label{fig:lightcone}
	\end{figure}
Define the linear operator 
$$\mathcal{L} u := \sum_{i,j=1}^3 \left (y_i y_j - \delta_{ij} \right ) \partial_{y_i y_j} u + 6y \cdot \nabla_y u + 6u$$
and note that for $|y| < \gamma < 1$
$$ - \sum_{i,j=1}^3 \left ( y_i y_j - \delta_{ij} \right )\xi_i \xi_j = |\xi|^2 - |y \cdot \xi|^2 > (1 - \gamma^2) |\xi|^2 = C |\xi|^2$$
where $C > 0$.
Thus, the operator is uniformly elliptic for sufficiently smooth functions defined on $\{ y \in \mathbb{R}^3 : |y| < \gamma \}$.
Further, as $\eta_\infty \in C^1$, we define $\psi_\infty \in C^{2, \delta}$ for any $\delta \in (0,1)$ to be the unique solution of the uniformly elliptic boundary-value problem \cite{GT}
$$ \mathcal{L} \psi_\infty(y) = \eta_\infty(y)$$
for $|y| < \gamma$ with the boundary condition
$$\psi_\infty(y) = 0 \quad \mathrm{for} \ |y| = \gamma.$$
Due to the boundary condition, \alt{we merely} extend $\psi_\infty(y) = 0$ for $|y| > \gamma$ to preserve continuity \alt{and define the function for all $y \in \bfR^3$. Further,} 
$\psi_\infty$ will remain $C^{2, \delta}$ on the interior of the sphere of radius $|y| = \gamma$.
Note that $\eta_\infty \equiv 0$ implies $\psi_\infty \equiv 0$ due to uniqueness.
Then, a brief calculation shows
\begin{equation}
\label{psi_wave}
\Box_{t,x} \left [ t^{-2} \psi_\infty \left ( \frac{x}{t} \right )  \right ] = t^{-4} \left (\mathcal{L} \psi_\infty \right )\left ( \frac{x}{t} \right ) = t^{-4} \eta_\infty\left ( \frac{x}{t}  \right )
\end{equation}
for $|x| < \gamma t$.
%

%
With this, we let $\psi(t,x)$ be the unique classical solution of \eqref{wavepsi} with the given initial conditions $\psi_0$ and $\psi_1$ at $t=0$. As we are interested in the values of this solution within the region $|x| \leq \gamma t$ for $t \geq T$, we recast this as a forward-shifted initial-value problem for $t \geq T$ with ``initial'' data at $t = T$ and estimate within this region of space-time.
Using superposition, we may decompose the solution as
$$\psi(t,x) = \psi_H(t,x)  + \psi_I(t,x)  + \psi_{II}(t,x)$$
for $|x| < \gamma t$
where (i) $\psi_H$ is a homogeneous solution satisfying
\begin{eqnarray*}
& \Box_{t,x} \psi_H = 0 & \mathrm{for} \  t > T,  \alt{x \in \bfR^3}\\
& \psi_H(T,x) = \psi(T,x) - T^{-2} \psi_\infty \left (\frac{x}{T} \right ) =: \widetilde{\psi}_0(x) & \mathrm{for} \ \alt{x \in \bfR^3}\\
& \partial_t \psi_H(T,x) = \partial_t\psi(T,x) - \partial_t \left (t^{-2} \psi_\infty \left (\frac{x}{t} \right ) \right ) \biggr |_{t = T} =: \widetilde{\psi}_1(x) \quad &  \mathrm{for} \  \alt{x \in \bfR^3},
\end{eqnarray*}
(ii) the function $\psi_I$ satisfies
\begin{eqnarray*}
& \Box_{t,x} \psi_I = t^{-4} \eta_\infty \left (\frac{x}{t} \right ) & \mathrm{for} \  t > T,  \alt{x \in \bfR^3}\\
& \psi_I(T,x) = T^{-2} \psi_\infty \left (\frac{x}{T} \right ) & \mathrm{for} \  \alt{x \in \bfR^3}\\
& \partial_t \psi_I(T,x)  = \partial_t \left (t^{-2} \psi_\infty \left ( \frac{x}{t} \right ) \right ) \biggr |_{t = T} \quad & \mathrm{for} \  \alt{x \in \bfR^3},
\end{eqnarray*}
and (iii) $\psi_{II}$ satisfies
\begin{eqnarray*}
& \Box_{t,x} \psi_{II} =  \eta(t,x) - t^{-4} \eta_\infty \left ( \frac{x}{t} \right ) \quad & \mathrm{for} \  t > T,  \alt{x \in \bfR^3}\\
& \psi_{II}(T,x) = 0 & \mathrm{for} \  \alt{x \in \bfR^3}\\
&\partial_t \psi_{II}(T,x) = 0 & \mathrm{for} \  \alt{x \in \bfR^3}.
\end{eqnarray*}

\alt{With these functions well-defined, we consider only $t \geq T$ and $|x| \leq \gamma t$ and estimate $\psi_H$, $\psi_I$, and $\psi_{II}$ separately.
We will further take $t$ larger, as needed below, and in this case use the notation  $|x| \lesssim \gamma t$.}
First, we turn our attention to $\psi_I$. Note that by \eqref{psi_wave} and uniqueness, we must have
\begin{equation}
\label{psi_I}
\psi_I(t,x) = t^{-2} \psi_\infty \left (\frac{x}{t} \right )
\end{equation}
for all $t \geq T$ and $|x| \leq \gamma t$, as these functions satisfy the same inhomogeneous wave equation with identical ``initial'' conditions at $t = T$.
As we will show, the leading contribution to $\psi(t, x)$ will be  the $\psi_I(t, x)$ term, which is merely $t^{-2}\psi_\infty\left ( \frac{x}{t} \right )$ due to \eqref{psi_I},

Next, we show that the homogeneous term $\psi_H$ vanishes for sufficiently large $t$.
In particular, using the Kirchoff formula \cite{Evans} we express the homogeneous solution $\psi_H(t,x)$ for $t \geq T$ and $|x| \leq \gamma t$ exactly as
\begin{equation}
\label{psiH_rep}
\psi_H(t,x) = \frac{1}{4\pi (t- T)^2} \left [ \int_{|x-z| = t-T} \left ( \widetilde{\psi}_0(z) + \nabla \widetilde{\psi}_0(z) \cdot (z-x) \right ) dS_z + (t-T) \int_{|x-z| = t-T} \widetilde{\psi}_1(z) dS_z \right ].
\end{equation}
As $\psi_0(x)$ and $\psi_1(x)$ are supported on the set $\{|x| \leq L\}$, the solution $\psi(T, x)$ of \eqref{wavepsi} is supported on $\{|x| \leq T+L\}$ due to the finite speed of propagation of solutions to the wave equation and the support assumption on the source term $\eta$.
In particular, we note that the portion of $\psi(T, x)$ that stems from the source term is exactly
$$\psi_\text{inhom}(T,x) = \int_{|x-z| \leq T} \eta(T - |x-z|, z) \frac{dz}{|x - z|},$$
and a brief calculation shows that $|x| > T + L$ implies 
$$|z| > \left (T - |x-z| \right ) + L > \zeta \left (T - |x-z| \right ) + L$$ 
for any $z \in \bfR^3$ satisfying $|x - z| \leq T$.
As $\eta(t,x)$ is supported on the set $\{(t,x) : |x| \leq \zeta t + L\}$, such values lie outside of the support of this function when evaluated at the point $(T - |x - z|, z)$. 
Hence, $\eta(T - |x - z|, z) = 0$ for $|x| > T + L$ and  any $z \in \bfR^3$ satisfying $|x - z| \leq T$,
and we conclude that $\psi(T, x)$ is supported on $\{|x| \leq T+L\}$.

Furthermore, as $\psi_\infty \left (\frac{x}{t} \right ) = 0$ for $|x| \geq \gamma t$, we see that
$\psi_\infty \left (\frac{x}{t} \right )$ is supported on $\{ |x| \leq \gamma t\}$, and this is a subset of $\{|x| \leq t + L\}$.
Thus, the new ``data'' functions $\widetilde{\psi}_0(x)$ and $\widetilde{\psi}_1(x)$ are also supported on $\{|x| \leq T+L\}$. 
Recall that for $t \geq T$ we have
$|x| \leq \gamma t.$
Therefore, we further take $t > \frac{2T + L}{1-\gamma}$ so that for $z$ in the support of $\widetilde{\psi}_0$ and $\widetilde{\psi}_1$, we find
\begin{equation}
\label{out_support}
|x - z| \leq \gamma t + |z| \leq \gamma t + T + L  < t-T
\end{equation}
where $|x| \leq \gamma t$.
However, the integrals in \eqref{psiH_rep} are taken over $|x - z| = t - T$, and as shown by \eqref{out_support}, this set lies outside of the support of the integrands. Therefore, we have
\begin{equation}
\label{psiH}
\psi_H(t,x) = 0
\end{equation}
for $|x| \lesssim \gamma t$.

Finally, estimating $\psi_{II}$ requires more care than the previous terms. We first denote 
$$\mcN(t,x) = \left [\eta(t,x) - t^{-4} \eta_\infty \left (\frac{x}{t} \right )  \right ]  \mathbbm{1}_{|x| \leq \gamma t}.$$
With this, we can express the solution of the associated inhomogeneous wave equation with zero ``data'' at $t = T$ as
$$\psi_{II}(t,x) = \int_{|x-z| \leq t - T} \mcN(t - T - |x-z|, z) \frac{dz}{|x - z|}$$
for $t \geq T$.
For ease of notation, we let $s = t - T$ and rewrite this as
$$\psi_{II}(s+T,x) = \int_{|x-z| \leq s} \mcN(s - |x-z|, z) \frac{dz}{|x - z|}$$
for $s \geq 0$.
Upon multiplying by $t^{-4}$, the assumed estimate on the integrand gives
$$ \left | \eta(t,x) - t^{-4}\eta_\infty \left ( \frac{x}{t} \right ) \right | \lesssim t^{-5}\ln^8(t).$$
In addition, as $\{|x| \leq \gamma t\} \subset \{|x| \leq t + L\}$, we can use the support of $\mcN$ within this estimate to find
$$|\mcN(t,x)| \lesssim \left (t + |x| + 2L \right )^{-5}\ln^8\left (t + |x| + 2L \right ) \mathbbm{1}_{|x| \leq t+L}.$$
This implies
$$|\psi_{II}(s+T,x)| \lesssim \int_{|x-z| \leq s} (s - |x-z| +|z| + 2L)^{-5}\ln^8(s - |x-z| + |z| + 2L) |x-z|^{-1} \mathbbm{1}_{|z| \leq s - |x-z| + L} dz.$$
To estimate the remaining integral, we rely upon Lemma \ref{LGS_CV}.
More specifically, we define a cutoff function $\phi_L(\lambda)$ to be zero for $\lambda >  \tau + L$ and then choose
$$\Phi(\tau, \lambda) = (\tau+\lambda+2L)^{-5}\ln^8(\tau+\lambda+2L) \phi_L(\lambda) \qquad \mathrm{and} \qquad \Psi(\sigma) = \sigma^{-1}$$ 
in invoking Lemma \ref{LGS_CV}. 
Thus, the integral of interest becomes
$$|\psi_{II}(s+T,x)| \lesssim r^{-1} \int_0^{s} \int_{a}^{b} (\tau + \lambda + 2L)^{-5} \ln^8(\tau + \lambda + 2L) \lambda \phi_L(\lambda) \  d\lambda  d\tau$$
where 
$$r = |x|, \qquad a = |s - r - \tau|, \qquad \mathrm{and} \qquad b = s + r - \tau.$$
Using the inequalities
$$\tau + \lambda + 2L \leq s + r + 2L,  \qquad \tau + \lambda + 2L\geq s - r + 2L, \qquad \mathrm{and} \qquad \lambda \leq \tau+ \lambda + 2L$$ 
simplifies the estimate to
$$|\psi_{II}(s+T,x)| \lesssim r^{-1} \ln^8(s+r+2L) (s - r + 2L)^{-1} \int_0^{s} \int_{a}^{b} (\tau + \lambda + 2L)^{-3} \  d\lambda  d\tau.$$
Then, upon integrating in $\lambda$ we find
\begin{equation}
\label{psiII_est}
|\psi_{II}(s+T,x)| \lesssim r^{-1} \ln^8(s+r+2L) (s - r + 2L)^{-1} \int_0^{s} \frac{(b-a)(b+a + 2\tau + 4L)}{(\tau + a + 2L)^2(\tau + b + 2L)^2} \ d\tau.
\end{equation}
To estimate the remaining integral, we first define the function
$$\mu(x)=\begin{cases}x, &x\geq0,\\ 0, &x<0\end{cases}$$ 
and split this into two pieces, namely
$$\int_0^{s} \frac{(b-a)(b+a + 2\tau + 4L)}{(\tau + a + 2L)^2(\tau + b + 2L)^2}d\tau = I + II$$
where
$$I := \int_0^{\mu(s-r)} \frac{(b-a)(b+a + 2\tau + 4L)}{(\tau + a + 2L)^2(\tau + b + 2L)^2} \ d\tau $$
and
$$II := \int_{\mu(s-r)}^s \frac{(b-a)(b+a + 2\tau + 4L)}{(\tau + a + 2L)^2(\tau + b + 2L)^2} \ d\tau. $$
To estimate $I$, we note that $\tau \leq s-r$ on this interval so that
$$b - a = 2\min\{r, s-\tau\} = 2r \qquad \mathrm{and} \qquad b + a = 2\max\{r, s-\tau\} = 2(s-\tau).$$
Furthermore, this implies $\tau + a = s- r$ and $ \tau + b = s+r$, and we find
\begin{eqnarray*}
I & = & 4r \int_0^{\mu(s-r)} \frac{s + 2L}{(s - r + 2L)^2 (s + r + 2L)^2} \ d\tau\\
  & \lesssim & r(s - r + 2L)^{-2} (s + r + 2L)^{-2} (s-r)(s+2L)\\
  & \lesssim & r(s + r + 2L)^{-1} (s - r + 2L)^{-1}
\end{eqnarray*}
Contrastingly, we estimate $II$, by first noting that $\tau \geq s-r$ on this interval so that
$$b - a = 2(s-\tau) \leq 2r   \qquad \mathrm{and} \qquad b + a = 2r.$$
Furthermore, because $\tau + a = 2\tau - s + r$ and $ \tau + b = s+r$, we find
\begin{eqnarray*}
II & = & 4r \int_{\mu(s-r)}^s \frac{\tau + r + 2L}{(2\tau - s + r + 2L)^2 (s + r + 2L)^2} \ d\tau\\
  & \lesssim & r(s + r + 2L)^{-1} \int_{\mu(s-r)}^\infty (\tau + 2L)^{-2} \ d\tau\\
  & \lesssim & r(s + r + 2L)^{-1} (s - r + 2L)^{-1}
\end{eqnarray*}
as $\mu(s - r) \geq s-r$.
Now, as $I$ and $II$ satisfy the same estimate, we insert these within \eqref{psiII_est} to find
$$|\psi_{II}(s+T,x)| \lesssim (s + r + 2L)^{-1} (s - r + 2L)^{-2} \ln^8(s+r+2L).$$
Changing back to the $t$ and $T$ variables and removing $r = |x| > 0$ where possible yields
$$|\psi_{II}(t,x)| \lesssim (t - T  + 2L)^{-1} (t - T - |x| + 2L)^{-2} \ln^8(t - T+ |x| + 2L).$$
Therefore, for $t$ sufficiently large and $|x| \leq \gamma t$, we find
\begin{eqnarray*}
|\psi_{II}(t,x)| & \lesssim & (t - T + 2L)^{-1} ( (1-\gamma)t - T + 2L)^{-2} \ln^8( (1+\gamma) t - T + 2L)\\
& \lesssim & t^{-3}\ln^8(t).
\end{eqnarray*}

Finally, assembling the estimate of $\psi_{II}$ along with \eqref{psi_I} and \eqref{psiH} gives
\begin{eqnarray*}
\left | \psi(t,x) - t^{-2} \psi_\infty \left ( \frac{x}{t} \right ) \right | & \leq & \left |\psi_I(t,x) - t^{-2} \psi_\infty \left ( \frac{x}{t} \right ) \right | + | \psi_H(t,x)| + |\psi_{II}(t,x)|\\
& \lesssim & t^{-3}\ln^8(t),
\end{eqnarray*}
for $|x| \lesssim \gamma t$.
Upon multiplying by $t^2$, the stated estimate follows, and the proof is complete.
\end{proof}

With the lemma established, we now use it along with previous estimates of derivatives of $\rho$ and $j$ in order to construct the limiting asymptotic behavior of the electric and magnetic fields.

\begin{lemma}
\label{LField}
There exist $E_\infty, B_\infty \in C^{2,\delta}(\Gamma_\gamma)$ 
such that
$${\sup_{|x| \lesssim \gamma t}} \left | t^{2} E(t,x) - E_\infty\left (\frac{x}{t} \right )\right |\lesssim t^{-1}\ln^{8}(t),$$
and
$${\sup_{|x| \lesssim \gamma t}} \left | t^{2} B(t,x) - B_\infty\left (\frac{x}{t} \right )\right |\lesssim t^{-1}\ln^{8}(t).$$
%
Furthermore, there is $K_\infty \in C^{2,\delta}(\Gamma_\gamma)$ such that
$$\sup_{\substack{|x| \lesssim \ln(t)\\ |p| \leq \beta}}   \left | t^2 K(t, x + v(p)t , p) - K_\infty \left ( p \right ) \right |  \lesssim t^{-1}\ln^{8}(t).$$
\end{lemma}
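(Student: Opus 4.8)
The plan is to realize each Cartesian component of $E$ and $B$ as the solution of an inhomogeneous scalar wave equation whose source is built from derivatives of $\rho$ and $j$, and then invoke Lemma \ref{LWave}. Taking $\partial_t$ in Ampère's law $\partial_t E - \nabla\times B = -j$ and substituting Faraday's law $\partial_t B = -\nabla\times E$ and Gauss's law $\nabla\cdot E=\rho$, together with the identity $\nabla\times(\nabla\times V) = \nabla(\nabla\cdot V) - \Delta V$, gives $\Box_{t,x}E = -\nabla\rho - \partial_t j$, and similarly $\Box_{t,x}B = \nabla\times j$. Componentwise, $\Box_{t,x}E^i = -\partial_{x_i}\rho - \partial_t j^i$ and $\Box_{t,x}B^i = (\nabla\times j)^i$, with initial data $E^i(0,\cdot)=E_0^i$, $\partial_t E^i(0,\cdot)=(\nabla\times B_0 - j(0,\cdot))^i$, $B^i(0,\cdot)=B_0^i$, $\partial_t B^i(0,\cdot)=-(\nabla\times E_0)^i$, all of which are $C^1$ and supported in $\Gamma_L$. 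For the $C^2$ solutions furnished by Theorem \ref{T0} these sources are $C^1$ and, by Lemma \ref{LPrelim}, supported on $\{|x|\le\zeta t + L\}$, so $E^i$ and $B^i$ coincide with the wave solutions produced by Lemma \ref{LWave} for the corresponding data and source.

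Next I would identify the self-similar limits of the sources. Lemmas \ref{LDensityderivative} and \ref{LCurrentderivative} yield, for $|x|\lesssim\gamma t$,
$$\left| t^4\bigl(-\partial_{x_i}\rho - \partial_t j^i\bigr)(t,x) - \Bigl(-\partial_{q_i}\rho_\infty + 3 j^i_\infty + q\cdot\nabla_q j^i_\infty\Bigr)\!\left(\tfrac{x}{t}\right)\right| \lesssim t^{-1}\ln^8(t)$$
and $\bigl| t^4(\nabla\times j)^i(t,x) - (\nabla_q\times j_\infty)^i(x/t)\bigr| \lesssim t^{-1}\ln^8(t)$. Since $F_\infty\in C_c^2$, the densities $\rho_\infty$ and $j_\infty=q\rho_\infty$ are $C^2$ with support inside $\Gamma_\zeta\subset\Gamma_1$, so these limiting sources belong to $C^1(\Gamma_1)$ with compact support, meeting the hypotheses of Lemma \ref{LWave}. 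That lemma then produces $E_\infty^i, B_\infty^i\in C^{2,\delta}(\Gamma_\gamma)$ for any $\delta\in(0,1)$, characterized precisely as the unique solutions of $\mathcal{L}E_\infty^i = -\partial_{q_i}\rho_\infty + 3 j^i_\infty + q\cdot\nabla_q j^i_\infty$ and $\mathcal{L}B_\infty^i = (\nabla_q\times j_\infty)^i$ with vanishing boundary data on $|q|=\gamma$ (the problems appearing in Theorem \ref{T1}), together with $\sup_{|x|\lesssim\gamma t}|t^2 E(t,x) - E_\infty(x/t)|\lesssim t^{-1}\ln^8(t)$ and the same estimate for $B$.

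For the Lorentz force, fix $|x|\lesssim\ln(t)$ and $|p|\le\beta$. By Lemma \ref{LPrelim} the point $x+v(p)t$ satisfies $|x+v(p)t|\lesssim\gamma t$, so the field estimates apply there, and $\frac{x+v(p)t}{t}=v(p)+\frac{x}{t}$ lies in $\Gamma_\gamma$ for $t$ large since $|v(p)|\le\zeta<\gamma$ and $|x/t|\lesssim t^{-1}\ln(t)\to 0$. As $E_\infty$ and $B_\infty$ are Hölder continuous, hence Lipschitz on compact subsets of $\Gamma_\gamma$,
$$\left|E_\infty\!\left(v(p)+\tfrac{x}{t}\right) - E_\infty(v(p))\right| \lesssim \left|\tfrac{x}{t}\right| \lesssim t^{-1}\ln(t),$$
and similarly for $B_\infty$; combining this with the field estimates of the previous paragraph and $|v(p)|\le\beta$ gives $|t^2E(t,x+v(p)t) - E_\infty(v(p))|\lesssim t^{-1}\ln^8(t)$ and the analogous bound for $B$. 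Since $K(t,y,p)=E(t,y)+v(p)\times B(t,y)$, setting $K_\infty(p):=E_\infty(v(p))+v(p)\times B_\infty(v(p))$ — which is $C^{2,\delta}$ as a composition of the smooth map $v$ (carrying $\overline{\Gamma}_\beta$ into $\overline{\Gamma}_\zeta\subset\Gamma_\gamma$) with $E_\infty$ and $B_\infty$ — yields $\sup_{|x|\lesssim\ln(t),\,|p|\le\beta}|t^2 K(t,x+v(p)t,p) - K_\infty(p)|\lesssim t^{-1}\ln^8(t)$, as claimed.

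The analytically substantial step — transferring the asymptotics of $\rho$, $j$ and their derivatives to the fields — is exactly what Lemma \ref{LWave} encapsulates, so the work here is largely organizational: deriving the wave equations cleanly, verifying that their limiting source terms coincide with the right-hand sides of the elliptic problems that define $E_\infty$ and $B_\infty$, and checking the support and regularity hypotheses. The one place requiring a genuine (though short) additional argument is the passage to $K_\infty$, where the moving base point $v(p)+x/t$ must be replaced by $v(p)$ using the local Lipschitz regularity of $E_\infty$ and $B_\infty$, while one checks that all relevant points remain inside $\Gamma_\gamma$.
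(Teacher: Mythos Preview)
Your proof is correct and follows essentially the same route as the paper's: derive the componentwise wave equations for $E^i$ and $B^i$, feed the source asymptotics from Lemmas \ref{LDensityderivative} and \ref{LCurrentderivative} into Lemma \ref{LWave}, set $K_\infty(p)=E_\infty(v(p))+v(p)\times B_\infty(v(p))$, and absorb the $x/t$ shift using the regularity of $E_\infty,B_\infty$. One small wording fix: H\"older continuity alone does not imply Lipschitz continuity, but since $E_\infty,B_\infty\in C^{2,\delta}\subset C^1$ the Lipschitz bound you need is immediate---indeed the paper simply invokes $\|\nabla_q E_\infty\|_\infty+\|\nabla_q B_\infty\|_\infty$ at this step.
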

\begin{proof}
Due to Lemma \ref{LPrelim}, the supports of $\rho(t,y)$, $j(t,y)$, and their derivatives all lie within $\{(t,y) : |y| \leq \zeta t + L\}$.
Additionally, Lemmas \ref{LDensityderivative} and \ref{LCurrentderivative} provide the estimates
$$\sup_{x \in \bfR^3}   \left | t^4\partial_{x_i}\rho(t,x) - \partial_{q_i}\rho_\infty \left ( \frac{x}{t} \right) \right | \lesssim t^{-1} \ln^{8}(t)$$
and
$$\sup_{x\in \bfR^3} \left | t^4 \partial_tj(t,x) +\left [3j_\infty\left (\frac{x}{t} \right) + \frac{x}{t} \cdot \nabla_qj_\infty\left (\frac{x}{t} \right )\right ] \right | \lesssim t^{-1} \ln^8(t)$$
for every $i=1,2,3$.
From Maxwell's equations, a straightforward calculation shows that the $i$th component of the electric field satisfies the inhomogeneous wave equation
$$\Box_{t,x} E^i = - \partial_{x_i} \rho(t,x) - \partial_t j^i(t,x)$$
with the given initial data expressed in terms of $E_0(x)$ and $B_0(x)$.
Thus, applying Lemma \ref{LWave} with $\psi(t,x) = E^i(t,x)$ and $\eta(t,x) = -\partial_{x_i} \rho(t,x) - \partial_t j^i(t,x)$
provides the existence of a limiting electric field $E^i_\infty \in C^{2,\delta}(\Gamma_\gamma)$ and the estimates
\begin{equation}
\label{Eest}
\left | t^2E^i(t,x) - E^i_\infty \left ( \frac{x}{t} \right ) \right | \lesssim t^{-1}\ln^8(t)
\end{equation}
for every $i=1,2,3$ with $|x| \lesssim \gamma t$.

Similarly a wave equation for each component of the magnetic field can be derived from Maxwell's equations resulting in
$$\Box_{t,x} B^i = \left (\nabla \times j \right)^i(t,x).$$
Thus, using the spatial derivative estimate on $j$ from Lemma \ref{LCurrentderivative}, namely
$$\sup_{x\in \bfR^3} \left | t^4 \partial_{x_i}j(t,x) -  \partial_{q_i}j_\infty\left (\frac{x}{t} \right) \right |  \lesssim t^{-1} \ln^8(t)$$
for every $i = 1, 2, 3$,
and applying Lemma \ref{LWave} with $\psi(t,x) = B^i(t,x)$, and $\eta(t,x) = (\nabla \times j)^i(t,x)$
provides the existence of a limiting magnetic field $B^i_\infty \in C^{2,\delta}(\Gamma_\gamma)$ and the corresponding estimate
\begin{equation}
\label{Best}
\left | t^2B^i(t,x) - B^i_\infty \left ( \frac{x}{t} \right ) \right | \lesssim t^{-1}\ln^8(t)
\end{equation}
for every $i=1,2,3$  with $|x| \lesssim \gamma t$.
Finally, assembling the convergence estimates \eqref{Eest} and \eqref{Best} and defining $E_\infty(q)$ and $B_\infty(q)$ componentwise, we use these to construct the limiting Lorentz force given by
$$K_\infty(p) := E_\infty(v(p)) + v(p) \times B_\infty(v(p))$$
for every $|p| \leq \beta$.
Further, considering $|x| \lesssim \ln(t)$ and $|p| \leq \beta$, and using the boundedness of $v(p)$ then directly yields
\begin{eqnarray*}
 \left | t^2 K(t, x + v(p)t , p) - K_\infty \left ( p \right ) \right |  & \leq &  \left | t^2 E(t, x + v(p)t) - E_\infty \left ( v(p) \right ) \right | +  \left | t^2 B(t, x + v(p)t) - B_\infty \left ( v(p) \right ) \right |\\
 & \leq &  \left | t^2 E(t, x + v(p)t) - E_\infty \left ( \frac{x+v(p)t}{t} \right ) \right |\\
 & & \quad + \left | E_\infty \left ( v(p) + \frac{x}{t} \right ) - E_\infty \left ( v(p) \right ) \right |\\
 & & \quad + \left | t^2 B(t, x + v(p)t) - B_\infty \left ( \frac{x+v(p)t}{t} \right ) \right |\\
 & & \quad  + \left | B_\infty \left ( v(p) + \frac{x}{t} \right ) - B_\infty \left ( v(p) \right ) \right | \\
 & \lesssim &  t^{-1}\ln^{8}(t) + \left ( \Vert \nabla_q E \Vert_\infty + \Vert \nabla_q B \Vert_\infty \right )\frac{|x|}{t}\\
 & \lesssim &  t^{-1}\ln^{8}(t).
\end{eqnarray*}
\end{proof}

In the next section, we will use this limiting Lorentz force and the associated convergence estimate to obtain a modified scattering result for solutions.

\subsection{Modified Scattering}
\label{sect:modscatt}

With the field estimates solidified, we prove that the distribution function scatters to a limiting value as $t \to \infty$ along a specific trajectory in phase space that may differ from its linear profile, and this is known as ``modified scattering''.
Many of the ideas in this direction arise from \cite{Ionescu, Pankavich2021, Pausader}. 
First, we remind the reader that using the field estimates of Lemma \ref{LPrelim} in Lemma \ref{Xsupp} produces
\begin{equation}
\label{Y}
|\mcY(t,0,x,p)| \lesssim \ln(t)
\end{equation} 
where the implicit constant in this inequality may depend upon fixed $(x,p) \in \bS_g(0)$.
Thus, the spatial characteristics of $g$ grow unbounded in time and require an additional logarithmic correction to construct trajectories along which $f$ converges as $t \to \infty$. 
\begin{lemma}
\label{LModScattering}
There exists $f_\infty \in C(\bfR^6)$ such that
$$h(t,x,p) = f \left (t,x + v(p)t - \ln(t) \mathbb{A}(p)K_\infty(p), p \right)$$
satisfies 
$$h(t,x, p) \to f_\infty(x,p)$$ 
uniformly as $t \to \infty$.
\alt{Moreover}, we have the convergence estimate
$$\sup_{(x,p) \in \bfR^6} \left | f \biggl (t,x + v(p)t - \ln(t)\mathbb{A}(p) K_\infty(p), p \biggr ) - f_\infty(x,p) \right |  \lesssim t^{-1}\ln^8(t).$$
\end{lemma}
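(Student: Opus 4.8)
The plan is to transform the Vlasov equation into the shifted frame, show that the resulting distribution function has a time derivative that is integrable in time, and then pass to the limit. Concretely, recall from the introduction that, setting $g(t,x,p) = f(t,x+v(p)t,p)$ and then $h(t,x,p) = g(t,x - \ln(t)\mathbb{A}(p)K_\infty(p),p)$, the function $h$ satisfies
$$\partial_t h(t,x,p) = t^{-1}\mathbb{A}(p)\Bigl(t^2 K(t,\mcW(t,x,p),p) - K_\infty(p)\Bigr)\cdot\nabla_x g(t,\mcW'(t,x,p),p) - K(t,\mcW(t,x,p),p)\cdot\nabla_p g(t,\mcW'(t,x,p),p),$$
where $\mcW(t,x,p) = x + v(p)t - \ln(t)\mathbb{A}(p)K_\infty(p)$ and $\mcW'$ denotes the appropriate shifted spatial argument; the point is that both the $\nabla_x g$ and $\nabla_p g$ terms are evaluated at spatial points lying within $|x'| \lesssim \ln(t)$ whenever $(x,p)$ is in the (shifted) support, so the field estimates on the "translated curves" half of $\mcK_\ell(t)$ apply. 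First I would record that the spatial support of $h(t)$ satisfies $|x| \lesssim \ln(t)$ by Lemma~\ref{Xsupp} together with the uniform boundedness of $\mathbb{A}(p)K_\infty(p)$, so all estimates only need to be established on this set and uniformity in $p$ over $|p| \le \beta$ is automatic.

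The key steps, in order, are: (i) bound the first term in $\partial_t h$ by $t^{-1}\ln^8(t)\cdot t^{-1}\cdot\|\nabla_x g(t)\|_\infty$ using the Lorentz force convergence estimate $|t^2 K(t,x+v(p)t,p) - K_\infty(p)| \lesssim t^{-1}\ln^8(t)$ from Lemma~\ref{LField} (valid for $|x| \lesssim \ln(t)$, $|p|\le\beta$) and the uniform bound $\|\nabla_x g(t)\|_\infty \lesssim 1$ from Lemma~\ref{D2g} (equivalently Theorem~\ref{GS}); this contributes $\lesssim t^{-2}\ln^8(t)$; (ii) bound the second term by $\mcK_0(t)\cdot\|\nabla_p g(t)\|_\infty \lesssim t^{-2}\cdot\ln^2(t)$ using Lemma~\ref{LPrelim} and Lemma~\ref{Dvg}; (iii) conclude $\|\partial_t h(t)\|_\infty \lesssim t^{-2}\ln^8(t)$, which is integrable on $[1,\infty)$; (iv) deduce as in Lemma~\ref{Funif} that for $t \le s$,
$$\|h(t) - h(s)\|_\infty = \Bigl\|\int_t^s \partial_t h(\tau)\,d\tau\Bigr\|_\infty \le \int_t^\infty \|\partial_t h(\tau)\|_\infty\,d\tau \lesssim t^{-1}\ln^8(t),$$
so $\{h(t)\}$ is uniformly Cauchy; (v) since each $h(t)$ is continuous on $\bfR^6$ and the convergence is uniform, there is $f_\infty \in C(\bfR^6)$ with $h(t) \to f_\infty$ uniformly and $\|h(t) - f_\infty\|_\infty \lesssim t^{-1}\ln^8(t)$, which, unwinding the definition of $h$, is exactly the claimed estimate. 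One should also note $f_\infty$ is supported in a bounded set since each $h(t)$ is, and $f_\infty \ge 0$ since each $f(t) \ge 0$.

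The main obstacle is making step~(i) fully rigorous: one must verify that the shifted argument $\mcW'(t,x,p)$ at which $\nabla_x g$ is evaluated genuinely lies in the regime $|x'| \lesssim \ln(t)$ where the Lorentz-force convergence of Lemma~\ref{LField} holds, and that the logarithmic shift $\ln(t)\mathbb{A}(p)K_\infty(p)$ does not push points outside this window — this is where the choice of the correction term $\ln(t)\mathbb{A}(p)K_\infty(p)$, rather than something growing faster, is essential, and it is precisely matched to the $\int^t s\mcK_0(s)\,ds \sim \ln(t)$ growth of the free-streaming-translated characteristics in Lemma~\ref{Xsupp}. A secondary technical point is differentiating $h$ carefully: the chain rule for $\partial_t h$ produces the stated expression only after using the Vlasov equation \eqref{VMg} for $g$ and the identity $\frac{d}{dt}[\ln(t)\mathbb{A}(p)K_\infty(p)] = t^{-1}\mathbb{A}(p)K_\infty(p)$, and one must check the $t^{-1}\mathbb{A}(p)K_\infty(p)\cdot\nabla_x g$ piece coming from the time derivative of the shift combines with the $t\mathbb{A}(p)K(t,\cdot)\cdot\nabla_x g$ piece from \eqref{VMg} to yield exactly the factor $t^{-1}\mathbb{A}(p)(t^2 K - K_\infty)\cdot\nabla_x g$ — the bookkeeping is routine but is the one place sign and factor errors could creep in.
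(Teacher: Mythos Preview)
Your proposal is correct and follows essentially the same approach as the paper's proof: derive the evolution equation for $h$, split $\partial_t h$ into the two pieces you describe, bound the $\nabla_p g$ term by $\mcK_0(t)\mcG_1(t)\lesssim t^{-2}\ln^2(t)$ and the $\nabla_x g$ term by $t^{-1}\cdot t^{-1}\ln^8(t)$ via Lemma~\ref{LField} and $\|\nabla_x g(t)\|_\infty\lesssim 1$, then integrate in time. The paper handles your ``main obstacle'' exactly as you anticipate: it defines $\mcW(t,x,p)=x-\ln(t)\mathbb{A}(p)K_\infty(p)$ (note the paper's $\mcW$ is your $\mcW'$, without the $v(p)t$ shift), observes that outside $|x|\lesssim\ln(t)$ one has $|\mcW|\gtrsim\ln(t)$ so $g(t,\mcW,p)=0$ by the support bound \eqref{Y}, and hence restricts to $|x|\lesssim\ln(t)$, $|p|\le\beta$, where $|\mcW|\lesssim\ln(t)$ and Lemma~\ref{LField} applies directly.
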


\begin{proof}
Let
$$\mcW(t,x,p) = x -  \ln(t)\mathbb{A}(p)K_\infty(p)$$
for every $t \geq 1$ and $x,p \in \bfR^3$ so that
$$h(t,x,p) := f \biggl (t,x + v(p)t -  \ln(t)\mathbb{A}(p) K_\infty(p), p \biggr ) = g(t, \mcW(t,x,p), p).$$
We will further take $t$ to be sufficiently large as necessary.
Because of the compact momentum support of $g$, we note that $g(t,\mcW(t,x,p),p) = 0$ for $|p| > \beta$.
Additionally, there is $C_0>0$ 
such that $|x| \geq C_0\ln(t)$ implies $|\mcW(t,x,p)| \geq C_1 \ln(t)$ for some $C_1 > 0$, and thus $g(t,\mcW(t,x,p),p) = 0$ due to \eqref{Y}.
Hence, $g(t,\mcW(t,x,p),p) = 0$ whenever $|x| \geq C\ln(t)$ or $|p| > \beta$, and it suffices to take $|x| \lesssim \ln(t)$ and $|p| \leq \beta$ throughout the proof. This further implies $|\mcW(t,x,p)| \lesssim \ln(t)$.
Because $g$ satisfies \eqref{VMg}, we deduce that $h$ satisfies
$$\partial_t h =  t^{-1}  \left ( t^2 K(t, \mcW+v(p)t,p) - K_\infty(p) \right ) \cdot \mathbb{A}(p) \nabla_x g(t,\mcW,p) - K(t, \mcW+v(p)t)\cdot \nabla_p g(t,\mcW,p).$$
Similar to the proof of Lemma \ref{Funif}, we wish to show that 
$\displaystyle \| \partial_t h(t) \|_\infty$ is integrable in order to establish the existence of a limiting function in this norm.

To this end, we decompose $\partial_t h$ so that
$$\left |\partial_t h(t,x,p) \right | \alt{\lesssim} I+ II$$
where
$$I \leq t^{-1}\left | t^2 K(t, \mcW+v(p)t, p) - K_\infty(p) \right | \  \left |\nabla_x g(t,\mcW,p)  \right |$$
and
$$II = \left | K(t, \mcW+v(p)t, p)\cdot \nabla_p g(t,\mcW,p) \right |.$$
The second term is well-behaved. Indeed, using Lemmas \ref{LPrelim} and \ref{Dvg} we find
\begin{equation}
\label{IIh}
II \leq \sup_{\substack{|x| \lesssim \ln(t)\\ |p| \leq \beta}} |K(t, x+v(p)t, p) | \mcG_1(t)  \lesssim \mcK_0(t) \mcG_1(t)  \lesssim t^{-2}\ln^2(t).
\end{equation}
Using Lemma \ref{LPrelim} yields
$$ \| \nabla_x g(t) \|_\infty  = \| \nabla_x f(t) \|_\infty \lesssim 1.$$
Thus, the latter term in $I$ is uniformly bounded in time, which implies
$$I \lesssim t^{-1} \left | t^2 K(t, \mcW+v(p)t, p) - K_\infty(p) \right |.$$
Because $\left |\mcW(t,x,p) \right |  \lesssim \ln(t)$ and $|p| \leq \beta$, we invoke Lemma \ref{LField} to conclude
\begin{equation}
\label{Ih}
I \lesssim t^{-2}\ln^8(t).
\end{equation}
Combining \eqref{IIh} and \eqref{Ih}, we have
$$ \|\partial_t h(t) \|_\infty \lesssim t^{-2}\ln^8(t).$$
As this bound is integrable in time, there is $f_\infty \in C(\bfR^6)$ such that
$$\| h(t) -f_\infty \|_\infty \lesssim t^{-1}\ln^8(t),$$
which completes the proof.
\end{proof}

\section{Proofs of Theorems}
\label{sec:thmpfs}

We first prove the refined small data theorem, which comprises the majority of this section.
\begin{proof}[Proof of Theorem \ref{T0}]
To begin, we note that the smallness assumption of Theorem \ref{T0} can be guaranteed to satisfy the hypotheses of Theorem \ref{GS} by taking $\epsilon_0$ sufficiently small.
Hence, the estimates from the latter result, for instance on first-order derivatives of the fields and particle distributions, follow immediately. 
To establish the new result, which requires estimates on second derivatives, we follow a similar argument to that of \cite{GS} and let
$$ \Vert K \Vert_0 = \sup_{\substack{t\geq 0\\ x \in \mathbb{R}^3}} \biggl ( ( t + |x| + 2L)(t - |x| + 2L) \left ( |E(t,x)| + |B(t,x)| \right) \biggr ),$$
$$ \Vert K \Vert_1 = \sup_{\substack{t\geq 0\\ x \in \mathbb{R}^3}} \left (\frac{( t + |x| + 2L)(t - |x| + 2L)^2}{\ln(t + |x| + 2L)} \left ( |\nabla_xE(t,x)| + |\nabla_xB(t,x)| \right) \right ),$$
$$ \Vert K \Vert_2 = \sup_{\substack{t\geq 0\\ x \in \mathbb{R}^3}} \left ( \frac{(t + |x| + 2L)(t - |x| + 2L)^3}{\ln(t + |x| + 2L)} \left ( |\nabla_x^2E(t,x)| + |\nabla_x^2B(t,x)| \right) \right ),$$
and
$$\Vert K \Vert = \Vert K \Vert_0 + \Vert K \Vert_1 + \Vert K \Vert_2.$$
Let $\epsilon > 0$ and define
$$\mcX = \{ K \in C^2 : K(t,x) = 0 \ \mathrm{for} \ |x| > t + L \ \mathrm{and} \ \Vert K \Vert \leq \epsilon \}.$$
The iteration scheme is, by now, standard. 
Given $K \in \mcX$, we define the associated characteristic system \eqref{char} and $f^\alpha(t,x,p)$ as the evaluation of $f^\alpha_0$ along the characteristics, so that $f^\alpha$ satisfies the Vlasov equation.
With this, we can define $\rho(t,x)$ and $j(t,x)$ as in \eqref{RVM} and finally, the new electric and magnetic fields $E^*(t,x)$ and $B^*(t,x)$ as solutions of Maxwell's equations with the associated source terms given by derivatives of $\rho$ and $j$.
Finally, we label $K^* = (E^*, B^*)$.
Under this iteration scheme, Glassey and Strauss \cite[p. 177]{Glassey} established for a given $L > 0$
\begin{equation}
\label{K*1}
\Vert K^*\Vert_0 + \Vert K^*\Vert_1 \leq C\epsilon_0 \left ( 1 + \Vert K\Vert_0 + \Vert K\Vert_1^2 \right ).
\end{equation}
We will further derive the estimate
\begin{equation}
\label{K*2}
\Vert K^*\Vert_2 \leq C_0\epsilon_0 \left ( 1 + \Vert K \Vert + \Vert K\Vert_0^2 + \Vert K\Vert_1^2 \right )
\end{equation}
for some $C_0 > 0$,
which, when combined with \eqref{K*1}, will then imply
$$\Vert K^*\Vert \leq C\epsilon_0 \left ( 1 + \Vert K\Vert + \Vert K\Vert^2 \right )$$
where $C$ is independent of $\epsilon_0$.
Taking $K \in \mcX$ so that $\Vert K \Vert \leq \epsilon$ and then taking $\epsilon$ and $\epsilon_0$ sufficiently small will show $\Vert K^* \Vert \leq \epsilon$, and thus $K^* \in \mcX$. 
Finally, this can be used to show (cf. \cite[Section 5.8]{Glassey} and \cite{GS}) that the iterates and their first and second derivatives converge pointwise and satisfy the stated decay estimates.
What remains is to establish \eqref{K*2}, and the remainder of the proof will focus on doing exactly this.

Let $K \in \mcX$ be given. 
Then, we have
$$|\nabla_x^2 E(t,\mcX(t, \tau, x, p))| + |\nabla_x^2 B(t,\mcX(t, \tau, x, p))| \leq \frac{C\ln(t+2)}{(t+1)\left (t - |\mcX(t)| + 2L \right )^3} \leq \frac{C\ln(t+2)}{(t+L)^4} \lesssim t^{-4}\ln(t)$$
for all $\tau \geq 0$, $(x,p) \in \mcS_f(\tau)$
and 
$$|\nabla_x^2 E(t,x +v(p)t)| + |\nabla_x^2 B(t,x +v(p)t)| \leq \frac{C\ln(t+2)}{(t+1)\left (t - |x +v(p)t| + 2L \right )^3} \lesssim t^{-4}\ln(t)$$
for $|x| \lesssim \ln(t)$ and $|p| \leq \beta$.
Putting these together, we have established
$$\mcK_2(t) \lesssim t^{-4} \ln(t),$$
and the results of Section $4$, which rely on this estimate, are all valid.
We estimate $\Vert K^* \Vert_2$  by utilizing the representation theorem of Glassey and Strauss applied to second derivatives of the fields.
In their proof of \eqref{K*1}, they rely on a representation of the fields $E^*$ and $B^*$, as well as their first derivatives, in coordinates that respect the symmetries of the wave and Vlasov equations. This was introduced in \cite{GS2} and later presented in \cite[Theorems 5.3.1 \& 5.4.1]{Glassey}. For completeness, let us recall the representation of the first derivatives of the fields (stated for $E^*$, with an analogous representation for $B^*$).

\begin{theorem}[\cite{Glassey}, Theorem 5.4.1]
Assume the hypotheses of Theorem \ref{GS}. Then for $i,k=1,2,3$ the following representation holds:
	\begin{align*}
	\partial_{x_k} E^{*i}
	&=
	\text{data}
	+
	\iint_{|\omega|=1} d(\omega,v(p))f\ d\omega dp
	+
	\iint_{|x-y|\leq t} a(\omega,v(p))f|x-y|^{-3}\ dp dy\\
	&+
	\iint_{|x-y|\leq t} b(\omega,v(p))(Sf)|x-y|^{-2}\ dp dy
	+
	\iint_{|x-y|\leq t} c(\omega,v(p))(S^2f)|x-y|^{-1}\ dp dy
	\end{align*}
where $S=\partial_t+v(p)\cdot\nabla_x$, $f,Sf,S^2f$ are evaluated at $(t-|x-y|,y,p)$,   $\omega=\frac{x-y}{|x-y|}$  and where the data term includes all terms that depend upon the initial data, and is given by
	\begin{align*}
	\text{data}
	&=
	(\partial_{x_k} E^{*i})_0
	-
	\frac{1}{t^2}\iint_{|x-y|=t}d(\omega,v(p))f_0(y,p)\ dp dS_y
	+
	\frac{1}{t}\iint_{|x-y|=t}e(\omega,v(p))(Sf)(0,y,p)\ dp dS_y.
	\end{align*}
The functions $a,b,c,d,e$ are $C^\infty$ except at $1+v(p)\cdot\omega=0$ and have algebraic singularities at such points. Moreover, $\int_{|\omega|=1}a(\omega,v(p))\ d\omega=0$. 
\end{theorem}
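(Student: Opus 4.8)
Since this is Theorem 5.4.1 of \cite{Glassey}, the plan is to recall how the representation is derived rather than to reproduce the full computation. One begins from Maxwell's equations: differentiating $\partial_t E^* - \nabla\times B^* = -j$ in time, taking the gradient of $\nabla\cdot E^* = \rho$, and eliminating $B^*$ via $\partial_t B^* + \nabla\times E^* = 0$ shows that each component solves the inhomogeneous wave equation $\Box_{t,x} E^{*i} = -\partial_{x_i}\rho - \partial_t j^i$, with Cauchy data determined by $E_0$ and $B_0$. Because $\rho = \int f\,dp$ and $j^i = \int v^i(p) f\,dp$, the source is a momentum integral of the first-order $(t,x)$-derivatives of $f$, and for the field derivative $\partial_{x_k}E^{*i}$ the source becomes a momentum integral of $(\partial_{x_i}+v^i\partial_t)\partial_{x_k}f$. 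Applying the retarded fundamental solution of $\Box$ in three space dimensions produces a ``data'' contribution (the homogeneous Cauchy solution for $\partial_{x_k}E^{*i}$, which after using Maxwell contains only initial data and sphere integrals of $f_0$ and $(Sf)(0,\cdot)$ at $|x-y| = t$) plus a volume term of the form $\iint_{|x-y|\le t}(\partial_{x_i}+v^i\partial_t)\partial_{x_k} f(t-|x-y|,y,p)\,|x-y|^{-1}\,dp\,dy$, up to harmless constants.

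The core of the proof is the Glassey--Strauss change of derivatives adapted to the backward light cone through $(t,x)$. Along this cone the good derivative is $S = \partial_t + v(p)\cdot\nabla_x$, since the Vlasov equation gives $Sf = -K\cdot\nabla_p f$, controlled by the fields rather than their derivatives; the remaining $y$- and retarded-time derivatives acting on $f$ are rewritten as a linear combination, with coefficients smooth in $\omega = (x-y)/|x-y|$ and $v(p)$ away from the resonant set $1+v(p)\cdot\omega = 0$, of $S$ and of derivatives tangential to the spheres $|x-y|=\text{const}$. Tangential derivatives are removed by integration by parts on each sphere, which transfers them onto the smooth kernels and lowers the power of $|x-y|$ by one. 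Carrying this out on both derivatives present yields exactly the three volume terms with kernels $a(\omega,v)f\,|x-y|^{-3}$, $b(\omega,v)(Sf)\,|x-y|^{-2}$, $c(\omega,v)(S^2 f)\,|x-y|^{-1}$, the vertex term $\iint_{|\omega|=1}d(\omega,v)f\,d\omega\,dp$, and the boundary sphere integrals at $|x-y|=t$ that merge into the ``data'' term. Along the way one uses the Vlasov equation and its $S$-derivative to re-express $S^2 f$ in terms of $f$, $\nabla_p f$, the fields, and $\nabla_x$ of the fields, so that no uncontrolled quantity survives.

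Two structural properties then remain. The coefficient functions $a,b,c,d,e$ are smooth except where $1+v(p)\cdot\omega = 0$, where they carry only algebraic singularities coming from the denominators generated by the change-of-derivative identity; on the momentum support of $f$ one has $|v(p)|\le \beta/\sqrt{1+\beta^2}<1$ by Theorem \ref{GS}, so the resonant set is avoided and the singularities are integrable in $\omega$. The vanishing-mean identity $\int_{|\omega|=1} a(\omega,v(p))\,d\omega = 0$ for each fixed $p$ is forced by the origin of the most singular term: $a$ arises as a surface divergence of a smooth tangential vector field on the unit sphere, whose spherical average is zero. This is precisely what allows one to subtract $f$ evaluated at the cone vertex and read the $|x-y|^{-3}$ integral as absolutely convergent, which is how the term is used downstream (e.g.\ in Lemma \ref{Lm}).

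The main obstacle is the bookkeeping of the iterated integration by parts on the spheres: producing the explicit coefficients $a,b,c,d,e$, tracking the boundary contributions at $|x-y|=t$ that feed the ``data'' term, and confirming that the only new singularities introduced are the algebraic ones at $1+v(p)\cdot\omega = 0$ together with the vanishing spherical average of $a$. Everything else is routine, and we therefore simply invoke the cited theorem.
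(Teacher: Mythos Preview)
Your proposal is correct and matches the paper's treatment: the paper does not prove this theorem at all but simply quotes it as Theorem 5.4.1 of \cite{Glassey} and immediately proceeds to apply it to second derivatives. Your sketch of the underlying Glassey--Strauss argument (wave equation for $E^{*i}$, decomposition of $\partial_t,\partial_{x_j}$ into $S$ and tangential derivatives on backward spheres, iterated integration by parts, and the surface-divergence origin of the vanishing mean of $a$) is accurate and in fact more informative than what the paper provides.
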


We are now ready to represent and estimate the second derivatives of $E^*$ in an effort to prove \eqref{K*2}. The second derivatives of $B^*$ follow an analogous argument, which is omitted for brevity. Since the wave operator commutes with spatial derivatives, differentiating the wave equation satisfied by the $i$th component of $E^*$ and applying the Glassey-Strauss representation formula \cite{Glassey, GS} yields the same representation as above, replacing $f$ with $\partial_{x_\ell} f$, as follows (we  suppress the $i$ index to simplify notation)
	\[
	\partial_{x_k x_\ell} E^*
	=
	A_z + A_w + A_{TT} + A_{TS} + A_{ST} + A_{SS}
	\]
where
	\begin{align*}
	 A_{\omega}
	 &=
	\iint_{|\omega|=1} d(\omega,v(p))\partial_{x_\ell} f\ d\omega dp,\\
	 A_{TT}
	 &=
	\iint_{|x-y|\leq t} a(\omega, v(p)) \partial_{x_\ell} f |x-y|^{-3} \ dp dy,\\
	A_{ST} + A_{TS} 
	&=
	\iint_{|x-y|\leq t} b(\omega,v(p))S(\partial_{x_\ell} f)|x-y|^{-2}\ dp dy\\
	&=
	\iint_{|x-y|\leq t} b(\omega,v(p))\partial_{x_\ell} (Sf)|x-y|^{-2}\ dp dy\\
	&=
	\iint_{|x-y|\leq t} \nabla_p b(\omega, v(p)) \cdot \partial_{x_\ell}\left(K  f \right)|x-y|^{-2} \ dp dy\\
	&=
	\iint_{|x-y|\leq t} \nabla_p b(\omega, v(p)) \cdot \left(f\partial_{x_\ell} K  +K\partial_{x_\ell} f \right)|x-y|^{-2} \ dp dy,\\
	A_{SS}
	&=
	\iint_{|x-y|\leq t} c(\omega, v(p)) S^2 (\partial_{x_\ell} f) |x-y|^{-1} \ dp dy,
	\end{align*}
and 
	\begin{align*}
	A_z
	&=
	(\partial_{x_k x_\ell} E^{*i})_0
	-
	\frac{1}{t^2}\iint_{|x-y|=t}d(\omega,v(p))\partial_{x_\ell}f_0(y,p)\ dp dS_y
	+
	\frac{1}{t}\iint_{|x-y|=t}e(\omega,v(p))(S\partial_{x_\ell} f)(0,y,p)\ dp dS_y.
	\end{align*}

Note that the operator $S$ commutes with any spatial derivative, so we are free to replace the order of $S$ and $\partial_{x_\ell}$  within these expressions. Now we wish to estimate these terms one by one to show that $K^*\in\mcX$.

\begin{remark}
Though we suppress the arguments, all quantities involving $f$ and $K$ within the integrals above are evaluated at the point $(t - |x-y|, y, p)$.
\end{remark}

\begin{remark}
In the estimates that follow we will routinely use the following bounds, which hold for all $t$ sufficiently large and $r\leq t+L$, namely
	\begin{equation}\label{eq:useful-t-bounds}
	t^{-1}\leq C_0(1+t)^{-1}\leq C_1(t+r+2L)^{-1}\leq C_2(t-r+2L)^{-1}
	\end{equation}
for some $C_0,C_1,C_2>0$ which do not depend on $t$.
\end{remark}

Throughout, we will also make use of an estimate on $|\rho(t,x)|$ implied by Theorem \ref{GS}, namely
\begin{equation}
\label{rhoest}
|\rho(t,x) | \leq C(t + |x| + 2L)^{-3}
\end{equation}
for all $t \geq 0, x \in \bfR^3$, which follows because $f(t,x,p) = 0$ for $|x| > t+ L$ due to Lemma \ref{LPrelim}.
Finally, we mention that the estimates that follow are not merely an extension of the tools in \cite{GS}, but instead require a precise estimate on the growth of the support of $f$, which is not used in the first derivative estimates of \cite{GS}.

\textbf{1. Estimate of $A_{\omega}$.}

We first use Lemma \ref{Lm} (more specifically, Remark \ref{Lm_rmk}) and the boundedness of $d(\omega,v(p))$ to obtain
	\begin{equation}
	\label{A_omega}
	\left|A_{\omega}\right|
	\leq
	\int_{|\omega|=1}\left|\int d(\omega,v(p))\partial_{x_\ell} f\  dp\right|d\omega
	\lesssim
	\epsilon_0t^{-4}.
	\end{equation}

\textbf{2. Estimate of  $A_{TT}$.} 

Following the strategy of Glassey-Strauss (see, e.g. \cite[pp. 178-179]{Glassey}), we split this term into two parts in order to deal with the singularity as $|x-y| \to 0$.
In particular, we take $t$ sufficiently large and write
	\begin{align*}
	A_{TT}
	 =
	\iint_{|x-y|\leq \frac1t} a(\omega, v(p)) \partial_{x_\ell} f |x-y|^{-3} \ dp dy+\iint_{ \frac1t\leq |x-y|\leq t} a(\omega, v(p)) \partial_{x_\ell} f |x-y|^{-3} \ dp dy
	=:
	I+II.
	\end{align*}
For $I$ we use the fact that $\int_{|\omega|=1} a(\omega,v(p))\ d\omega=0$ to introduce a term that makes no contribution but allows us to write
	\begin{align*}
	I
	&=
	\iint_{|x-y|\leq  \frac1t} a(\omega, v(p)) \partial_{x_\ell} f(t-|x-y|,y,p) |x-y|^{-3} \ dp dy\\
	&=
	\iint_{|x-y|\leq  \frac1t} a(\omega, v(p)) \partial_{x_\ell} \bigg(f(t-|x-y|,y,p)-f(t-|x-y|,x,p) \bigg) |x-y|^{-3}\ dp dy.
	\end{align*}
By the Mean-Value Theorem and Lemma \ref{D2g} (more specifically, Remark \ref{rmk:D2g}), we bound the difference by
$$\frac{\partial_{x_\ell}(f(t-|x-y|,y,p)-f(t-|x-y|, x,p))}{|y-x|} \leq \left \|\nabla^2_xf (t-|x-y|) \right \|_\infty= \left \|\nabla^2_xg (t-|x-y|) \right \|_\infty \lesssim \epsilon_0.$$  
Moreover, the integration in the $p$ variable is only over the set
	\[
	\left\{
	p \in \bfR^3\ : \ |\mcX(0,t-|x-y|,z,p)|\leq L\text{ for all }z\in[x,y]
	\right\}
	\]
where $[x,y]$ is the line segment connecting $x$ and $y$. From Lemma \ref{LDchar} (see also \cite[Lemma 6.3.3]{Glassey}) we know that the diameter of this set is of order $(1+t-|x-y|)^{-1}$. Hence, we find
	\begin{align*}
	|I|
	&\lesssim
	\|\nabla^2_xg\|_\infty\|a\|_\infty\int_{|x-y|\leq  \frac1t}   (1+t-|x-y|)^{-3}|x-y|^{-2}\ dy\\
	&\lesssim
	\epsilon_0t^{-3}\int_{|x-y|\leq  \frac1t}   \left(\frac1t+1-\frac{|x-y|}{t}\right)^{-3}|x-y|^{-2}\ dy.
	\end{align*}
For $|x-y|\leq \frac1t$, we have $ \left(\frac1t+1-\frac{|x-y|}{t}\right)^{-3}\lesssim 1$ and thus conclude the estimate
\begin{equation}
\label{ITT}
	|I| \lesssim
	\epsilon_0t^{-3}\int_{|x-y|\leq \frac1t}   |x-y|^{-2}\ dy\lesssim \epsilon_0 t^{-4}.
\end{equation}

Turning to the estimate of $II$, we use  Lemma \ref{Lm}, \eqref{eq:useful-t-bounds}, and Lemma \ref{LGS_CV} to find
	\begin{align*}
	|II|
	&\leq
	\int_{ \frac1t\leq |x-y|\leq t}\left|\int a(\omega, v(p)) \partial_{x_\ell} f  (t-|x-y|,y,p)  \ dp \right| |x-y|^{-3}\ dy\\
	&\lesssim
	\epsilon_0\int_{ \frac1t\leq |x-y|\leq t} \left(t-|x-y|+|y|+2L\right)^{-4} |x-y|^{-3}\ dy\\
	&\lesssim
	\frac{\epsilon_0}{r}\int_0^{t- \frac1t} \int^{t+r-\tau}_{|t-r-\tau|}\left(\tau+\lambda+2L\right)^{-4} (t-\tau)^{-2}\ \lambda \ d \lambda d\tau
	\end{align*}
where $r = |x|$.
Next, we take $t$ sufficiently large so that $1/t<t/2$  and split the $\tau$-integral into
$$II = \frac{\epsilon_0}{r} \left ( II_A + II_B \right )$$
where
$$II_A = \int_0^{t/2} \int^{t+r-\tau}_{|t-r-\tau|}\left(\tau+\lambda+2L\right)^{-4} (t-\tau)^{-2}\ \lambda \ d \lambda d\tau$$
and
$$II_B = \int_{t/2}^{t-\frac1t} \int^{t+r-\tau}_{|t-r-\tau|}\left(\tau+\lambda+2L\right)^{-4} (t-\tau)^{-2}\ \lambda \ d \lambda d\tau.$$
For brevity, we  rename the lower and upper bounds of the $\lambda$ integral $a$ and $b$, respectively, so that
$$a = |t-r-\tau| \qquad \mathrm{and} \qquad b = t+r-\tau.$$
As $\lambda,\tau,L>0$ we have  $\lambda\left(\tau+\lambda+2L\right)^{-4} \leq \left(\tau+\lambda+2L\right)^{-3}$. Then, using the relationships
\begin{equation}
\label{ab_lambda}
t - r \leq \tau + a \leq \tau + \lambda \leq \tau + b = t +r
\end{equation}
so that $\left(\tau+\lambda+2L\right)^{-1} \leq (t-r+2L)^{-1}$
and performing the integration in $\lambda$, we find
	\begin{align*}
	II_A
	&\leq
	\frac{4}{t^2(t - r + 2L)}\int_0^{t/2} \int^{b}_{a}\left(\tau+\lambda+2L\right)^{-2} \  d \lambda d\tau\\
	&\leq
	\frac{4}{t^2(t - r + 2L)}\int_0^t\frac{b-a}{\left(a + \tau + 2L\right)\left(b + \tau + 2L\right)} \  d \lambda d\tau\\
	&=
	\frac{4}{t^2(t - r + 2L)(t + r + 2L)}\int_0^t\frac{b-a}{a + \tau + 2L} \ d \lambda d\tau.
	\end{align*}
Now, we distinguish between the cases where $t-r-\tau$ changes sign. Define the function
$$\mu(x)=\begin{cases}x, &x\geq0,\\ 0, &x<0.\end{cases}$$ 
We split the remaining integral at $\tau = \mu(t-r)$ and use both $b-a = 2\min\{r, t-\tau\}$ and $\mu(t-r) \geq t-r$ to find
	\begin{align*}
	\int_0^t\frac{b-a}{a + \tau + 2L} \ d \lambda d\tau
	&=
	\int_0^{\mu(t-r)} \frac{2r}{t-r+2L} \   d\tau+\int_{\mu(t-r)}^t\frac{2(t-\tau)}{r-t+2\tau+2L} \   d\tau \\
	&\leq
	2r \frac{\mu(t-r)}{t-r+2L} + \frac{1}{t-r+2L} \int_{\mu(t-r)}^t 2(t-\tau)  d\tau \\
	&\leq
	C \left (r + \frac{r^2}{t-r+2L} \right )  \\
	&\leq
	Cr \left (1 + \frac{t+r+2L}{t-r+2L} \right )  \\
	&\leq
	Cr \left (t+r+2L \right ) \left (t-r+2L \right )^{-1}.
	\end{align*}
Using this within the estimate for $II_A$ then yields
$$II_A \leq \frac{Cr}{t^2(t - r + 2L)^2} \lesssim r(t+r+2L)^{-1} (t - r + 2L)^{-3}.$$

Next, we estimate $II_B$. Because $\tau\geq t/2$, $\lambda\geq0$, and $b^2 - a^2 = 4r(t-\tau)$, we have
	\begin{align*}
	II_B
	&=
	\int_{t/2}^{t- \frac1t} \int^{b}_{a}\left(\tau+\lambda+2L\right)^{-4} (t-\tau)^{-2}\ \lambda \ d \lambda d\tau\\
	&\leq
	\left(t/2+2L\right)^{-4} \int_{t/2}^{t- \frac1t} (t-\tau)^{-2}\int^{b}_{a} \lambda \ d \lambda d\tau\\
	&\leq
	C\left(t+2L\right)^{-4} \int_{t/2}^{t- \frac1t} (t-\tau)^{-2}(b^2-a^2) \  d\tau\\
	&\leq
	Cr\left(t+2L\right)^{-4}\int_{t/2}^{t- \frac1t} (t-\tau)^{-1}  \  d\tau\\
	&\leq
	Cr\left(t+2L\right)^{-4}\ln(t)\\
	&\lesssim
	r\ln(t+r+2L)(t+r+2L)^{-1}(t-r+2L)^{-3}.
	\end{align*}
Combining the estimates for $II_A$ and $II_B$ then provides
	\begin{align*}
	|II|
	&\lesssim
	\frac{\epsilon_0\ln(t+r+2L)}{(t-r+2L)^{3}(t+r+2L)}.
	\end{align*}

Finally, assembling the estimates for $I$ and $II$ within $A_{TT}$, we have
\begin{equation}
\label{A_TT}
	\left|A_{TT}\right|
	\lesssim
	\frac{\epsilon_0}{t^{4}}
	+
	\frac{\epsilon_0\ln(t+r+2L)}{(t-r+2L)^{3}(t+r+2L)}\\
	\lesssim
\epsilon_0\ln(t+r+2L)(t-r+2L)^{-3}(t+r+2L)^{-1}.
\end{equation}

\textbf{3. Estimate of $A_{ST}+A_{TS}$.} 

First, we introduce a cutoff function $\varphi_L=\varphi_L(|y|)$ that is $0$ outside the support of $f (t-|x-y|,y,p)$; that is, $\varphi_L(|y|)=0$ for all $y$ such that $|y|\geq \gamma \left (t-|x-y| \right)+L$ due to Lemma \ref{LPrelim}. Then, we estimate this term as
	\begin{align*}
	A_{ST}+A_{TS}
	=&\
	\iint_{|x-y|\leq t} \nabla_p b(\omega, v(p)) \cdot \left(f\partial_{x_\ell} K  +K\partial_{x_\ell} f \right)|x-y|^{-2} \ dp dy\\
	\leq&\
	\left|\iint_{|x-y|\leq t} \nabla_p b \cdot \left(f\partial_{x_\ell} K   \right)|x-y|^{-2} \ dp dy\right|+\left|\iint_{|x-y|\leq t} \nabla_p b \cdot \left(K\partial_{x_\ell} f \right)|x-y|^{-2} \ dp dy\right|\\
	\leq&\
	\int_{|x-y|\leq t}
	\left|\int
	(\nabla_pb) f (t-|x-y|,y,p)\ dp\right|\left|\nabla_xK(t-|x-y|,y)\right|
	\ |x-y|^{-2}\varphi_L(|y|)\ dy\\
	&+
	\int_{|x-y|\leq t}
	\left|\int
	(\nabla_pb)\partial_{x_\ell} f (t-|x-y|,y,p)\ dp\right|\left|K(t-|x-y|,y)\right|
	\ |x-y|^{-2}\varphi_L(|y|)\ dy\\
	=:&\
	I+II.
	\end{align*}
The two resulting terms are then estimated using \eqref{rhoest} as well as, Lemmas \ref{Lm} (more specifically, Remark \ref{Lm_rmk}) and \ref{LGS_CV} to find
	\begin{align*}
	I
	\lesssim&\
	\epsilon_0 \Vert K \Vert_1\int_{|x-y|\leq t}
	\frac{\ln(t-|x-y|+|y|+2L)}{(t-|x-y|-|y|+2L)^{2}(t-|x-y|+|y|+2L)^{4}|x-y|^{2}}
	\ \varphi_L(|y|)\ dy\\
	\lesssim&\
	\frac{\epsilon_0}{r} \Vert K \Vert_1
	\int_{0}^t\int_a^b
	\frac{\ln(\tau+\lambda+2L)}{(\tau-\lambda+2L)^{2}(\tau+\lambda+2L)^{4}(t-\tau)}\
	\varphi_L(\lambda)\lambda\ d\lambda\ d\tau
	\end{align*}
and
	\begin{align*}
	II
	\lesssim&\
	\epsilon_0 \Vert K \Vert_0 \int_{|x-y|\leq t}
	\frac{1}{(t-|x-y|-|y|+2L)(t-|x-y|+|y|+2L)^{5}|x-y|^{2}}\
	\varphi_L(|y|)\ dy\\
	\lesssim&\
	\frac{\epsilon_0}{r} \Vert K \Vert_0
	\int_0^t\int_{a}^{b}
	\frac{1}{(\tau-\lambda+2L)(\tau+\lambda+2L)^5(t-\tau)}\ 
	 \varphi_L(\lambda)\lambda\ d\lambda\ d\tau\\	
	\lesssim&\
	\frac{\epsilon_0}{r} \Vert K \Vert_0
	\int_0^t\int_{a}^{b}
	\frac{1}{(\tau-\lambda+2L)^2(\tau+\lambda+2L)^4(t-\tau)}\ 
	 \varphi_L(\lambda)\lambda\ d\lambda\ d\tau.
	\end{align*}
Upon reducing the three-dimensional integrals to a two-dimensional representation via Lemma \ref{LGS_CV}, we have $\varphi_L(\lambda)=0$ for all $\lambda \geq \gamma \tau +L$ where $\gamma \in \left ( \frac{1}{2}, 1 \right )$.
Hence, combining these terms yields
\begin{equation}
\label{ASTTS_rep}
|A_{ST}+A_{TS}| \lesssim \frac{\epsilon_0}{r} \Vert K \Vert \mcA(t,r)
\end{equation}
where 
\begin{equation}
\label{A}
\mcA(t,r) := 
	\int_{0}^t\int_a^b
	\frac{\ln(\tau+\lambda+2L)}{(\tau-\lambda+2L)^{2}(\tau+\lambda+2L)^{4}(t-\tau)}\
	\varphi_L(\lambda)\lambda\ d\lambda\ d\tau.
\end{equation}
As we'll see in the next section, this term will appear again in the estimate of $A_{SS}$. To conclude this section, though, we merely estimate $\mcA(t,r)$.
We first use the upper bound on $\tau$ to estimate the logarithmic term and then employ $\tau+\lambda+2L\geq\tau+a+2L\geq t-r+2L$ to find
\begin{equation}
\label{A_rep}
	\mcA(t,r)
	\lesssim
	\frac{\ln(t+r+2L)}{t-r+2L}
	\int_{0}^t\int_a^b
	\frac{1}{(\tau-\lambda+2L)^2(\tau+\lambda+2L)^{3}(t-\tau)}\
	\varphi_L(\lambda) \lambda\ d\lambda\ d\tau.
\end{equation}
With this, we focus on estimating the remaining integral near and far from the singularity at $\tau = t$.
In particular, we break the integral at $\tau = \mu(t-r)$ into
$$ \int_{0}^t\int_a^b \frac{1}{(\tau-\lambda+2L)^2(\tau+\lambda+2L)^{3}(t-\tau)} \varphi_L(\lambda) \lambda\ d\lambda\ d\tau = I_A + I_B$$
where
$$I_A =  \int_{0}^{\mu(t-r)}\int_a^b \frac{1}{(\tau-\lambda+2L)^2(\tau+\lambda+2L)^{3}(t-\tau)} \varphi_L(\lambda) \lambda\ d\lambda\ d\tau$$
and
$$I_B =  \int_{\mu(t-r)}^t\int_a^b \frac{1}{(\tau-\lambda+2L)^2(\tau+\lambda+2L)^{3}(t-\tau)} \varphi_L(\lambda) \lambda\ d\lambda\ d\tau.$$

To estimate $I_A$, we consider $\tau \leq \mu(t-r)$. Then, $\mu(t-r) = 0$ implies $\tau = 0$. Hence, for $\tau \neq 0$ we have $\mu(t-r) = t-r$, and $\tau \leq \mu(t-r) = t-r$ yields $r \leq t-\tau$.
Thus, we conclude
$$\lambda \leq b = r + t - \tau \leq 2(t-\tau).$$
Similarly, as $\mu(t-r) \leq t$ we find
$$ I_A \leq 2 \int_{0}^t\int_a^b \frac{1}{(\tau-\lambda+2L)^2(\tau+\lambda+2L)^{3}} \varphi_L(\lambda)\ d\lambda\ d\tau.$$
Next, we change variables via $\alpha = \tau + \lambda$ and $\beta = \tau - \lambda$, using $\tau + b = t+r$ and $\tau + a \geq |t-r|$, as well as, 
$$\tau - \lambda \geq \tau - (L + \gamma \tau) = (1-\gamma) \tau - L \geq -L, $$ 
to give the correct limits of integration so that
$$ I_A  \lesssim\int_{|t-r|}^{t+r} \frac{d\alpha}{(\alpha+2L)^3} \int_{-L}^\infty \frac{d\beta}{(\beta+2L)^{2}} \lesssim\int_{|t-r|}^{t+r} \frac{d\alpha}{(\alpha+2L)^3}.$$
Finally, we perform the integration in $\alpha$ and use $|t-r| \geq t-r$ to find
\begin{align*}
I_A &\lesssim \left (|t-r| + 2L \right )^{-2} - \left (t+r+ 2L \right )^{-2} \\
&\lesssim r \left (t-r + 2L \right )^{-2} \left (t+r+ 2L \right )^{-2} (t+2L) \\
&\lesssim r \left (t-r + 2L \right )^{-2} \left (t+r+ 2L \right )^{-1}.
\end{align*}

Next, we estimate $I_B$ by considering $\tau \geq \mu(t-r)$. Then, using $\lambda \leq L + \gamma \tau$ and $\frac{1}{1-\gamma} \geq 2$, we find
$$(\tau - \lambda + 2L)^{-2} \leq \left [ (1-\gamma) \tau + L \right ]^{-2} = (1-\gamma)^{-2} \left [\tau + \frac{1}{1-\gamma} L \right ]^{-2} \leq C(\tau + 2L)^{-2}.$$
With this, we further employ the inequalities
$$\lambda \leq \gamma \tau + L \leq \tau + 2L$$
and $(\tau + \lambda + 2L)^{-3} \leq (\tau + 2L)^{-3}$
to yield
\begin{align*}
I_B & \lesssim \int_{\mu(t-r)}^t (\tau + 2L)^{-1} (t-\tau)^{-1} \int_a^b (\tau+\lambda+2L)^{-3} \varphi_L(\lambda)\ d\lambda\ d\tau\\
& \lesssim \int_{\mu(t-r)}^t (\tau + 2L)^{-4} (t-\tau)^{-1} (b-a) \ d\tau\\
& \lesssim \int_{\mu(t-r)}^t (\tau + 2L)^{-4} \ d\tau
\end{align*}
as $b-a = 2\min\{r, t-\tau \} \leq 2(t-\tau)$.
Finally, removing two powers, evaluating the remaining integral, and using $\mu(t-r) \geq t-r$ yields
\begin{align*}
I_B & \lesssim (\mu(t-r) + 2L)^{-2} \int_{\mu(t-r)}^t (\tau + 2L)^{-2} \ d\tau\\
& \lesssim (\mu(t-r) + 2L)^{-2} \left [ (\mu(t-r) + 2L)^{-1}  -  (t + 2L)^{-1}\right ]\\
& = (\mu(t-r) + 2L)^{-3} (t + 2L)^{-1} \left [ t -\mu(t-r) \right ]\\
& \lesssim r( t-r + 2L)^{-3} (t + r + 2L)^{-1}\\
& \lesssim r( t-r+ 2L)^{-2} (t + r + 2L)^{-1}
\end{align*}
as $t -r +2L \geq L$ is implied by the field support condition $r \leq t + L$.

Combining the estimates for $I_A$ and $I_B$ ultimately gives
$$ \int_{0}^t\int_a^b \frac{1}{(\tau-\lambda+2L)^2(\tau+\lambda+2L)^{3}(t-\tau)} \varphi_L(\lambda) \lambda\ d\lambda\ d\tau \lesssim r(t-r + 2L)^{-2} (t + r + 2L)^{-1},$$
and inserting this within \eqref{A_rep} yields
\begin{equation}
\label{A_est}
\mcA(t,r) \lesssim r \ln(t + r + 2L) (t-r + 2L)^{-3} (t + r + 2L)^{-1}.
\end{equation}
Finally, using \eqref{ASTTS_rep}, we obtain
\begin{equation}
\label{A_STTS}
|A_{ST}+A_{TS}| \lesssim \epsilon_0 \Vert K \Vert \ln(t + r + 2L) (t-r + 2L)^{-3} (t + r + 2L)^{-1}.
\end{equation}


\textbf{4. Estimate of $A_{SS}$.}

We follow the strategy of \cite{Glassey} where this term is broken up into several pieces, relying on the following representations of  the expression $S(Kf)$ and the operator $S^2$  (see  \cite[pp. 150-151]{Glassey})
	\begin{align*}
	S(Kf)
	&=
	-K\nabla_p\cdot(Kf)+fSK,\\
	S^2f
	&=
	-\nabla_p\cdot[S(Kf)]+\sum_{j,k=1}^3\frac{\delta_{jk}-v^j(p)\cdot v^k(p)}{p_0}(f\partial_{x_j} K^k+K^k\partial_{x_j} f),\\
	&=
	\nabla_p\cdot\left[K\nabla_p\cdot(Kf)-fSK\right]+\sum_{j,k=1}^3\tilde{c}_{jk}(p)(f\partial_{x_j} K^k+K^k\partial_{x_j} f),
	\end{align*}
where we recall $p_0=\sqrt{1+|p|^2}$ and have denoted $\tilde{c}_{jk}(p):=\frac{\delta_{jk}-v^j(p) v^k(p)}{p_0}$. We therefore have
	\begin{align*}
	A_{SS}
	=&
	\iint_{|x-y|\leq t} c(\omega, v(p)) S^2 (\partial_{x_\ell} f) |x-y|^{-1} \ dp dy
	=
	\iint_{|x-y|\leq t} c(\omega, v(p)) \partial_{x_\ell} (S^2 f) |x-y|^{-1} \ dp dy\\
	=&
	\iint_{|x-y|\leq t} c(\omega, v(p)) \partial_{x_\ell} \left(\nabla_p\cdot[K\nabla_p\cdot(Kf)-fSK]+\sum_{j,k=1}^3\tilde{c}_{jk}(p)(f\partial_{x_j} K^k+K^k\partial_{x_j} f)\right) |x-y|^{-1} \ dp dy\\
	=&
	\iint_{|x-y|\leq t} c(\omega, v(p)) \partial_{x_\ell} \left(\nabla_p\cdot[K\nabla_p\cdot(Kf)]\right) |x-y|^{-1} \ dp dy\\
	&+
	\iint_{|x-y|\leq t} c(\omega, v(p)) \partial_{x_\ell} \left(\nabla_p\cdot[-fSK]\right) |x-y|^{-1} \ dp dy\\
	&+
	\iint_{|x-y|\leq t} c(\omega, v(p)) \partial_{x_\ell} \left(\sum_{j,k=1}^3\tilde{c}_{jk}(p)(f\partial_{x_j} K^k)\right) |x-y|^{-1} \ dp dy\\
	&+
	\iint_{|x-y|\leq t} c(\omega, v(p)) \partial_{x_\ell} \left(\sum_{j,k=1}^3\tilde{c}_{jk}(p)(K^k\partial_{x_j} f)\right) |x-y|^{-1} \ dp dy=:I+II+III+IV.
	\end{align*}

We begin with $I$ and integrate by parts to find
	\begin{align*}
	|I|
	=&\
	\left|\iint_{|x-y|\leq t} c\  \partial_{x_\ell} \left(\nabla_p\cdot[K\nabla_p\cdot(Kf)]\right) |x-y|^{-1} \ dp dy\right|\\
	=&\
	\left|\iint_{|x-y|\leq t} \Big[\nabla_p(\nabla_pc \cdot\left(\partial_{x_\ell} K\right))\cdot(Kf)+\nabla_p(\nabla_pc\cdot K)\cdot \partial_{x_\ell}(Kf) \Big]|x-y|^{-1} \ dp dy\right|\\
	\leq&\
	\left|\iint_{|x-y|\leq t} \Big[\nabla_p(\nabla_pc \cdot\left(\partial_{x_\ell} K\right))\cdot(Kf)\Big]|x-y|^{-1} \ dp dy\right|\\
	&+\left|\iint_{|x-y|\leq t} \Big[\nabla_p(\nabla_pc\cdot K)\cdot \partial_{x_\ell}(Kf) \Big]|x-y|^{-1} \ dp dy\right|=:I_A+I_B.
   	\end{align*}
Estimating $I_A$ and using \eqref{rhoest} and Lemma \ref{LGS_CV} we find
	\begin{align*}
	I_A
	&\leq
	(\|\nabla_pc\|_\infty+\|\nabla_p^2c\|_\infty)\int_{|x-y|\leq t}\left(\int f\ dp\right) |\partial_{x_\ell} K||K|\ |x-y|^{-1}\ dy\\
	&\lesssim
	\frac{\epsilon_0}{r} \left ( \Vert K \Vert_0^2 + \Vert K \Vert_1^2 \right ) \int_0^t\int_{a}^{b}
	\frac{\ln(\tau+\lambda+2L)}{(\tau+\lambda+2L)^{5}(\tau-\lambda+2L)^{3}} \varphi_L(\lambda) \lambda 
	\ d\lambda d\tau
	 \end{align*}
as $\Vert K \Vert_0 \Vert K \Vert_1 \leq \frac{1}{2} \left (\Vert K \Vert_0^2 + \Vert K \Vert_1^2 \right )$.
Similarly, we use \eqref{rhoest} and Lemmas \ref{Lm} and \ref{LGS_CV} to estimate $I_B$ as
	\begin{align*}
	I_B
	&\leq
	(\|\nabla_pc\|_\infty+\|\nabla_p^2c\|_\infty)\int_{|x-y|\leq t}|K|\left[\left(\int f\ dp\right) |\partial_{x_\ell} K|+\left|\int \partial_{x_\ell} f\ dp\right||K| \right]|x-y|^{-1}\ dy\\
	&\lesssim
	\frac{\epsilon_0}{r}  \left (\Vert K \Vert_0^2 + \Vert K \Vert_1^2 \right ) \int_0^t\int_{a}^{b}
	\frac{\lambda}{(\tau+\lambda+2L)^2}
	\left[
	\frac{\ln(\tau+\lambda+2L)}{(\tau+\lambda+2L)^{3}(\tau-\lambda+2L)^{3}} \right . \\
	& \qquad  \left . +\frac{1}{(\tau+\lambda+2L)^{4}(\tau-\lambda+2L)^{2}}
	\right]
	\varphi_L(\lambda) \ d\lambda d\tau\\
	&\lesssim
	\frac{\epsilon_0}{r}  \left (\Vert K \Vert_0^2 + \Vert K \Vert_1^2 \right ) \int_0^t\int_{a}^{b}
	\frac{\ln(\tau+\lambda+2L)}{(\tau+\lambda+2L)^{5}(\tau-\lambda+2L)^{3}}
	\varphi_L(\lambda) \lambda \ d\lambda d\tau.
	\end{align*}
Hence, $I_A$ and $I_B$ satisfy the same estimate, allowing us to conclude
	\begin{align*}
	|I|
	&\lesssim
	\frac{\epsilon_0}{r}  \left (\Vert K \Vert_0^2 + \Vert K \Vert_1^2 \right ) \int_0^t\int_{a}^{b}
	\frac{\ln(\tau+\lambda+2L)}{(\tau+\lambda+2L)^{5}(\tau-\lambda+2L)^{3}}
	\varphi_L(\lambda) \lambda \ d\lambda d\tau,
	\end{align*}
	and thus
$$| I | \lesssim
	\frac{\epsilon_0}{r}  \left (\Vert K \Vert_0^2 + \Vert K \Vert_1^2 \right ) \int_0^t\int_{a}^{b}
	\frac{\ln(\tau+\lambda+2L)}{(\tau+\lambda+2L)^{4}(\tau-\lambda+2L)^{3}}
	\varphi_L(\lambda) \lambda \ d\lambda d\tau.$$
We define the remaining integral on the right side of the above estimate to be
\begin{equation}
\label{B}
\mcB(t,r) := \int_0^t\int_{a}^{b}
	\frac{\ln(\tau+\lambda+2L)}{(\tau+\lambda+2L)^{4}(\tau-\lambda+2L)^{3}}
	\varphi_L(\lambda) \lambda \ d\lambda d\tau
\end{equation}
so that
\begin{equation}
\label{ATTI}
| I | \lesssim \frac{\epsilon_0}{r}  \left (\Vert K \Vert_0^2 + \Vert K \Vert_1^2 \right ) \mcB(t,r). 
\end{equation}

As we shall see below, all other terms $II -IV$ will satisfy the same estimate as $I$, namely they will be bounded above by the right side of \eqref{ATTI}. Once we demonstrate this, we will then bound $\mcB(t,r)$ at the end of this section to complete the estimate for $A_{SS}$. 
We therefore turn our attention to  $II$. We note that this term includes expressions involving the operator $S=\partial_t+v(p)\cdot\nabla_x$ acting on the fields. The second part of this operator, namely the spatial derivative, is straightforward. 
To estimate time derivatives we use Maxwell's equations. Indeed, this yields
$$\Vert \partial_tE(t) \Vert_\infty \lesssim \Vert \nabla_x\times B(t) \Vert_\infty \lesssim \Vert \nabla_x K(t) \Vert_\infty $$ 
and from Lemma \ref{LCurrent}
$$\Vert \partial_tB(t) \Vert_\infty \lesssim \Vert \nabla_x\times E(t) \Vert_\infty+\Vert j(t) \Vert_\infty \lesssim \Vert \nabla_x K(t) \Vert_\infty+t^{-3}$$
with the latter term being higher order.
Using these bounds with \eqref{rhoest} and Lemmas \ref{Lm} and \ref{LGS_CV}, we find
	\begin{align*}
	| II |
	&=
	\left|\iint_{|x-y|\leq t} c(\omega, v(p)) \partial_{x_\ell} \left(\nabla_p\cdot[-fSK]\right) |x-y|^{-1} \ dp dy\right|\\
	&=
	\left|\iint_{|x-y|\leq t} \nabla_pc(\omega, v(p)) \partial_{x_\ell} \left(fSK\right) |x-y|^{-1} \ dp dy\right|\\
	&=
	\left|\iint_{|x-y|\leq t} \nabla_pc(\omega, v(p)) \left((SK)\partial_{x_\ell} f+fS\partial_{x_\ell} K\right) |x-y|^{-1} \ dp dy\right|\\
	&\leq
	\|\nabla_pc\|_\infty\int_{|x-y|\leq t}\left[\left|\int \partial_{x_\ell} f \ dp\right| \Vert \nabla_x K(t) \Vert_\infty +\left(\int f \ dp\right) \Vert \nabla_x^2 K(t) \Vert_\infty \right]|x-y|^{-1} \ dp dy\\
	&\lesssim
	\frac{\epsilon_0}{r} \Vert K \Vert  \int_0^t\int_{a}^{b}
	\left[
	\frac{\ln(\tau+\lambda+2L)}{(\tau+\lambda+2L)^{5}(\tau-\lambda+2L)^{2}}+\frac{\ln(\tau+\lambda+2L)}{(\tau+\lambda+2L)^{4}(\tau-\lambda+2L)^{3}}
	\right]
	\varphi_L(\lambda)\ \lambda\ d\lambda d\tau\\
	& \lesssim \frac{\epsilon_0}{r} \Vert K \Vert \mcB(t,r).
	\end{align*}

We next consider the term $III$, and obtain the same bound. For simplicity we denote $\hat{c}_{jk}(\omega,v(p))=c(\omega,v(p))\tilde{c}_{jk}(p)$.
Using \eqref{rhoest} and Lemmas \ref{Lm} and \ref{LGS_CV} as before, we arrive at
	\begin{align*}
	| III |
	&=
	\left|\iint_{|x-y|\leq t} c(\omega, v(p)) \partial_{x_\ell} \left(\sum_{j,k=1}^3\tilde{c}_{jk}(p)(f\partial_{x_j} K^k)\right) |x-y|^{-1} \ dp dy\right|\\
	&=
	\left|\iint_{|x-y|\leq t} \hat{c}(\omega,v(p)) \sum_{j,k=1}^3\left(\partial_{x_\ell} f\partial_{x_j} K^k+ f\partial_{x_j x_\ell} K^k\right) |x-y|^{-1} \ dp dy\right|\\
	&\leq
	\|\hat{c}\|_{\infty}\int_{|x-y|\leq t}  \sum_{j,k=1}^3\left[\left|\int\partial_{x_\ell} f\ dp\right|\left|\partial_{x_j} K^k\right|+\left( \int f\ dp\right)\left|\partial_{x_j x_\ell} K^k\right|\right] |x-y|^{-1} \ dy\\
	&\lesssim
	\frac{\epsilon_0}{r} \Vert K \Vert  \int_0^t\int_{a}^{b}
	\left[
	\frac{\ln(\tau+\lambda+2L)}{(\tau+\lambda+2L)^{5}(\tau-\lambda+2L)^{2}}+\frac{\ln(\tau+\lambda+2L)}{(\tau+\lambda+2L)^{4}(\tau-\lambda+2L)^{3}}
	\right]
	\varphi_L(\lambda) \lambda\ d\lambda d\tau\\
	&\lesssim \frac{\epsilon_0}{r} \Vert K \Vert
	\mcB(t,r).
	\end{align*}
Lastly, we estimate the term $IV$. With $\hat{c}$ defined as before, we have
	\begin{align*}
	IV
	&=
	\iint_{|x-y|\leq t} c(\omega, v(p)) \partial_{x_\ell} \left(\sum_{j,k=1}^3\tilde{c}_{jk}(p)(K^k\partial_{x_j} f)\right) |x-y|^{-1} \ dp dy\\
	&=
	\iint_{|x-y|\leq t} \hat{c}(\omega,v(p)) \sum_{j,k=1}^3\left(\partial_{x_\ell}  K^k\partial_{x_j}f + K^k\partial_{x_j x_\ell} f\right) |x-y|^{-1} \ dp dy=:IV_A+IV_B.
	\end{align*}

Estimating these two terms separately, we begin with $IV_A$ and use \eqref{rhoest} with Lemmas \ref{Lm} and \ref{LGS_CV} to find
	\begin{align*}
	|IV_A|
	&=
	\left|\iint_{|x-y|\leq t} \hat{c}(\omega,v(p)) \sum_{j,k=1}^3\partial_{x_\ell}  K^k\partial_{x_j}f  |x-y|^{-1} \ dp dy\right|\\
	&\leq
	\|\hat{c}\|_{\infty}\int_{|x-y|\leq t}  \sum_{j,k=1}^3\left|\int\partial_{x_j} f\ dp\right|\left|\partial_{x_\ell} K^k\right| |x-y|^{-1} \ dy\\
	&\lesssim
	\frac{\epsilon_0}{r} \Vert K \Vert_1 \int_0^t\int_{a}^{b}
	\frac{\ln(\tau+\lambda+2L)}{(\tau+\lambda+2L)^{5}(\tau-\lambda+2L)^{2}}
	\varphi_L(\lambda) \lambda \ d\lambda d\tau\\
	&\lesssim \frac{\epsilon_0}{r} \Vert K \Vert
	\mcB(t,r)
	\end{align*}
which, again, is the expected bound. 

Turning to $IV_B$, we face the difficulty of having to deal with two derivatives of $f$. This is overcome by expressing one of those derivatives in terms of the operators $S$ and $T_j$ (see \cite[p. 141]{Glassey}), with the portion that involves the operator $S$ being integrated by parts in $p$ so that
	\begin{align*}
	\int \partial_{x_j}f(t-|x-y|,y,p)\ dp
	&=
	\int \left[\frac{\omega_j S}{1+v\cdot\omega}+\sum_{i=1}^3\left(\delta_{ji}-\frac{\omega_j v_i}{1+v\cdot\omega}\right)T_i\right]f(t-|x-y|,y,p)\ dp\\
	&=
	\int \left[-\frac{\omega_j \nabla_p\cdot(Kf)}{1+v\cdot\omega}+\sum_{i=1}^3\left(\delta_{ji}-\frac{\omega_j v_i}{1+v\cdot\omega}\right)T_if(t-|x-y|,y,p)\right] dp.
	\end{align*}
Inserting this into $IV_B$, we integrate by parts in the $T_i$ derivative as outlined in \cite[p. 181]{Glassey}, which gives
	\begin{align*}
	|IV_B|
	=&\
	\Bigg|\iint_{|x-y|\leq t} \hat{c}(\omega,v(p)) \sum_{j,k=1}^3K^k\partial_{x_\ell}  \Bigg[-\frac{\omega_j \nabla_p\cdot(Kf)}{1+v\cdot\omega}+\sum_{i=1}^3\left(\delta_{ji}-\frac{\omega_j v_i}{1+v\cdot\omega}\right)T_if\Bigg]  |x-y|^{-1} \ dp dy\Bigg|\\
	\leq&\
	\Bigg|\iint_{|x-y|\leq t} \hat{c}(\omega,v(p)) \sum_{j,k=1}^3K^k  \frac{\omega_j\partial_{x_\ell} \nabla_p\cdot(Kf)}{1+v\cdot\omega}  |x-y|^{-1} \ dp dy\Bigg|\\
	&+
	\Bigg|\iint_{|x-y|\leq t} \hat{c}(\omega,v(p)) \sum_{i,j,k=1}^3K^k \left(\delta_{ji}-\frac{\omega_j v_i}{1+v\cdot\omega}\right)T_i\partial_{x_\ell}f |x-y|^{-1} \ dp dy\Bigg|\\
	\leq&\
	C\|\hat{c}\|_\infty\int_{|x-y|\leq t}\left(|K||\partial_{x_\ell} K|\int f\ dp+|K|^2\left|\int\partial_{x_\ell} f\ dp\right|\right)  |x-y|^{-1} \  dy\\
	&+
	C\|\hat{c}\|_\infty\left(\int_{|x-y|\leq t}\left|\int\partial_{x_\ell} f\ dp\right||\nabla_xK||x-y|^{-1} \ dy+\int_{|x-y|\leq t}\left|\int\partial_{x_\ell} f\ dp\right| |K| |x-y|^{-2} \ dy\right)\\
	&+
	C\|\hat{c}\|_\infty t^{-1}\iint_{|x-y|=t}\frac{|\omega|}{|1+v\cdot\omega|}|K(0,y)||\partial_{x_\ell} f_0(y,p)|\ dpdS_y.
	\end{align*}
The last term, which we denote $A_z'$, is estimated separately, along with the $A_z$ term, which is of a similar nature, while the first two terms are bounded using \eqref{rhoest} and Lemmas \ref{Lm} and \ref{LGS_CV} by
	\begin{align*}
	&\frac{\epsilon_0}{r} \left ( \Vert K \Vert + \Vert K \Vert_0^2 + \Vert K \Vert_1^2\right )
	\int_0^t\int_{a}^{b}
	\left[
	\frac{\ln(\tau+\lambda+2L)}{(\tau+\lambda+2L)^{5}(\tau-\lambda+2L)^{3}}+\frac{1}{(\tau+\lambda+2L)^{6}(\tau-\lambda+2L)^{2}(t-\tau)}
	\right]
	\varphi_L(\lambda) \lambda\ d\lambda d\tau.
	\end{align*}
The remaining integrals are bounded by the sum $\mcA(t,r) + \mcB(t,r)$ where $\mcA$ is defined by \eqref{A} and $\mcB$ is defined by \eqref{B}. 
Assembling the estimates for $I$, $II$, $III$, $IV_A$ and $IV_B$, we therefore find
\begin{equation}
\label{A_SS_rep}
|A_{SS}| \lesssim \frac{\epsilon_0}{r}  \left ( \Vert K \Vert + \Vert K \Vert_0^2 + \Vert K \Vert_1^2\right )\left ( \mcA(t,r) + \mcB(t,r) \right ).
\end{equation}

To conclude this section we focus on estimating the remaining double integral within $\mcB(t,r)$ given by \eqref{B}.
First, we use $\tau + \lambda \leq \tau + b = t + r$ to bound the logarithmic term so that
$$ \mcB(t,r)  \lesssim \ln(t+r+2L) \int_0^t\int_{a}^{b}
(\tau+\lambda+2L)^{-4}(\tau-\lambda+2L)^{-3}	
\varphi_L(\lambda) \lambda \ d\lambda d\tau.$$
Due to the bound on the support of $f$, we recall $\varphi_L(\lambda) = 0$ unless $\lambda \leq \gamma \tau + L$.
Hence, we find
$$\lambda \leq \gamma \tau + L \leq \tau + 2L$$
and
$$(\tau - \lambda + 2L)^{-3} \leq \left [ (1-\gamma) \tau + L \right ]^{-3} = (1-\gamma)^{-3} \left [\tau + \frac{1}{1-\gamma} L \right ]^{-3} \leq C(\tau + 2L)^{-3}.$$
Using these inequalities within the double integral gives
$$ \mcB(t,r)  \lesssim \ln(t+r+2L) \int_0^t\int_{a}^{b} (\tau+\lambda+2L)^{-4} (\tau + 2L)^{-2} \varphi_L(\lambda) \ d\lambda d\tau.$$
Next, we change variables via $\alpha = \tau + \lambda$ and $\beta = \tau$, using $\tau + b = t+r$ and $\tau + a  \geq |t-r|$ to change the limits of integration so that
\begin{align*}
\int_0^t\int_{a}^{b} (\tau+\lambda+2L)^{-4} (\tau + 2L)^{-2} \varphi_L(\lambda) \ d\lambda d\tau & \lesssim \int_{|t-r|}^{t+r} \frac{d\alpha}{(\alpha+2L)^4} \int_{0}^\infty \frac{d\beta}{(\beta+2L)^{2}}\\
& \lesssim (|t-r| + 2L)^{-1}\int_{|t-r|}^{t+r} \frac{d\alpha}{(\alpha+2L)^3}.
\end{align*}
Finally, we perform the integration and use $|t-r| \geq t-r$ to find
\begin{align*}
\int_0^t\int_{a}^{b} (\tau+\lambda+2L)^{-4} (\tau + 2L)^{-2} \varphi_L(\lambda) \ d\lambda d\tau  &\lesssim (|t-r| + 2L)^{-1}\left [ \left (|t-r| + 2L \right )^{-2} - \left (t+r+ 2L \right )^{-2} \right ] \\
&\lesssim r \left (t-r + 2L \right )^{-3} \left (t+r+ 2L \right )^{-2} (t+2L) \\
&\lesssim r \left (t-r + 2L \right )^{-3} \left (t+r+ 2L \right )^{-1}.
\end{align*}
Incorporating this estimate in the above inequality for $\mcB(t,r)$, we have shown
\begin{equation}
\label{B_est}
 \mcB(t,r)  \lesssim r \ln(t+r+2L)  \left (t-r + 2L \right )^{-3} \left (t+r+ 2L \right )^{-1}.
\end{equation}
Inserting \eqref{A_est} and \eqref{B_est} within \eqref{A_SS_rep}, we ultimately conclude
\begin{equation}
\label{A_SS}
|A_{SS}| \lesssim
\epsilon_0  \left ( \Vert K \Vert + \Vert K \Vert_0^2 + \Vert K \Vert_1^2\right ) \ln(t + r + 2L) (t-r + 2L)^{-3} (t + r + 2L)^{-1}.
\end{equation}

\textbf{5. Estimate of $A_{z}$ and $A'_{z}$.}

Finally, we estimate terms involving initial data, namely
        \begin{align*}
        A_z
        =&\
        (\partial_{x_k x_\ell} E^{*i})_0
        -
        \frac{1}{t^2}\iint_{|x-y|=t}d(\omega,v(p))\partial_{x_\ell}f_0(y,p)\ dp dS_y
        +
        \frac{1}{t}\iint_{|x-y|=t}e(\omega,v(p))(S\partial_{x_\ell} f)(0,y,p)\ dp dS_y
        \end{align*}
and
        \begin{align*}
        A_z'
        =
        C\|\hat{c}\|_\infty \frac1t\iint_{|x-y|=t}\frac{|\omega|}{|1+v\cdot\omega|}|K(0,y)||\partial_{x_\ell} f_0(y,p)|\ dpdS_y.
        \end{align*}

We start with the term $A_z$, which has one additional derivative than the comparable term appearing in \cite[eq. (6.33)]{Glassey}. The first term within $A_z$ is traced back to  \cite[eq. (6.27)-(6.28)]{Glassey} and is given by
        \begin{align*}
        (\partial_{x_k x_\ell}E^{*i})_0
        =&\
        \frac{1}{4\pi t^2}\int_{|x-y|=t}[\partial_{x_k x_\ell}E_0(y)+((y-x)\cdot\nabla)\partial_{x_k x_\ell}E_0(y)+t\nabla\times \partial_{x_k x_\ell}B_0(y)]\ dS_y\\
        &-
        \frac{1}{4\pi t}\iint_{|x-y|=t}v(p)\partial_{x_k x_\ell} f_0(y,p)\ dpdS_y\\
        &-
        \frac{1}{t}\iint_{|x-y|=t}\frac{\omega-(v\cdot\omega)v}{1+v\cdot\omega}\partial_{x_k x_\ell}f_0(y,p)\ dp dS_y
        \end{align*}

In evaluating the various terms we  use the fact that the initial data is supported in $\{|y|\leq L\}$ so that integrals over $|x-y|= t$ have support in $\{|t-r|\leq L\}=\{L\leq t-r+2L\leq 3L\}$. Points in this set satisfy, in particular, $3L(t-r+2L)^{-1}\geq 1$. Hence,  when bounding any of these integrals, we may use the fact that the support decays faster than $(t-r+2L)^{-N}$ for any power $N\in\N$. We shall take $N=3$ which is enough for our purposes. Together with the bound $t^{-1}\lesssim (t+r+2L)^{-1}$ from \eqref{eq:useful-t-bounds}, we conclude that  $A_z$ and $A_z'$ satisfy
        \begin{equation}
        \label{A_z}
        |A_z|+|A_z'|
        \lesssim
        \epsilon_0(t+r+2L)^{-1}(t-r+2L)^{-3},
        \end{equation}
where we use the fact that $S$ and $\partial_{x_\ell}$ commute and $Sf=-K\cdot\nabla_pf$ in the last term of $A_z$.

Combining all estimates, namely \eqref{A_omega}, \eqref{A_TT}, \eqref{A_STTS}, \eqref{A_SS}, and \eqref{A_z}, we ultimately find
$$| \partial_{x_k x_\ell} E^*(t,x)| \lesssim \epsilon_0\ln(t+r+2L) (t+r+2L)^{-1}(t-r+2L)^{-3} \left (1+ \Vert K \Vert + \Vert K \Vert_0^2 + \Vert K \Vert_1^2\right ).$$
For $t$ bounded the same inequality holds as
$$t + r + 2L \leq 2t + 3L \leq C,$$
which allows us to multiply by negative powers of $t-r+2L$ and $t + r + 2L$.
Similarly, we have
$$\ln(t + r + 2L ) \geq \ln(2L),$$
which allows us to multiply by the logarithmic term wherever necessary, as well.
Assembling large and small time estimates then gives
$$| \partial_{x_k x_\ell} E^*(t,x)| \leq C\epsilon_0\ln(t+r+2L) (t+r+2L)^{-1}(t-r+2L)^{-3} \left (1 + \Vert K \Vert + \Vert K \Vert_0^2 + \Vert K \Vert_1^2\right )$$
for all $t \geq 0$,
with an analogous estimate for second derivatives of $B$
so that
$$\Vert K^* \Vert_2 \leq C \epsilon_0 \left (1 + \Vert K \Vert + \Vert K \Vert_0^2 + \Vert K \Vert_1^2\right ),$$
which gives \eqref{K*2}.
With this estimate established, the standard iteration scheme, as well as the associated first and second derivatives, converges to a solution $(f,K)$ of \eqref{RVM} for $\epsilon, \epsilon_0$ sufficiently small. 
This completes the proof of Theorem \ref{T1} with the corresponding solution satisfying the estimates
$$\mcK_0(t) \lesssim t^{-2}, \qquad \mcK_1(t) \lesssim t^{-3}\ln(t) , \qquad \mathrm{and} \qquad \mcK_2(t) \lesssim t^{-4} \ln(t),$$ 
and hence all of the estimates in Sections $2-4$ are valid for the resulting solution.
\end{proof}

Finally, we prove the large time behavior theorems for arbitrary species by using the previous lemmas and applying them to each species as necessary.

\begin{proof}[Proof of Theorems \ref{T1} and \ref{T2}]
The first result follows by merely collecting the estimates of Sections $2-4$.
Indeed, as our assumptions on $(f^\alpha_0,E_0,B_0)$ satisfy the conditions of the global-in-time small data theorem (Theorem \ref{T0}), the initial data generate smooth solutions, and we conclude the field decay estimates
$$\mcK^\alpha_0(t) \lesssim t^{-2} \qquad \mathrm{and} \qquad \mcK^\alpha_1(t) \lesssim t^{-3}\ln(t)$$
for each $\alpha = 1, ...N$.
Then, we invoke Lemmas \ref{Xsupp} and \ref{Dvg} for every $\alpha = 1, ...N$ to produce the logarithmic growth of the spatial support of $g^\alpha$ and its momentum derivatives, namely
$$\mcR^\alpha(t) \lesssim \ln(t) \qquad \mathrm{and} \qquad \mcG^\alpha_1(t) \lesssim \ln^2(t).$$
Furthermore, Theorem \ref{T0} and Lemma \ref{D2g} guarantee that each $f^\alpha$ is $C^2$ and the resulting fields are $C^2$, as well, with the corresponding estimates
$$\mcK^\alpha_2(t) \lesssim t^{-4}\ln(t) \qquad \mathrm{and} \qquad \mcG^\alpha_2(t) \lesssim \ln^4(t).$$
With this, the spatial average $F^\alpha(t,p)$ of each species and its first derivatives converge with the estimates provided in Lemmas \ref{Funif} and \ref{DFunif}.
%
With the convergence of the spatial average, Lemmas \ref{LDensity} and \ref{LCurrent} provide the asymptotic behavior of the charge and current densities, while Lemmas \ref{LDensityderivative} and \ref{LCurrentderivative} yield the stated estimates on derivatives of the densities.
Lemma \ref{LField} then gives a refined estimate of the behavior of the fields and the Lorentz force of each species as $t \to \infty$.
The modified scattering result is provided by Lemma \ref{LModScattering} for each $\alpha = 1, ..., N$, which gives the stated convergence estimate and completes the proof of Theorem \ref{T1}.
Furthermore, if \alt{$\rho_\infty \not\equiv 0$ 
then} the provided estimates are sharp up to logarithmic corrections in the convergence rates.

Next, we consider \alt{$\rho_\infty \equiv 0$}. This immediately implies $j_\infty \equiv 0$, as well. Hence, the charge and current density estimates of Lemmas \ref{LDensity} and \ref{LCurrent}  yield
$$ \Vert \rho(t) \Vert_\infty + \Vert j(t) \Vert_\infty  \lesssim t^{-4} \ln^6(t).$$
Of course, as the limiting densities vanish, so too do their derivatives; thus, 
$\nabla_q\rho_\infty \equiv 0$ and $\nabla_q j_\infty \equiv 0$.
Hence, Lemmas \ref{LDensityderivative} and \ref{LCurrentderivative} give
$$ \Vert \nabla_x \rho(t) \Vert_\infty + \Vert \nabla_x j(t) \Vert_\infty + \Vert \partial_t j(t) \Vert_\infty \lesssim t^{-5}\ln^8(t).$$
Within Lemma \ref{LWave}, taking $\eta_\infty \equiv 0$ yields $\psi_\infty \equiv 0$.
Hence, we find $E_\infty \equiv B_\infty \equiv 0$.
Lemma \ref{LField} then gives
$$\sup_{|x| \lesssim \gamma t} \bigg (|E(t,x)| + |B(t,x)| \bigg ) \lesssim t^{-3} \ln^8(t),$$
and $\mcK^\alpha_0(t) \lesssim t^{-3}\ln^8(t)$ for all $\alpha = 1, ..., N$.
Lemma \ref{L6} yields the improved decay estimate
$$|\mcP^\alpha(t, \tau, x, p) - \mcP^\alpha_\infty(\tau, x, p) | \lesssim t^{-2} \ln^8(t)$$
for every $\alpha = 1, ..., N$,
which provides the stated decay rates.
We note that Lemma \ref{Xsupp} further yields
$$ \sup_{(x,p) \in \mcS_g^\alpha(0)} |\mcY^\alpha(t,0,x,p)| + \mcR^\alpha(t)  \lesssim 1.$$
Hence, the modification to the spatial trajectories is no longer needed in order for the particle distribution to scatter to its ``free-streaming'' profile.
However, to precisely establish such a claim, a refined estimate of field derivatives is needed (see Remark \ref{rem:1}).

%
%
%
%

\end{proof}

\section{\alt{The Non-Relativistic Vlasov-Maxwell System}}
\label{sec:other-models}
\alt{
As noted in \cite[Theorem 4]{GS}, small data results of this variety for the \emph{relativistic} Vlasov-Maxwell system are also applicable to the \emph{non-relativistic} Vlasov-Maxwell (VM) system where the relativistic velocity ${v}_\alpha(p)= \frac{p}{\sqrt{m_\alpha^2 + |p|^2/c^2}}$ is replaced by the classical velocity ${v}_\alpha(p)=p/m_\alpha$ in \eqref{Vlasov}, \eqref{Rel_vel} and \eqref{rhoj}. Essentially, one only needs to pay attention to the velocity support of the initial particle configuration to ensure that there are no particles traveling near the speed of light. The analogue of our main theorems to (VM) can then be stated in a similar manner. As a reminder to the reader, we have taken the speed of light $c=1$ within the statements of our main theorems and will do so below, as well. Hence, the constraint $L_p < 1$ and the bound $\beta < 1$, which appear in Theorem \ref{VM}, merely arise from the normalization of $c$.
\begin{theorem}
\label{VM}
For any $L_x > 0$ and $0<L_p<1$ there exist constants $\epsilon_0 > 0$ and $\beta \in (0,1)$ with the following property.
Let $f^\alpha_0 \in C^2$, $\alpha=1,\dots,N$, be non-negative functions supported on $\overline{\Gamma}_{L_x} \times \overline{\Gamma}_{L_p}$.
Let $E_0, B_0 \in C^3$ be supported on $\overline{\Gamma}_{L_x}$ satisfying the compatibility conditions \eqref{compat}.
If the initial data satisfy
$$\sum_{\alpha = 1}^N \Vert f^\alpha_0 \Vert_{C^2} + \Vert E_0 \Vert_{C^3} + \Vert B_0 \Vert_{C^3} \leq \epsilon_0,$$
then there exists a unique classical solution of (VM) for all $x,p\in\R^3$ and $t\geq0$ with the given initial data such that
$$f^\alpha(t,x,p) = 0 \qquad \mathrm{for} \qquad |p| \geq \beta$$
for all $\alpha = 1, ..., N$, $t\geq 0$, and $x \in \bfR^3$. This solution satisfies all the estimates appearing in Theorems \ref{T0}, \ref{T1} and \ref{T2} with the following modifications:
\begin{enumerate}
	\item
	 The domain of $v_\alpha^{-1}(q) = m_\alpha q$ is $q\in\R^3$, rather than $q \in \Gamma_1$. Subsequently, this is also the domain of $\rho_\infty$, $j_\infty$, $E_\infty$, and $B_\infty$.
	\item
	$\mathbb{A}_\alpha(p)=\nabla v_\alpha(p)$ is a constant matrix  defined by $\mathbb{A}_\alpha(p) = m_\alpha^{-1} \mathbb{I}$ so that $\mcD_\alpha(p)=m_\alpha^3$.
	\item
	$\mathbb{B}_\alpha(q)=\nabla v_\alpha^{-1}(q)$ is a constant matrix defined by $\mathbb{B}_\alpha(q) = m_\alpha \mathbb{I}$.
	\item
	$L$ is replaced by $L_x$ in the estimates appearing in Theorem \ref{T0}.
\end{enumerate}
\end{theorem}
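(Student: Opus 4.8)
The plan is to check that the development of Sections~\ref{sec:prelim}--\ref{sect:fieldconv} and the iteration of Section~\ref{sec:thmpfs} survive the substitution $v_\alpha(p)=p/m_\alpha$, $\mathbb{A}_\alpha=m_\alpha^{-1}\mathbb{I}$, $\mathbb{B}_\alpha=m_\alpha\mathbb{I}$, $\mcD_\alpha=m_\alpha^3$, and then to collect the resulting estimates. The starting point is the non-relativistic small data theorem recorded in \cite[Theorem~4]{GS}: since the initial momentum support lies in $\overline{\Gamma}_{L_p}$, one obtains a unique $C^1$ solution of (VM) whose momentum support stays inside $\{|p|\le\beta\}$ for some $\beta$, and one fixes $\beta$ (using $L_p<1$ together with a sufficiently small $\epsilon_0$) so that the velocity support is strictly inside the unit ball, i.e.\ $\zeta:=\max_\alpha(\beta/m_\alpha)<1$. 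This is the one step where compact momentum support is genuinely indispensable: in the relativistic system $|v_\alpha(p)|<1$ holds for every $p$, whereas in (VM) it is exactly the bound $\zeta<1$ that keeps every characteristic strictly inside the light cone and so preserves the dispersive decay of the fields on the support of $f^\alpha$.

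First I would revisit Lemma~\ref{lem:v}. Because $v_\alpha$ is now linear, $\mathbb{A}_\alpha$, $\mathbb{B}_\alpha$ and $\mcD_\alpha$ are constant, the identity $\mathbb{B}_\alpha\mathbb{A}_\alpha=\mathbb{I}$ is immediate, and $v_\alpha$, $v_\alpha^{-1}$, $\mcD_\alpha$ and all their derivatives are trivially $C^\infty$ and bounded on the (now compact, since $\zeta<1$) sets where they are evaluated; this is a genuine simplification, as every term containing $\nabla_p\mathbb{A}_{jk}$ or a second derivative of $v_\alpha$ in the commutator identities of Lemma~\ref{D2g} and in the density-derivative lemmas simply vanishes. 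Next I would re-examine Lemma~\ref{LPrelim}: one fixes $\gamma\in(\zeta,1)$ with $\gamma\ge\frac{1}{2}$ (possible since $\zeta<1$, for instance $\gamma=\frac{1+\zeta}{2}$), so that $0<\zeta<\gamma<1$, and since \cite[Theorem~4]{GS} supplies the same field and characteristic bounds as Theorem~\ref{GS}, one recovers $\mcK_0(t)\lesssim t^{-2}$ and $\mcK_1(t)\lesssim t^{-3}\ln t$ verbatim. Lemma~\ref{LH} is a pure change-of-variables estimate, and one only notes that $v_\alpha^{-1}(q)=m_\alpha q$ is now defined and smooth on all of $\R^3$.

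With the preliminaries in place, the lemmas of Sections~\ref{sec:lower}--\ref{sect:fieldconv} go through line by line, since each uses only boundedness of $v_\alpha$, $\mathbb{A}_\alpha$, $\mathbb{B}_\alpha$, $\mcD_\alpha$ and their derivatives together with the field decay from \cite[Theorem~4]{GS}: the characteristic bounds (Lemmas~\ref{LDchar},~\ref{L6},~\ref{Vinfinvert}), the translated-distribution estimates (Lemmas~\ref{Xsupp},~\ref{Dvg},~\ref{D2g}), the spatial-average convergence (Lemmas~\ref{Funif},~\ref{DFunif}), the density and density-derivative asymptotics (Lemmas~\ref{LDensity},~\ref{LCurrent},~\ref{LDensityderivative},~\ref{LCurrentderivative}), the kernel estimate (Lemma~\ref{Lm}), the wave-equation and field limits (Lemmas~\ref{LWave},~\ref{LField}), and the modified scattering statement (Lemma~\ref{LModScattering}). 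In the limiting densities one uses $v_\alpha^{-1}(q)=m_\alpha q$ on all of $\R^3$ and observes that compact support of each $F^\alpha_\infty$ (contained in $\Omega_p^\alpha\subset\overline{\Gamma}_\beta$) still forces $\rho_\infty$, $j_\infty$, $E_\infty$, $B_\infty$ to be compactly supported; the elliptic boundary-value problems for $E^i_\infty$, $B^i_\infty$ are posed on $\Gamma_\gamma$ exactly as before and stay uniformly elliptic because $\gamma<1$. For the small data theorem one invokes the original non-relativistic Glassey-Strauss representation of the fields and their first derivatives \cite{Glassey}, differentiates once more, and reproduces the estimates of $A_\omega$, $A_{TT}$, $A_{ST}+A_{TS}$, $A_{SS}$, $A_z$, $A_z'$ verbatim; the kernels $a,b,c,d,e$ keep their structure (smooth away from $1+v_\alpha(p)\cdot\omega=0$, with $\int_{|\omega|=1}a\,d\omega=0$), $\gamma\ge\frac{1}{2}$ is again available for the $A_{ST}+A_{TS}$ and $A_{SS}$ bounds, and Remarks~\ref{rmk:D2g} and~\ref{Lm_rmk} apply unchanged to the iterates, so the iteration closes for $\epsilon,\epsilon_0$ small.

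I do not expect a serious obstacle: once compact momentum support is imposed, (VM) is strictly easier than the relativistic system. The only genuine point of care — and the thing to record explicitly — is the bookkeeping of the velocity support, namely fixing $\beta$ (hence $\epsilon_0$, and if necessary $L_p$) so that $\zeta=\max_\alpha(\beta/m_\alpha)<1$; this guarantees that the supports of all $f^\alpha$, and hence of $\rho$, $j$ and their derivatives, remain inside a cone $\{|x|\le\zeta t+L_x\}$ with $\zeta<1$, after which the machinery of Sections~\ref{sec:prelim}--\ref{sec:thmpfs} yields all the estimates of Theorems~\ref{T0},~\ref{T1} and~\ref{T2} with $L$ replaced by $L_x$ and the modifications (1)--(4).
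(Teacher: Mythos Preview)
Your proposal is correct and follows essentially the same approach as the paper: both invoke the non-relativistic small data result of \cite[Section~7]{GS} (their Theorem~4) as the replacement for Theorem~\ref{GS}, observe that the linearity of $v_\alpha(p)=p/m_\alpha$ trivializes Lemma~\ref{lem:v} and every occurrence of $\nabla_p\mathbb{A}$, $\nabla v^{-1}$, etc., and then note that the remaining lemmas and the iteration of Section~\ref{sec:thmpfs} carry over unchanged. The paper's own argument is in fact much terser than yours (it is labeled ``Remarks on the Proof'' and occupies only a few sentences), so your more explicit bookkeeping --- in particular isolating the condition $\zeta=\max_\alpha(\beta/m_\alpha)<1$ as the one genuinely new constraint and spelling out the choice of $\gamma\in(\zeta,1)$ with $\gamma\ge\tfrac12$ --- is a welcome elaboration rather than a deviation.
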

}
\alt{
\begin{proof}[Remarks on the  Proof]
In \cite[Section 7]{GS}, the authors sketch the argument for the extension of Theorem \ref{GS} to the non-relativistic system. Those same ideas immediately apply to the proof of Theorem \ref{T0}; hence, they are not repeated here. Nearly all of the additional elements of the proofs remain unchanged, so we merely point out a few minor differences, upon dropping the multiple species notation as before. First, as $v(p)$ is linear, Lemma \ref{lem:v} becomes essentially trivial. This is also the case in all instances where $\nabla v$ and $\nabla v^{-1}$ appear, such as in the proof of Lemma \ref{LH}.
Additionally, within Lemma  \ref{LPrelim}, we can merely take $\zeta=\beta$.
\end{proof}
}


\end{document}